\documentclass[11pt]{amsart}
\hoffset         -0.5in
\voffset          -0.3in
\textwidth       5.9in
\textheight      8.8in

\usepackage{amsmath,amsfonts,amssymb,amsthm}
\usepackage{latexsym,bm,graphicx}
\usepackage{mathrsfs}
\usepackage{color}
\usepackage{hyperref}
\usepackage{xypic}

\title[The moduli space of metrics with positive isotropic curvature]{Path-connectedness of the moduli spaces of metrics with positive isotropic curvature on four-manifolds}

\author{Bing-Long Chen}
\address{School of Mathematics and Computational science\\  Sun Yat-sen University\\ Guangzhou 510275\\ E-mail address: mcscbl@mail.sysu.edu.cn}

\author{Xian-Tao Huang}
\address{School of Mathematics and Computational science\\  Sun Yat-sen University\\ Guangzhou 510275\\ E-mail address: hxiant@mail2.sysu.edu.cn}

\newtheorem{thm}{Theorem}[section]
\newtheorem{prop}[thm]{Proposition}
\newtheorem{lem}[thm]{Lemma}

\newtheorem{cor}[thm]{Corollary}

\theoremstyle{definition}
\theoremstyle{remark}

\newtheorem{rem}[thm]{Remark}

\numberwithin{equation}{section}

\newcommand{\R}{\mathbb{R}}

\begin{document}

\begin{abstract}In this paper we prove the path connectedness of the moduli spaces of metrics with positive isotropic curvature on certain compact four-dimensional manifolds.

\vspace*{5pt}
\noindent {\it 2010 Mathematics Subject Classification}: 53C21, 53C80.

\vspace*{5pt}
\noindent{\it Keywords}: Four-manifolds, positive isotropic curvature, Ricci flow with surgery.

\end{abstract}

\maketitle

\section{Introduction}

Let $M$ be a compact $n$-dimensional smooth manifold. In Riemannian geometry, whether $M$ admits a metric with certain curvature restriction is a fundamental problem. These curvature conditions may include  positive scalar curvature, positive Ricci curvature, positive or negative sectional curvature, etc. Suppose $M$ admits such a metric, people are also interested in the topology of the spaces of all such Riemannian metrics on $M$.

Denote the set of Riemannian metrics $g$ with positive scalar curvature $R_{g}$ by $\mathcal{R}_{+}(M)$. The group of diffeomorphisms on $M$, denoted by $\textmd{Diff}(M)$, acts on $\mathcal{R}_{+}(M)$ naturally.  In 1916,
Weyl \cite{Weyl} proved that $\mathcal{R}_{+}(S^{2})$ is path-connected. Rosenberg and Stolz \cite{RosenSto} further showed  that  $\mathcal{R}_{+}(S^{2})$ is contractible.

When dimension $n\geq 7,$ there are many examples with disconnected $\mathcal{R}_{+}(M^{n})$ or even the moduli spaces $\mathcal{R}_{+}(M^{n})/\textmd{Diff}(M^{n})$, see \cite{Hit}, \cite{GL1}, \cite{Carr}, \cite{KS}, \cite{Rosen} etc.
However, when dimension $n=3$, Marques \cite{Mar} proved recently  that  the moduli space $\mathcal{R}_{+}(M)/\textmd{Diff}(M)$ is path-connected if $M$ is compact orientable and $\mathcal{R}_{+}(M)\neq\emptyset$. Combining the result of Cerf \cite{Cerf} on $\textmd{Diff}_{+}(S^3)$,  Marques \cite{Mar}  further argued that $\mathcal{R}_{+}(S^{3})$ is path-connected.

In dimension 4, we remark that the connectedness of  the moduli space $\mathcal{R}_{+}(S^{4})/\textmd{Diff}(S^{4})$ is still unknown. The structure of $\textmd{Diff}(S^{4})$ is also a big unsolved problem.

In this paper, instead, we consider the positive isotropic curvature condition on four-manifolds. The notion of positive isotropic curvature was introduced by Micallef and Moore \cite{MicMo}. This elegant curvature condition plays a key role in Brendle-Schoen's  proof of famous $1/4$-sphere theorem, see \cite{BreSch}. The recent research \cite{Ham2} \cite{CZh1} \cite{CTZh} shows that positive isotropic curvature in dimension 4 is analogues to  positive scalar curvature in dimension 3.
For instance, Schoen-Yau \cite{SchYau} and Perelman \cite{Per2} showed that a compact orientable  three-manifold admits a metric with positive scalar curvature if and only if it is diffeomorphic to a connected sum of orientable spherical space forms $S^3/\Gamma_i$ and quotients  $(S^2\times\mathbb{R})/G_j$, where the actions of $\Gamma_i$ and $G_j$ are standard.
In \cite{CTZh}, the authors proved that a compact four-manifold admits a metric with positive isotropic curvature if and only if it is diffeomorphic to $S^{4},\mathbb{RP}^{4},(S^{3}\times{\mathbb{R}})/G,$ or a connected sum of them, where the action of $G$ on $S^{3}\times{\mathbb{R}}$ is also standard isometric action.

Motivated by this result, we investigate  the space of metrics on $M$ with positive isotropic curvature. Denote this space by $\textmd{PIC}(M)$.
It is well known that $\textmd{PIC}(M)\subset\mathcal{R}_{+}(M)$.
The purpose of this paper is trying to prove that the moduli space $\textmd{PIC}(M)/\textmd{Diff}(M)$ is path-connected. The result is the following:

\begin{thm} \label{t1.1}
The moduli space $\textmd{PIC}(M)/\textmd{Diff}(M)$ is path-connected if $M$ is orientable and diffeomorphic to one of the following manifolds:
\begin{enumerate}
  \item $S^{4}$,
  \item $(S^{3}\times{\mathbb{R}})/G$, where $G$ is a cocompact fixed point free discrete subgroup of isometries of $S^{3}\times{\mathbb{R}}$,
  \item a finite connected sum $(S^{3}/\Gamma_{i}\times S^{1})\#\ldots\#(S^{3}/\Gamma_{k}\times S^{1})$, where $\Gamma_{i}$ $(1\leq{i}\leq{k})$ is either the trivial group or a non-cyclic isometric group of $S^{3}$.
\end{enumerate}
\end{thm}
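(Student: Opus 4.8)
The overall strategy is to mimic Marques's argument in dimension three, replacing the Ricci flow with surgery on three-manifolds by the four-dimensional Ricci flow with surgery adapted to the positive isotropic curvature (PIC) condition, as developed by Hamilton and by Chen--Tang--Zhu. Given two metrics $g_0,g_1\in\mathrm{PIC}(M)$, I want to connect them, after applying diffeomorphisms, through a path in $\mathrm{PIC}(M)$. The plan is to run the PIC Ricci flow with surgery starting from each $g_i$; because $M$ is (after the connected-sum decomposition) built from the "canonical" pieces $S^4$, $(S^3\times\mathbb{R})/G$, and $S^3/\Gamma\times S^1$, the flow with surgery will, in finite time, decompose $(M,g_i)$ into a disjoint union of these canonical pieces carrying their standard (or standard-like) metrics. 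The key point is that each surgery, and each instance of the flow, can be performed in a one-parameter family so that it gives an explicit path in $\mathrm{PIC}$: the Ricci flow itself is a path since PIC is preserved in dimension four, and a surgery can be reversed along a path by gluing in a standard neck/cap with an explicit PIC metric. So the argument has three ingredients: (i) a "canonical neighborhood / standard solution" analysis showing the flow with surgery terminates at standard pieces; (ii) a reversibility statement saying each surgery is homotopic, within $\mathrm{PIC}$, to an identity-like move, so the whole flow-with-surgery process is a path in $\mathrm{PIC}(M)$ up to diffeomorphism; (iii) a direct verification that the spaces of standard metrics on each canonical piece are path-connected, and that the diffeomorphisms produced are isotopic to ones we can absorb.

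Concretely, I would proceed as follows. First, reduce to the three cases separately. For $S^4$: show that the PIC Ricci flow with surgery starting from any $g\in\mathrm{PIC}(S^4)$ becomes extinct in finite time, and that just before extinction the manifold is covered by canonical neighborhoods, so $g$ is connected within $\mathrm{PIC}(S^4)$ to a round metric; since all round metrics are mutually connected (scaling and the path-connectedness of the space of round metrics, i.e. of $\mathrm{SO}(5)\backslash\mathrm{GL}$-type data), this handles $S^4$. For $(S^3\times\mathbb{R})/G$: here the flow need not become extinct, but by the classification of $\kappa$-solutions and long-time behavior in the PIC setting one shows the metric is connected within $\mathrm{PIC}$ to a standard quotient metric on $(S^3\times\mathbb{R})/G$; then one checks the space of such standard metrics, modulo the mapping class group, is path-connected. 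For the connected sums $\#_i(S^3/\Gamma_i\times S^1)$ with each $\Gamma_i$ trivial or non-cyclic: run the flow with surgery; it performs neck-pinch surgeries that realize the connected-sum decomposition, terminating in the disjoint pieces $S^3/\Gamma_i\times S^1$ each with a standard metric; each surgery contributes an explicit PIC homotopy (cap off / reglue a standard $S^3\times I$ neck carrying the product-like PIC metric), and at the end one connects the standard metrics on $S^3/\Gamma_i\times S^1$, using that $\Gamma_i$ is trivial or non-cyclic precisely so that the relevant isometry/diffeomorphism groups behave well and the standard metrics form a connected set modulo diffeomorphism.

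The main obstacle — and where the hypotheses are really used — is ingredient (ii): making the surgery process yield an honest path in $\mathrm{PIC}(M)$ rather than just a sequence of diffeomorphic PIC metrics. This requires (a) a \emph{parametrized} surgery construction: at a surgery time, instead of cutting and capping abruptly, one interpolates through a family of PIC metrics that opens/closes the neck, which is possible because the geometry near a surgery cap is quantitatively close to a standard model whose PIC metrics are understood; and (b) controlling the diffeomorphisms: after the flow terminates one recovers $M$ from the canonical pieces by a specific gluing diffeomorphism, and one must show this diffeomorphism is isotopic to the identity (or at least lies in a subgroup whose action on $\mathrm{PIC}$ one can trivialize), which is exactly why the theorem restricts to $S^4$, to $S^3\times\mathbb{R}$ quotients, and to connected sums of $S^3/\Gamma\times S^1$ with $\Gamma$ trivial or non-cyclic — the excluded cyclic $\Gamma$ would produce mapping-class-group elements (analogous to the role of $\mathrm{Diff}(S^3)$ and Cerf's theorem in Marques's argument) that one cannot currently absorb. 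A secondary technical obstacle is establishing the fine structure theory (canonical neighborhood theorem, $\kappa$-noncollapsing, classification of limit solutions) for the PIC Ricci flow with surgery in the \emph{family} setting with uniform constants, so that the construction in (a) can be carried out continuously in the parameter; for this I would lean on the a priori estimates in \cite{Ham2}, \cite{CZh1}, \cite{CTZh} and upgrade them to parametrized versions.
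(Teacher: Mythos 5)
Your plan correctly captures the overall strategy for what is in fact Theorem~\ref{t1.2} of the paper: run PIC Ricci flow with surgery, show it terminates in standard pieces, and reverse each surgery through an explicit PIC path (this uses the parametrized M-W connected sum of Section~\ref{s2}, the $e^{-2f}$-conformal surgery model of Section~\ref{s5}, and the interpolation Lemma~\ref{interpolateof2neck}). But Theorem~\ref{t1.1} is not just Theorem~\ref{t1.2}; the missing ingredient is the uniqueness, up to diffeomorphism and isotopy, of the resulting canonical metric, and your proposal does not supply an argument for it.

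The gap is this: starting from $g_0$ and $g_1$, the flow with surgery produces two canonical decompositions of the same $M$ as connected sums $S^4\#M_1\#\cdots\#M_k$ and $S^4\#N_1\#\cdots\#N_l$ with $M_i\cong S^3/\Gamma_i\times_{f_i}S^1$, and nothing you have said forces these two decompositions to be compatible. In dimension three this compatibility is free because prime decomposition is unique (Kneser--Milnor); in dimension four it fails in general (e.g.\ Hirzebruch's example $(S^2\times S^2)\#\overline{\mathbb{CP}}{}^2\cong\mathbb{CP}^2\#\overline{\mathbb{CP}}{}^2\#\overline{\mathbb{CP}}{}^2$), so you must \emph{prove} uniqueness of the canonical decomposition for these particular manifolds. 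The paper does this in three algebraic/topological steps that are absent from your proposal: (i) $\pi_1(S^3/\Gamma\times_f S^1)=\Gamma\rtimes\mathbb{Z}$ is freely indecomposable (Lemma~\ref{l8.2}), so Kurosh's subgroup theorem gives $k=l$ and a matching of the pieces at the $\pi_1$ level (Theorem~\ref{t8.1}); (ii) Ue's weak bundle isomorphism theorem (Proposition~\ref{t8.5}) upgrades a $\pi_1$-isomorphism to an actual fiber-bundle diffeomorphism between the $S^3/\Gamma\times_f S^1$ pieces, and one then writes down an explicit isometry realizing conjugate/inverse monodromies; (iii) the hypothesis that $\Gamma_i$ is trivial or non-cyclic enters precisely because two spherical $3$-manifolds with isomorphic non-cyclic (or trivial) fundamental group are diffeomorphic, which fails for lens spaces, and because $S^3/\Gamma\times S^1$ then carries an orientation-reversing isometry $\chi(\theta,s)=(\theta,-s)$ that lets you match orientations of the subcomponents. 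Your text gestures at ``the relevant isometry/diffeomorphism groups behave well,'' but without (i)--(iii) you cannot produce the diffeomorphism $\varphi$ with $\varphi^*\tilde g_2$ isotopic to $\tilde g_1$, which is the entire content of Theorem~\ref{t1.1} beyond Theorem~\ref{t1.2}. (Also a small point: your analogy to Cerf's theorem misfires — Cerf is used by Marques only for the non-quotiented space $\mathcal{R}_+(S^3)$, not for the moduli space; the moduli-space uniqueness here is governed by Kurosh/Ue, not by $\pi_0\,\textmd{Diff}$.)
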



The dissatisfaction of the above result is that not all manifolds with $\textmd{PIC}(M)\neq\emptyset$ have  been  handled. Let $g$, $g'$ be metrics with positive isotropic curvature. We call $g$ is isotopic to  $g'$ if there exists a continuous path $g_{\mu},\mu\in[0,1]$ such that $g_{0}=g$, $g_{1}=g'$ and $g_{\mu}\in\textmd{PIC}(M)$ for every $\mu\in[0,1]$.
We require that $\textmd{PIC}(M)$ is always equipped with the $C^{\infty}$-topology.  The path connectedness of  $\textmd{PIC}(M)/\textmd{Diff}(M)$ just means that for any two $g_1,g_2\in\textmd{PIC}(M)$, there is a diffeomorphism $\varphi$ such that $g_1$ is isotopic to $\varphi^{*}g_2.$

The proof of Theorem \ref{t1.1} is mainly using Ricci flow with surgery. The idea is to use Ricci flow to deform the initial metric $g_0=g$. Since Ricci flow preserves positive isotropic curvature, the solution $g_t$ is isotopic to $g_0$ until singularities were hit for the first time $t_1$.
Cutting off the higher curvature part, gluing back suitable caps, we find $g_{t_1}$ is \textquoteleft separated\textquoteright \text{} into several pieces. Running $g_{t_1^+}$ with the Ricci flow on the pieces with lower curvature again, for $t>t_1$, $g_t$ is isotopic to $g_{t_1^+}$ until it hits  some singularities again at some time $t_2>t_1$.
Repeating these procedures until all pieces are terminated. Remember that at each singular time $t_k$, the orbifold $M_{t_k^-}$  is decomposed into several pieces $M_{k_1},\cdots, M_{k_l}$. The orbifold $M_{t_k^-}$ can be obtained as an orbifold connected sum (See \cite{CTZh}, or Section \ref{s2}) of these $M_{k_i}$'s.
The point, here, is to show that the metric $g_{t_k^-}$ on $M_{t_k^-}$ is isotopic to a metric obtained through a canonical construction from those $(M_{k_i},g_{t_{k}^+})$'s. Trying to assemble all pieces together, one can finally show that the initial metric is isotopic to a canonical metric obtained from some standard pieces.

\begin{thm} \label{t1.2}
Let $M^{4}$ be  a compact four-dimensional manifold with $\textmd{PIC}(M)\neq\emptyset$.
If $g\in\textmd{PIC}(M)$, then there is a path of metrics $g_{\mu}, \mu\in[0,1]$ such that $g_{0}=g$, $g_{1}$ is a canonical metric, and $g_{\mu}\in\textmd{PIC}(M)$ for all $\mu\in[0,1]$.
\end{thm}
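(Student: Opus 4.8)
The plan is to realize the Ricci-flow-with-surgery scheme outlined above, with the entire argument resting on one \emph{isotopy lemma} applied inductively over the finitely many surgery times. First I would run the Ricci flow with surgery for PIC four-manifolds, following Hamilton and Chen--Zhu, using the classification of $\epsilon$-necks and canonical neighborhoods available in this setting. Starting from $g_{0}=g$, the unmodified flow $g_{t}$ exists on a maximal interval $[0,t_{1})$; since the flow preserves positive isotropic curvature and $t\mapsto g_{t}$ is smooth, reparametrizing the time variable onto a compact interval already exhibits $g$ as isotopic in $\textmd{PIC}(M)$ to $g_{t_{1}-\delta}$ for all small $\delta>0$, and — using the a priori curvature control available near a surgery time — to $g_{t_{1}^{-}}$ itself.

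The technical heart is to show that the pre-surgery metric $g_{t_{1}^{-}}$ on the orbifold $M_{t_{1}^{-}}$ is isotopic, \emph{through metrics of positive isotropic curvature}, to the metric produced by the canonical connected-sum construction from the post-surgery pieces $(M_{1_{i}},g_{t_{1}^{+}})$. The region to be excised is a union of $\epsilon$-necks and $\epsilon$-caps which, after rescaling, is as close as one wishes to the standard round cylinder $S^{3}\times\R$ or one of its isometric quotients. One first deforms $g_{t_{1}^{-}}$ on this region so that it agrees exactly with the standard neck/cap model outside a controlled collar while staying in $\textmd{PIC}$: here one uses that PIC is an open condition, that the model cylinder has positive isotropic curvature with a definite margin, and that the surgery scale lies far below the curvature scale of the complement, so that cut-off interpolations between the two metrics do not leave $\textmd{PIC}$. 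Once the metric is standard along a neck, the surgery-and-capping with its canonical gluing metric is literally one endpoint of an explicit family of PIC metrics that slides the standard cap in from infinity along the neck. The same analysis is carried out on local uniformizing covers at the orbifold points coming from the non-cyclic $\Gamma_{i}$, using that the quotient cylinders $(S^{3}/\Gamma)\times\R$ again carry positive isotropic curvature.

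One then iterates: Ricci flow with surgery on the lower-curvature pieces $(M_{1_{i}},g_{t_{1}^{+}})$ reaches the next singular time $t_{2}>t_{1}$ and the previous step applies verbatim. In dimension four with PIC the flow with surgery becomes extinct after finitely many surgeries — every component eventually becomes a shrinking round spherical space form or a capped-off cylinder — so there are only finitely many times $t_{1}<\cdots<t_{N}$, at the end of which each surviving piece carries a metric that is close to a round $S^{4}$, $\mathbb{RP}^{4}$, or standard $(S^{3}\times\R)/G$, hence isotopic in $\textmd{PIC}$ to the corresponding canonical model. Composing the isotopies supplied by the isotopy lemma backwards through $t_{N},t_{N-1},\dots,t_{1}$, and using that the canonical connected-sum construction depends continuously on its metric inputs, one concludes that $g_{0}=g$ is isotopic through metrics of positive isotropic curvature to a canonical metric on $M$, which is the assertion of the theorem.

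I expect the isotopy lemma to be the crux, and within it the requirement of remaining in $\textmd{PIC}$ throughout the interpolation between $g_{t_{k}^{-}}$ and the canonically glued metric. Unlike positive scalar curvature, positive isotropic curvature is not preserved under arbitrary connected sums, so the Gromov--Lawson/Schoen--Yau type gluing must be performed with quantitative estimates tied to the Ricci-flow surgery scales; making the ``straighten the neck, then slide the cap in from infinity'' homotopy stay in $\textmd{PIC}$, and doing so compatibly at orbifold points and uniformly over the finitely many surgeries, is where the real work lies. A secondary point is to arrange the otherwise non-canonical choices in the surgery process — the surgery parameters and the location of the cuts — so that the resulting metric is well defined up to PIC-isotopy; for the statement as given it suffices to fix one admissible choice of these data.
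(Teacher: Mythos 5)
Your overall plan — run the Ricci flow with surgery forward, show at each surgery time that the pre-surgery metric is PIC-isotopic to the M-W connected sum of the post-surgery pieces (this is the role played by Lemma 5.5 and Proposition 5.1 in the paper), and compose the isotopies backward — is essentially the approach the paper takes, and you correctly locate the main quantitative difficulty in keeping the neck/cap interpolations within PIC. However, there is a genuine gap in the final step. At each surgery time $t_{i+1}$ the manifold splits into two kinds of pieces: the finite-curvature continuation $\tilde\Omega$, which is carried forward by the flow and handled by your iteration, and the discarded high-curvature pieces $\hat\Omega$ (the $\epsilon$-horn tips, plus the components with positive curvature operator), which are capped off and \emph{never re-run} through the flow. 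Your argument does not say how these discarded pieces acquire canonical metrics. The same issue recurs at extinction: you assert the surviving pieces are ``close to a round $S^{4}$, $\mathbb{RP}^{4}$, or standard $(S^{3}\times\R)/G$,'' but this is too strong — all that is guaranteed is that every point is covered by a canonical neighborhood, which still leaves long $\epsilon$-tubes with caps of several inequivalent types (Types $\mathbf{A}$, $\mathbf{B}$, $\mathbf{C}$ in the paper's terminology, arising from ancient solutions, $\mathbb Z_{2}$-quotient necks, and earlier surgery caps). Showing that such a ``canonical-neighborhood-covered'' orbifold is PIC-isotopic to a canonical metric is the content of the paper's Proposition 7.1, a multi-case argument (cut along a central slice, straighten the intervening tube via the two-neck interpolation lemma, extend the deformation across caps via Lemma 5.4, merge the resulting pieces with Proposition 4.13); it is not a perturbation statement. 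Without something playing the role of Proposition 7.1 you have no base case and no way to handle the discarded pieces, so the backward composition of isotopies does not close. A secondary, smaller omission: you need (and gesture at but do not articulate) a result that M-W connected sums of canonical metrics are again isotopic to canonical metrics — Proposition 4.13 in the paper — to conclude that assembling the capped-off pieces actually yields a canonical metric on $M$.
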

The precise definition of a canonical metric is given in Section \ref{s3}. The above  idea has been successfully used by Marques \cite{Mar} in dimension 3.  The main steps of our proof follows the line of arguments in \cite{Mar}.
To prove Theorem \ref{t1.1}, one has to prove the canonical metrics of Theorem \ref{t1.2} are unique module the group of diffeomorphisms. See Section \ref{s8} for the differences between dimension 3 and 4.

The organization of this paper is as follows.
In Section \ref{s2},  we fix some terminologies and discuss  Micallef and Wang's  connected sum construction of manifolds with PIC, and extend it to families of orbifold connected sums.
In Section \ref{s3}, we give the definition of canonical metrics used throughout this paper.
In Section \ref{s4},  we give  some preliminary results on deforming metrics with positive isotropic curvature.
In Section \ref{s5}, we recall  surgery process of Hamilton on orbifolds and discuss the isotopic property of the metric during the surgeries.
In Section \ref{s6},  we discuss the canonical neighborhood assumption.
In Section \ref{s7},  we prove Theorem \ref{t1.2}.
In Section \ref{s8},  we  prove Theorem \ref{t1.1}.

\vspace*{2em}

\noindent\textbf{Acknowledgments.} The authors  would like to thank Professors X.P.Zhu and  S. H. Tang for helpful discussions. The first author is partially supported by NSFC 11025107.

\section{Preliminaries and M-W connected sum}\label{s2}

\subsection{Positive isotropic curvature}\label{s2.1}

Let's recall the notion of positive isotropic curvature.

Let $(M^{n},g)$ be an $n$-dimensional Riemannian manifold, $n\geq4.$  $(M^n,g)$ is said to have positive isotropic curvature (PIC for short) if $R_{1313}+R_{1414}+R_{2323}+R_{2424}-2R_{1234}>0$ for any othornormal four vectors $e_1,e_2,e_3,e_4$. PIC was introduced by Micallef and Moore \cite{MicMo}, it appears naturally in the second variation formula of area of surfaces.  
PIC implies the positive scalar curvature condition (see \cite{MicWang}).
One of the most interesting thing is that PIC is preserved by Ricci flow (see \cite{Ham2}, \cite{BreSch}, \cite{Ng}).

Let $(M^{4},g)$ be a four-dimensional Riemannian manifold. The orientation gives  the bundle $\Lambda^{2}TM$ a decomposition  $\Lambda^{2}TM=\Lambda^{2}_{+}TM\bigoplus\Lambda^{2}_{-}TM$ into its self-dual and anti-self-dual parts. Therefore the curvature operator has a block decomposition
\[\mathcal{R}=\begin{pmatrix} A & B \\ B^{t} & C \end{pmatrix},\]
where $A = W_{+} + \frac{R}{12}I$, $C = W_{-} + \frac{R}{12}I,$ and $B$ is the traceless part of the Ricci curvature, $W_{\pm}$ are the self-dual and anti-self-dual Weyl tensors respectively.
Denote the eigenvalues of the matrices $A, C$ and $\sqrt{BB^{t}}$ by $a_{1}\leq a_{2}\leq a_{_{3}}, c_{1}\leq c_{2}\leq c_{3}, b_{1}\leq b_{2}\leq b_{3}$ respectively.
It is known that PIC is equivalent to $a_{1} + a_{2} > 0$ and $c_{1} + c_{2} > 0$ (see \cite{Ham2}).
From this, it is clear that if $g$ is locally conformally flat, then $g$ has positive scalar curvature if and only if $g$ has positive isotropic curvature.

If we define
\[\sigma_{g}:=R_{g}-6\max\{\lambda_{\max}(W_{+}), \lambda_{\max}(W_{-})\},\]
where $\lambda_{\max}(W_{\pm})$ are the largest eigenvalue of $W_{\pm}$ respectively. Since both $W_{+}$ and $W_{-}$ are trace free, it is easy to see that the condition $\sigma_{g}>0$ is equivalent to $a_{1} + a_{2} > 0$ and $c_{1} + c_{2} > 0$, that is to say, $(M^{4}, g)$ has positive isotropic curvature.

Let $\tilde{g}=u^{2}g, u\in C^{\infty}(M), u>0$, then by direct computation we can obtain the following relations:
\[R_{\tilde{g}}=u^{-3}(-6\Delta u+R_{g}u),\]
\[\sigma_{\tilde{g}}=u^{-3}(-6\Delta u+\sigma_{g}u).\]

\subsection{Orbifolds}\label{s2.2}

We will clarify some terminologies and notations about orbifolds.

For an $n$-dimensional orbifold $X^{n}$, $x \in X$, we use $\Gamma_{x}$ to denote the local uniformization group at $x$, that is, there is an open neighborhood $B_{x}\ni x$, such that $B_{x}=B^{n}/\Gamma_{x}$, where $B^{n}$ is diffeomorphic to $\mathbb{R}^{n}$ and $\Gamma_{x}$ is a finite subgroup of linear transformations fixing the origin.

By the Lefschetz fixed-point formula, every orientation-reversing diffeomorphism of $S^{3}$ has a fixed point (see \cite{Scott}).
Therefore, if $X$ is a four dimensional orbifold with at most isolated singularities, then for every point $x\in{X}$, $\Gamma_{x}\subset{SO(4)}$.

We will fix some notations of orbifolds that will appear in this paper.

Suppose $\Gamma$ is a fixed point free finite subgroup of $SO(4)$ acting on $S^{3}$. If we write the equation of $S^{4}$ as $x_{1}^{2}+\ldots+x_{5}^{2}=1$. Regard $S^3$ as an equator in $S^4,$  $\Gamma$ can naturally extend  to act isometrically on $S^{4}$ by fixing the $x_{1}$-axis. We will still use $\Gamma$ to denote this group action. The orbifold $S^{4}/\Gamma$ has exactly two fixed points $(1, 0, 0, 0, 0)$ and $(-1, 0, 0, 0, 0)$ with local uniformization group $\Gamma$.
In this paper, if we say a spherical orbifold is of the form $S^{4}/\Gamma$ with $\Gamma\subset{SO(4)}$, we always means this case.

If $S^{3}/\Gamma$ admits a fixed point free isometry $\tau$ satisfying $\tau^{2}=1$, then we can define an action $\hat{\tau}$ on $S^{3}/\Gamma\times{\mathbb{R}}$ by $\hat{\tau}(\theta,r)=(\tau(\theta),-r)$, where $\theta\in{S}^{3}/\Gamma,r\in{\mathbb{R}}$. The quotient $(S^{3}/\Gamma\times{\mathbb{R}})/\{1,\hat{\tau}\}$ is a smooth four manifold with a neck-like end $S^{3}/\Gamma\times{\mathbb{R}}$.
We denote this manifold by $C^{\tau}_{\Gamma}$.
If we think of $S^{4}$ as the compactification of $S^{3}\times{\mathbb{R}}$ by adding two points at infinities of $S^{3}\times{\mathbb{R}}$, then $\Gamma$ and $\hat{\tau}$ can be naturally  regarded as isometries of the standard $S^{4}$ .
We denote $S^{4}/\langle\Gamma,\hat{\tau}\rangle$ the resulting orbifold in this paper.
Obviously, $C^{\tau}_{\Gamma}$ is diffeomorphic to the smooth manifold obtained by removing the orbifold singularity (or a smooth point when $\Gamma$ is trivial) from $S^{4}/\langle\Gamma,\hat{\tau}\rangle$.
$\mathbb{RP}^{4}\setminus\bar{B}^{4}$ is an example of $C^{\tau}_{\Gamma}$.

In the following, we define topologically necks and caps whose meaning will be fixed in our subsequent discussions.  A neck is defined to be a manifold diffeomorphic to $S^{3}/\Gamma\times{\mathbb{R}}$.
For caps, we have smooth caps and orbifold caps. Smooth caps consist  of $C_{\Gamma}^{\tau}$ and $B^{4}$. Our orbifold caps have two types.

The orbifold cap of Type I is diffeomorphic to $\mathbb{R}^{4}/\Gamma$, where $\Gamma\subset{SO(4)}$ is a finite subgroup fixing the origin of $\mathbb{R}^{4}$ and acting freely on the unit three-sphere in $\mathbb{R}^{4}$.
We denote it by $C_{\Gamma}$.
$C_{\Gamma}$ has a neck-like end $S^{3}/\Gamma\times{\mathbb{R}}$ and one orbifold singularity with local uniformization group $\Gamma$.

The orbifold cap of type II is constructed as follows. Let the equation of $S^{3}$ be $x_{1}^{2}+\ldots+x_{4}^{2}=1$.
The isometry $\gamma:(x_{1},x_{2},x_{3},x_{4})\rightarrow(x_{1},-x_{2},-x_{3},-x_{4})$ has exactly two fixed points $p_{1}=(1, 0, 0, 0)$ and $p_{2}=(-1, 0, 0, 0)$, and satisfies $\gamma^2=1$.
We define an action $\hat{\gamma}$ on $S^{3}\times{\mathbb{R}}$ by $\hat{\gamma}(x,r)=(\gamma(x),-r)$, where $x\in{S}^{3},r\in{\mathbb{R}}$.It is clear  $\hat{\gamma}^2=1.$
Denote the quotient orbifold $(S^{3}\times{\mathbb{R}})/\{1,\hat{\gamma}\}$ by $C_{II}$, and call it the orbifold cap of Type II.
It has a neck-like end $S^{3}\times{\mathbb{R}}$ and two orbifold singularities $(p_{1},0),(p_{2},0)$ with local uniformization group $\mathbb{Z}_{2}$.
There is another way to understand $C_{II}$. Let  the equation of $S^{4}$ be $x_{1}^{2}+\ldots+x_{5}^{2}=1$, the isometry $\zeta:(x_{1},x_{2},x_{3},x_{4},x_{5})\rightarrow(x_{1},-x_{2},-x_{3},-x_{4},-x_{5})$ has exactly two fixed points $(1,0,0,0,0)$ and $(-1,0,0,0,0)$. Removing a smooth point from $S^{4}/\{1,\zeta\}$, we get an orbifold diffeomorphic to $C_{II}$.

\subsection{Orbifold connected sum} \label{s2.3}

Suppose $X_{1}, X_{2}$ are $n$-dimensional orbifolds (not necessarily distinct) with at most isolated singularities. Let $x_{1}\in X_{1}, x_{2}\in X_{2}$ be two distinct points (not necessarily singular) such that $\Gamma_{x_{1}}$ is conjugate to $\Gamma_{x_{2}}$ as linear subgroups.
Let $f$ be a diffeomorphism from $\partial{B}_{x_{1}}$ to $\partial{B}_{x_{2}}$, we remove $B_{x_{1}}$ and $B_{x_{2}}$ from the orbifolds, and identify the boundary $\partial{B}_{x_{1}}$ and $\partial{B}_{x_{2}}$ by the diffeomorphism $f$.
The resulting orbifold is denoted by $\#_{f;x_{1},x_{2}}(X_{1},X_{2})$ or $\#_{f}(X_{1},X_{2})$, and is called orbifold connected sum of $X_{1}$ and $X_{2}$. Note that the diffeomorphism type of the resulting orbifold depends only on the isotopy class of $f$.
When orientation is taken into account, we adopt the convention that the orientation of $\partial{B}_{x_{1}}$ is induced from the orientation of $X_{1}$, while the orientation of $\partial{B}_{x_{2}}$ is reverse to that induced from $X_{2}.$

It has to be mentioned that any diffeomorphim $f\in\textmd{Diff}(S^{3}/\Gamma)$ must be  isotopic to some isometry $f'\in\textmd{Isom}(S^{3}/\Gamma)$ by \cite{Mcc}.

Suppose $X$ is diffeomorphic to $S^{4}/\Gamma,$ $\Gamma\subset{SO(4)}$, with two orbifold singularities $p_{1}$ and $p_{2}$ (when $\Gamma$ is trivial, we take $p_{1}$ and $p_{2}$ to be arbitrary two different smooth points).
If we perform an orbifold connected sum on $X$ with itself at $p_{1}$ and $p_{2}$ by $f\in\textmd{Diff}(S^{3}/\Gamma)$, then we obtain the mapping torus of $f$, and denote it by $S^{3}/\Gamma\times_{f}{S}^{1}$.
$S^{3}/\Gamma\times_{f}{S}^{1}$ has the structure of a fiber bundle over $S^{1}$ with fibers $S^{3}/\Gamma$ and the monodromy $f$. It is easy to see that the bundle structure depends only on the isotopy class of  $f$.


\subsection{M-W connected sum}\label{SECconnsum}

Schoen and Yau \cite{SchYau},  Gromov and Lawson \cite{GL}  independently proved that the connected sum of two manifolds with positive scalar curvature still admits a metric with positive scalar curvature.
In the paper \cite{MicWang}, Micallef and Wang proved that the positive isotropic curvature condition is also preserved under connected sums. We give a short description of Micallef and Wang's construction here.

Let $(M^{n}_{1}, g_{1})$ be a Riemannian manifold with positive isotropic curvature. Given $p_{1}\in M_{1}$, denote $B_{r}(p_{1})$ the geodesic ball of radius $r$ around $p_{1}$. Let $r_{1}>\rho_{1}>0$ be two positive number. By careful computations (see \cite{MicWang} for details), Micallef and Wang showed  that we can find a nonincreasing smooth function $\alpha: (0,r_{1}]\rightarrow\mathbb{R}$ such that
\[2k_{1}+\frac{\alpha(\frac{3}{2}-\alpha)}{r^{2}}+\frac{\alpha'}{r} > 0,\]
\[\alpha(r)\equiv 1 \text{ near }0, \]
\[\alpha(r)\equiv 0 \text{ on }[r^{\ast}_{1},r_{1}],\]
where $k_{1}$ is the minimum of isotropic curvatures of $g_{1}$, $r^{\ast}_{1}<r_{1}$ is a sufficiently small constant depending on $g_{1}$.
If we take $u(r)=\exp(\int_{r}^{r_{1}}(\alpha(x)/x) dx)$ in $B_{r_{1}}(p_{1})$, and $u\equiv1$ outside $B_{r_{1}}(p_{1})$, then the metric $g'_{1}=u^{2}g_{1}$ is a complete metric with positive isotropic curvature on $M\setminus\{p_{1}\}$, is the same as $g_{1}$ outside $B_{r_{1}}(p_{1})$, and near $p_{1}$ becomes $C^{2}$ close to the product metric $ds^{2}+\rho^{2}d\theta^{2}$ on $S^{n-1}\times\mathbb{R}^{+}$ for some sufficiently small $\rho<\rho_{1}$, where $d\theta^{2}$ is the metric on $S^{n-1}$ with constant curvature $1$.
We can slightly modify $g'_{1}$ at the half-cylinder end with a cut-off function to obtain a metric $\tilde{g}_{1}$ with positive isotropic curvature such that it is the product metric $ds^{2}+\rho^{2}d\theta^{2}$ on $S^{n-1}\times\mathbb{R}^{+}$ when $s$ is large enough.

Suppose we have another manifold $(M^{n}_{2},g_{2})$ with positive isotropic curvature and $p_{2}\in M_{2}$, we can do the same construction as above such that the new metric $\tilde{g}_{2}$ also has the same half-cylinder end with metric $ds^{2}+\rho^{2}d\theta^{2}.$
We  cut the two ends when $s$ is large enough and glue $M^{n}_{1}\setminus\{p_{1}\}$ and $M^{n}_{2}\setminus\{p_{2}\}$ together by an $f\in\textmd{Isom}(S^{n-1}).$ We obtain a manifold  $\#_{f;p_{1},p_{2}}(M_{1},M_{2})$ with  a metric $\#_{f;p_{1},p_{2}}(g_{1},g_{2})$ of positive isotropic curvature.



\begin{rem}

Clearly, the resulting metric $\#_{f;p_{1},p_{2}}(g_{1},g_{2})$ in the above construction is not unique.
It depends on our choice of the small parameters $r_{1},\rho_{1}$ and the function $\alpha$ and the isomorphism $f.$ But the resulting metrics with all continuous different choices are isotopic.

\end{rem}

Since the inequalities the function $\alpha$ satisfied depend only on the lower bound of the isotropy curvature, the above construction can also applied to a continuous family of metrics with PIC.
The construction above is local, we can apply it to orbifold connected sums. Therefore, the above construction can be generalized to the case of a families of orbifolds. The precise description is given in the following remark.

\begin{rem}\label{remark2.2}
Let $(X_{1},g_{1,\mu}),(X_{2},g_{2,\mu})$, $\mu\in[0,1]$ be two continuous paths of metrics with PIC on compact orbifolds $X_{1}$ and $X_{2}$ with at most isolated singularities.
Suppose $x_{1,\mu}\in X_{1}, x_{2,\mu}\in X_{2}$ are two family of points (not necessarily singular) such that $\Gamma_{x_{1,\mu}}$ is conjugate to $\Gamma_{x_{2,\mu}}$ as linear subgroups and independent of $\mu$ (so we can denote them by $\Gamma$).
Then one can deform these two families of metrics to get $(X_{i}\setminus\{x_{i,\mu}\},\tilde{g}_{i,\mu})$ with PIC, $i=1,2$, such that outside small geodesic balls $(B_{i}(x_{i,\mu},r),g_{i,\mu})$ they are the same as $(X_{i},g_{i,\mu})$, and near $x_{i,\mu}$ the metrics are the product metrics $ds^{2}+\rho^{2}d\theta^{2}$ on $S^{n-1}/\Gamma\times\mathbb{R}^{+}$ for some $\rho\ll r$.
For a continuous path of isometrics $f_{\mu}\in\textmd{Isom}(S^{n-1}/\Gamma)$, one can form a continuous path of metrics $\#_{f_{\mu};x_{1,\mu},x_{2,\mu}}(g_{1,\mu},g_{2,\mu}),{\mu\in[0,1]}$ with PIC on a fixed  orbifold $X\cong \#_{f_{\mu};x_{1,\mu},x_{2,\mu}}(X_{1},X_{2})$.
In addition, we can apply the same construction to a fixed $(X,g_{\mu})$ with $x_{1,\mu},x_{2,\mu}\in X$, and obtain a family of metrics $\#_{f_{\mu};x_{1,\mu},x_{2,\mu}}(g_{\mu})$ with PIC on a fixed $Y\cong \#_{f_{\mu};x_{1,\mu},x_{2,\mu}}(X)$ for $\mu\in[0,1]$.

\end{rem}

We will call such a procedure an \textmd{M-W} connected sum for short. 
We summarize the above result into a proposition:
\begin{prop}\label{orbconnsum}
The \textmd{M-W} connected sum can be performed continuously on orbifolds, such that the resulting metrics have PIC and vary continuously with the parameters.
\end{prop}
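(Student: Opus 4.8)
The plan is to assemble Proposition \ref{orbconnsum} directly from the construction sketched in \S\ref{SECconnsum} and Remark \ref{remark2.2}, checking only that every step of the Micallef--Wang deformation can be carried out with continuous dependence on a parameter $\mu\in[0,1]$. The starting observation is that the function $\alpha$ produced in \cite{MicWang} is required only to satisfy the three displayed conditions, and the sole place where the geometry of the metric enters is through the constant $k_1$, the infimum of isotropic curvatures. Since our orbifolds $X_1,X_2$ are compact and the metrics $g_{i,\mu}$ vary continuously in the $C^\infty$-topology, the quantities $k_{i,\mu}:=\inf(\text{isotropic curvatures of }g_{i,\mu})$ are continuous in $\mu$ and hence uniformly bounded below by some $k_0>0$ on $[0,1]$. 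Therefore a \emph{single} choice of $\alpha$ (and of the small parameters $r,\rho$, and of $r^\ast_i<r$) works simultaneously for all $\mu$: the inequality $2k_{i,\mu}+\alpha(\tfrac32-\alpha)/s^2+\alpha'/s>0$ holds for every $\mu$ because $k_{i,\mu}\ge k_0$. This is the key point that makes the family version no harder than the single-metric version.

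First I would fix, once and for all, normal coordinate charts: by compactness there is $r_0>0$ so that for all $\mu$ the geodesic ball $B(x_{i,\mu},r_0)$ is uniformized by a ball in $\mathbb{R}^n/\Gamma$ (note $\Gamma$ is $\mu$-independent by hypothesis), and the exponential-map charts depend continuously on $\mu$. Choosing $r<r_0$ and the conformal factor $u_\mu(s)=\exp\!\big(\int_s^{r}(\alpha(x)/x)\,dx\big)$ inside $B(x_{i,\mu},r)$ and $u_\mu\equiv1$ outside, the metrics $\tilde g_{i,\mu}=u_\mu^2 g_{i,\mu}$ (after the standard cut-off modification near the cylindrical end, which again uses only the lower curvature bound) have PIC for every $\mu$, agree with $g_{i,\mu}$ outside $B(x_{i,\mu},r)$, and near $x_{i,\mu}$ coincide with the fixed product metric $ds^2+\rho^2 d\theta^2$ on $S^{n-1}/\Gamma\times\mathbb{R}^+$. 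Continuity of $\mu\mapsto\tilde g_{i,\mu}$ in $C^\infty$ on $X_i\setminus\{x_{i,\mu}\}$ follows from continuity of $g_{i,\mu}$, of the charts, and the fact that $u_\mu$ is built from the $\mu$-independent $\alpha$. Next, for a continuous path $f_\mu\in\textmd{Isom}(S^{n-1}/\Gamma)$ one cuts both cylindrical ends at a fixed large value of $s$ and glues by $f_\mu$; since the ends are \emph{identical} product cylinders for all $\mu$, the glued-up manifold is a fixed orbifold $X$ and the resulting metrics $\#_{f_\mu;x_{1,\mu},x_{2,\mu}}(g_{1,\mu},g_{2,\mu})$ form a continuous path of PIC metrics on $X$. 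The self-connected-sum case (gluing two ends of a single $(X,g_\mu)$) is verbatim the same, once one checks that for small $r$ the two balls $B(x_{1,\mu},r)$ and $B(x_{2,\mu},r)$ stay disjoint, which holds by compactness since $x_{1,\mu}\neq x_{2,\mu}$ uniformly.

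The main obstacle, such as it is, is purely bookkeeping rather than conceptual: one must verify that the cut-off modification of $g'_{i,\mu}$ into an \emph{exact} product metric on the overlap region can be done continuously in $\mu$ while preserving PIC. This is handled by interpolating, via a fixed cut-off function in $s$, between the already-PIC metric $g'_{i,\mu}$ (which is $C^2$-close to the product) and the product metric; PIC is an open condition, the $C^2$-closeness is uniform in $\mu$ by compactness, and the interpolation parameter can be taken $\mu$-independent, so the interpolated metrics remain PIC for all $\mu$ and depend continuously on $\mu$. Granting this, Proposition \ref{orbconnsum} is exactly the conjunction of the statements in Remark \ref{remark2.2}, and the proof is complete. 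I would close by remarking that the uniqueness-up-to-isotopy of the result, for different continuous choices of $r,\rho,\alpha,f_\mu$, follows by the same reasoning applied to a two-parameter family interpolating between the two sets of choices.
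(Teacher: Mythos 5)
Your argument is correct and coincides with the paper's own (very brief) justification, given in the paragraph preceding Remark~\ref{remark2.2}: the inequalities constraining $\alpha$ in the Micallef--Wang construction depend only on the lower bound of isotropic curvature, which is uniform over a compact family of metrics, and the construction is local, hence applies to orbifolds. You have simply filled in the bookkeeping --- uniform choice of $r$, $\rho$, $r^\ast_i$, continuity of the charts in $\mu$, the cut-off interpolation to the exact product, and disjointness of the two balls in the self-connected-sum case --- that the paper leaves implicit.
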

\begin{rem}\label{r2.4}
Let $(M_1,g_1)$ and $(M_2,g_2)$ be two locally conformally flat four-manifolds (or orbifolds) with positive scalar curvature. There is another well-known connected sum construction $(M_1\#M_2,g)$ from $g_1$ and $g_2$ so that the resulting metric $g$ is conformally flat with positive scalar curvature (see \cite{SchYau} \cite{Hoe}). Note that $(M_i,g_i)$ also has PIC for $i=1,2,$ so we may also construct an \textmd{M-W} connected sum  $g'$ from $g_1$ and $g_2.$  The latter construction is performed at two small  geodesic balls, while the former is performed at two Euclidean balls conformal to the original metrics, we require  the gluing map $f\in Iso(S^3/\Gamma)$ in these two constructions lying in the same mapping class. Here, we remark that  two metrics $g$ and $g'$ obtained from the above two different constructions are isotopic.
\end{rem}

\section{Standard metrics and canonical metrics}\label{s3}

\subsection{Standard metrics}Let $h_{std}$ be the standard cylindrical metric of constant scalar curvature $1$ on $S^{3}\times{\mathbb{R}}$.
Let $G$ be a cocompact fixed point free discrete subgroup of the isometry group of $(S^{3}\times{\mathbb{R}},h_{std})$.
We denote $\Gamma=G\bigcap\textmd{Isom}(S^{3}\times\{0\})$.
Recall that every orientation-reversing diffeomorphism of $S^{3}$ has a fixed point.
Since the action $G$ is discrete and free, $\Gamma$ is finite, and the action of $\Gamma$ on $S^{3}$ is orientation-preserving.
Clearly, $G/\Gamma$ acts isometrically on $S^{3}/\Gamma\times{\mathbb{R}}$ with the quotient $(S^{3}\times{\mathbb{R}})/G$, and $(G/\Gamma)\bigcap\textmd{Isom}(S^{3}/\Gamma\times\{0\})=\emptyset$.
We denote $p_{2}$ the projection from $\textmd{Isom}(S^{3}/\Gamma\times{\mathbb{R}})=\textmd{Isom}(S^{3}/\Gamma)\times\textmd{Isom}(\mathbb{R})$ into the second factor.
Since $G/\Gamma$ acts cocompactly on $S^{3}/\Gamma\times{\mathbb{R}}$, $p_{2}(G)$ is a discrete subgroup of $\textmd{Isom}(\mathbb{R})$, hence is $\mathbb{Z}$ or $D(\infty)$, where $D(\infty)$ denote the isometry group of $\mathbb{R}$ generated by  two reflections $\varsigma_{1}:r\rightarrow{-r}$ and $\varsigma_{2}:r\rightarrow{r_1-r},$ with $r_1\neq 0.$

If $p_{2}(G/\Gamma)=\mathbb{Z}$, let $\varsigma_{0}$ be a generator of $\mathbb{Z}$, then $\rho=p_{2}^{-1}(\varsigma_{0})$ satisfying $\rho:(\theta,r)\rightarrow(f(\theta),r+r_0)$, with $r_0>0$, $f\in\textmd{Isom}(S^{3}/\Gamma)$.
In this case, $(S^{3}\times\mathbb{R})/G$ is obtained from identifying the boundaries of $S^{3}/\Gamma\times[0,r_0]$ by the isometry $f\in\textmd{Isom}(S^{3}/\Gamma)$, i.e. $(S^{3}\times{R})/G$ is of the form $S^{3}/\Gamma\times_{f}S^{1}$.

If $p_{2}(G/\Gamma)=D(\infty)$, let  $\rho_{1}=p_{2}^{-1}(\varsigma_{1})$ and $\rho_{2}=p_{2}^{-1}(\varsigma_{2})$ be two inverses of the reflecting generators of $D(\infty).$ As isometries of  $S^{3}/\Gamma\times\mathbb{R}$, $\rho_1$ and $\rho_2$ have the following forms $\rho_{1}:(\theta,r)\rightarrow(\tau_{1}(\theta),-r)$, $\rho_{2}:(\theta,r)\rightarrow(\tau_{2}(\theta),r_1-r)$.
Since the action $G/\Gamma$ is free, $\tau_{1},\tau_{2}\in\textmd{Isom}(S^{3}/\Gamma)$ are orientation-preserving involutions.
Therefore, in this case the quotient $(S^{3}\times\mathbb{R})/G$ is not orientable. Note that by Section \ref{s2.2}, $S^{4}/\langle\Gamma,\hat{\tau}_{i}\rangle$ are spherical orbifolds with one singularity of local uniformization group  $\Gamma.$  It is clear that $(S^{3}\times\mathbb{R})/G$ is diffeomorphic to an orbifold connected sum of $S^{4}/\langle\Gamma,\hat{\tau}_{1}\rangle$ and $S^{4}/\langle\Gamma,\hat{\tau}_{2}\rangle.$

Suppose $M$ is a differentiable manifold obtained by identifying the boundaries of $S^{3}/\Gamma\times[0,r_0]$ by $f\in\textmd{Diff}(S^{3}/\Gamma)$.
Since $f\in\textmd{Diff}(S^{3}/\Gamma)$ is isotopic to an isometry $f'\in\textmd{Isom}(S^{3}/\Gamma)$ by \cite{Mcc}, $M$ is diffeomorphic to $S^{3}/\Gamma\times_{f'}S^{1}$, and admits a geometry structure induced by an isometric group action $G$ on $S^{3}\times\mathbb{R}$ as described above.
Similarly, if $M$ is obtained by performing an orbifold connected sum of $S^{4}/\langle\Gamma,\hat{\tau}_{1}\rangle$ and $S^{4}/\langle\Gamma,\hat{\tau}_{2}\rangle$ at the singularities, then $M$ also admits a geometry structure induced by an isometry group action $G$ on $S^{3}\times\mathbb{R}$.
The induced metric from $S^{3}\times\mathbb{R}$ on $M$ is also called a standard metric, and is denoted by $h_{std}$.

\begin{rem} \label{remark-diff-iso-1}

We note that standard metrics on such $M$ are not unique.  It depends on the diffeomorphism from $M$ to some $(S^{3}\times\mathbb{R})/G$.
\end{rem}

Let $h_{round}$ be the standard metric of constant curvature $1$ on the sphere $S^{4}$. If $\Gamma$ is a discrete subgroup of isometries of  $S^{4}$,  we will also call the induced metric on $S^{4}/\Gamma$ a standard metric, and still denote it by $h_{round}$.

Suppose $S^{4}/\tilde{\Gamma}$ is a spherical orbifold with at most isolated singularities.
By studying the corresponding group actions on $S^{4}$ (see Lemma 5.1 and 5.2 in \cite{CTZh}), one can derive that such a $S^{4}/\tilde{\Gamma}$ has no more than two orbifold singularities.
So there are three possible types of $S^{4}/\tilde{\Gamma}$ according to the number of singularities:
the first type is $S^{4}$ or $\mathbb{RP}^{4}$, the second type is of the form $S^{4}/\langle\Gamma,\hat{\tau}\rangle$ with $\Gamma\subset{SO(4)}$, the third case is of the form $S^{4}/\Gamma$ with $\Gamma\subset{SO(4)}$.

In summary, we only defined the standard metrics on manifolds or orbifolds of the following types:\\
\text{}\ \ \ i) $S^{3}/\Gamma\times_{f}S^{1},$ $S^{4}/\langle\Gamma,\hat{\tau}_{1}\rangle\#_f S^{4}/\langle\Gamma,\hat{\tau}_{2}\rangle$,\\
\text{}\ \ \ ii) $S^{4}/\tilde{\Gamma}.$

\subsection{Canonical metrics}

In the following, we will define canonical metrics on certain four-manifolds or orbifolds.

Let $p_{1},\ldots,p_{k},q_{1},\ldots,q_{l}$ be $(k+l)$ distinct points on $(S^{4},h_{round})$.
Let $(M_{i},h_{i}),i\in\{1,\ldots,k\}$ be compact manifolds diffeomorphic to  $(S^{3}\times\mathbb{R})/G_{i}$ with standard metrics, with $p_{i}'\in M_{i}$.
Let $(X_{j},\tilde{h}_{j}),j\in\{1,\ldots,l\},$ be spherical orbifolds $S^{4}/\Gamma_{j}$ with isolated singularities and with standard metrics, where $\Gamma_{j}\subset O(5)$, $\Gamma_{j}\neq\{1\}.$ Let $q_{j}'\in X_{j}$ be smooth points of $X_j,$ for $j=1,2,\cdots,l$.
If we perform the \textmd{M-W} connected sum operation (see Section \ref{SECconnsum} for definition)  on $(S^{4},h_{round},p_{1},\cdots,p_{k},q_{1},\cdots,q_{l})$ and those $(M_{i},h_{i},p_i')$ and $(X_{j},\tilde{h}_{j},q_j'),$   we obtain a Riemannian orbifold $(X,\hat{g})$.
We will call such $\hat{g}$ a canonical metric. Clearly, the resulting metric $\hat{g}$ has positive isotropic curvature.

The sphere $(S^{4},h_{round})$ in the above construction will be called a principal sphere.
$(M_{i},h_{i})$ and $(X_{j},\tilde{h}_{j})$ in the above construction will be called subcomponents of $X$.
The decomposition of $X$ as the connected sum of subcomponents of above types is called a canonical decomposition of $X$ in this paper.

Since the resulting metric $\hat{g}$ depends on the choice of parameters in the \textmd{M-W} connected sum process, canonical metrics constructed above are not unique.
However, the different choice of parameters in the \textmd{M-W} connected sum process will produce an isotopy between canonical metrics.

The orbifolds admitting a canonical decomposition will naturally appear in the process of Ricci flow with surgery. We will prove that our initial manifold $(M,g)$ admits a canonical decomposition, and $g$ is isotopic to a canonical metric.

In paper \cite{Mar}, Marques also gave the notion of a canonical metric, where he allowed to perform connected sum on the principle sphere $S^{3}$ with itself, and the other subcomponents are spherical three-manifolds.
In dimension four, we have to face with orbifold singularities, and we do not allow to perform connected sums on the principle sphere $S^{4}$ with itself in a canonical metric, and we consider the subcomponents of the form $(S^{3}\times{R})/G$ so that we have a uniform description.




\begin{rem}

Here we note that a canonical metric on $(S^{3}\times\mathbb{R})/G$ (or $S^{4}/\Gamma$ with $\Gamma\neq\{1\}$) is not the same as the standard metric induced from the $(S^{3}\times\mathbb{R},h_{std})$ (or $(S^{4},h_{round})$). The reason is that we always require the presence of the principle sphere in the definition of canonical metrics. However, we will prove that a canonical metric on such orbifolds is always  isotopic to a standard metric in Section \ref{s4}.

\end{rem}

\section{Deforming canonical metrics} \label{s4}

\subsection{Deforming standard metrics}

In this section,  we will give some preliminary results on deforming metrics with positive isotropic curvature.

\begin{prop}\label{p4.2}

Let $(M, g)$ be a compact four orbifold with positive isotropic curvature. Then
the space $\textmd{PIC}(M)\bigcap\{\tilde{g}|\tilde{g}=u^{2}g, u\in C^{\infty}(M), u>0\}$ of metrics is star-shaped, hence contractible.

\end{prop}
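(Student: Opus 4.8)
The plan is to show that the conformal class of $g$ intersected with $\textmd{PIC}(M)$ is star-shaped with respect to $g$ itself, using the transformation formula for $\sigma$ recalled in Section \ref{s2.1}. Recall that for $\tilde g=u^2g$ with $u>0$ we have
\[\sigma_{\tilde g}=u^{-3}(-6\Delta u+\sigma_g u).\]
Thus $\tilde g\in\textmd{PIC}(M)$ (equivalently $\sigma_{\tilde g}>0$, by the discussion in Section \ref{s2.1}) if and only if the function $u$ satisfies the pointwise-positivity condition $-6\Delta u+\sigma_g u>0$ on $M$. Note this condition is \emph{linear} in $u$: if $u_0\equiv 1$ (so $-6\Delta u_0+\sigma_g u_0=\sigma_g>0$ since $g\in\textmd{PIC}(M)$) and $u_1>0$ is any other function with $\tilde g_1=u_1^2 g\in\textmd{PIC}(M)$, then for any $t\in[0,1]$ the convex combination $u_t:=(1-t)+t u_1$ is still strictly positive and satisfies $-6\Delta u_t+\sigma_g u_t=(1-t)\sigma_g+t(-6\Delta u_1+\sigma_g u_1)>0$. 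Hence $u_t^2 g\in\textmd{PIC}(M)$ for all $t$, and $t\mapsto u_t^2 g$ is a continuous path in $\textmd{PIC}(M)\cap\{u^2g\}$ from $g$ to $\tilde g_1$.

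First I would record the standing fact, already established in Section \ref{s2.1}, that on a four-orbifold $\sigma_{g}>0$ is equivalent to positive isotropic curvature, so the set in question is exactly $\{u^2g : u\in C^\infty(M),\ u>0,\ -6\Delta u+\sigma_g u>0\}$; the equation $\sigma_{\tilde g}=u^{-3}(-6\Delta u+\sigma_g u)$ holds verbatim on the orbifold since it is a local computation invariant under the local uniformizing covers. Then I would verify that the map $u\mapsto u^2 g$ identifies this set with a convex-combination-closed subset of positive functions in the manner above, so that "star-shaped with center $g$" follows. Finally, contractibility is immediate: the straight-line homotopy $H(u,t)=(1-t)+tu$ (in the $u$-parametrization) deformation retracts the whole set onto the single point $u\equiv 1$, i.e. onto $g$, and stays inside the set by the same linearity argument; pulling back through $u\mapsto u^2g$ gives a contraction of $\textmd{PIC}(M)\cap\{u^2g\}$ to $g$ within itself, continuous in the $C^\infty$-topology since all operations ($u\mapsto(1-t)+tu$, $u\mapsto u^2 g$) are continuous there.

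There is essentially no hard analytic obstacle here; the only point requiring a little care is bookkeeping on the orbifold. One must check that "$u\in C^\infty(M)$" for an orbifold means a function pulling back to a smooth $\Gamma_x$-invariant function on each local uniformizing chart, that $\Delta$ is the orbifold Laplacian computed in such charts, and that the positivity condition $-6\Delta u+\sigma_g u>0$ is genuinely pointwise on the (compact) underlying space, so that convex combinations of admissible $u$'s remain admissible without any uniform-estimate input. Compactness of $M$ is used only to guarantee that strictly positive continuous functions are bounded below by a positive constant, so that $u_t=(1-t)+tu_1\geq\min\{1,\min u_1\}>0$ throughout; this also ensures the path lies in $\textmd{PIC}(M)$ rather than merely in its closure. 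Once these routine verifications are in place, the star-shapedness, and hence contractibility, of $\textmd{PIC}(M)\cap\{u^2g\mid u>0\}$ follows.
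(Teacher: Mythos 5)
Your proposal is correct and follows essentially the same route as the paper: you use the conformal transformation formula $\sigma_{\tilde g}=u^{-3}(-6\Delta u+\sigma_g u)$, take the linear interpolation $u_t=(1-t)+tu_1$ in the conformal factor, and observe that $-6\Delta u_t+\sigma_g u_t$ is the corresponding convex combination, hence positive. The extra remarks on orbifold smoothness and on compactness guaranteeing $u_t>0$ are harmless elaborations of the same argument.
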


\begin{proof}

Recall that we have defined $\sigma_{g}:=R_{g}-6\max\{\lambda_{\max}(W_{+}), \lambda_{\max}(W_{-})\}$ in Section \ref{s2}, and we know $\sigma_{g}>0$ is equivalent to PIC.

Let $\tilde{g}=u^{2}g, u\in C^{\infty}(M), u>0$, we have:
\[\sigma_{\tilde{g}}=u^{-3}(-6\Delta u+\sigma_{g}u).\]
Denote $u_{\mu}=(1-\mu+\mu{u}), g_{\mu}=u_{\mu}^{2}g, \mu\in[0,1]$, $\sigma_{\mu}=\sigma_{g_{\mu}}$, then $g_{0}=g, g_{1}=\tilde{g}$.
Suppose both $\sigma_{g}$ and $\sigma_{\tilde{g}}$ are positive, we have
\[u_{\mu}^{3}\sigma_{\mu}=\mu(-6\Delta u+\sigma_{g}u)+(1-\mu)\sigma_{g}>0.\]
Hence every $g_{\mu}$ has positive isotropic curvature and we finish the proof.
\end{proof}

\begin{prop}\label{p4.3}

Let $(M, g)$ be a compact locally conformally flat four orbifold with positive scalar curvature.
Suppose $(M,\tilde{g})$ is obtained from an \textmd{M-W} connected sum of $(M, g)$ with a sphere $(S^4,h_{round})$. Then $\tilde{g}$ is isotopic to $g$.

\end{prop}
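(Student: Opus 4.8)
The plan is to show that performing an M-W connected sum with a round sphere $(S^4,h_{round})$ does not change the isotopy class of the metric, by exploiting the fact that $M$ is locally conformally flat with positive scalar curvature, hence (by the discussion in Section \ref{s2.1}) has positive isotropic curvature, so the M-W construction applies. The key observation is Remark \ref{r2.4}: for locally conformally flat metrics there is a competing connected-sum construction, the classical Schoen--Yau/Gromov--Lawson (or Hoelzel) conformal connected sum, which produces a metric $g'$ on $M\#S^4$ that is again conformally flat with positive scalar curvature, and which Remark \ref{r2.4} tells us is isotopic to the M-W connected sum $\tilde g$ as long as the gluing maps are taken in the same mapping class. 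Since $S^3\cong S^3/\{1\}$ has connected diffeomorphism group (or at any rate the relevant gluing maps are isotopic to the identity and the mapping class is forced), this matching of gluing maps can be arranged.

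So the proof reduces to the conformal statement: the conformal connected sum of a locally conformally flat $(M,g)$ with a round $(S^4,h_{round})$ is isotopic, through conformally flat metrics of positive scalar curvature, to $g$ itself. First I would recall the construction of $g'$: one picks small conformally-Euclidean balls around $p\in M$ and around a point $p_0\in S^4$, writes each metric as (conformal factor)$\times$(flat metric) on a punctured ball, and glues the two punctured balls along a neck using an inversion; since $S^4\setminus\{p_1\}$ is conformally $\mathbb{R}^4$ and gluing $\mathbb{R}^4$ to a punctured ball just fills it back in, the resulting conformally flat manifold is diffeomorphic to $M$ and the conformal class is, up to the choice of parameters, that of $g$. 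Concretely: on the $S^4$ side remove a second point so that $S^4\setminus\{p_0,p_1\}$ is conformally the flat cylinder $S^3\times\mathbb{R}$, and the M-W/conformal neck can be taken to be an increasingly long such flat cylinder; then shrinking the neck parameter continuously gives a path of conformally flat PSC metrics from $g'$ back to (a metric conformal to) $g$ on $M$, and Proposition \ref{p4.2} upgrades "conformal to $g$" to "isotopic to $g$" since the space of PIC metrics conformal to a fixed one is contractible. Thus $g' $ is isotopic to $g$, and combining with Remark \ref{r2.4} we get $\tilde g$ isotopic to $g$.

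The main technical point — and the step I would expect to need the most care — is making the "shrink the neck" deformation rigorous as a continuous path of metrics with positive isotropic curvature (equivalently $\sigma>0$), rather than merely positive scalar curvature: one must check that throughout the interpolation the conformal factors can be chosen so that the Micallef--Wang inequality $2k+\tfrac{\alpha(3/2-\alpha)}{r^2}+\tfrac{\alpha'}{r}>0$ (or its conformal analogue controlling $\sigma$) is preserved, using that the lower bound $k$ of the isotropic curvature varies continuously and stays positive on the compact family. Since the defining inequality for $\alpha$ in Section \ref{SECconnsum} depends only on a lower isotropic-curvature bound, and PIC is equivalent to $\sigma_g>0$ with $\sigma_{u^2g}=u^{-3}(-6\Delta u+\sigma_g u)$, this is exactly the kind of estimate the M-W machinery already supplies; the work is just to run it in a one-parameter family and to verify that the endpoint, after collapsing the neck, is genuinely conformal to the original $g$ on $M$ (with the $S^4$-part absorbed as the "filled-in" puncture). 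Once that is in hand, Proposition \ref{p4.2} and Remark \ref{r2.4} close the argument.
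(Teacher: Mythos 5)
Your proposal is correct and follows essentially the same route as the paper: reduce via Remark \ref{r2.4} to the Schoen--Yau/Hoelzel conformal connected sum $g'$, observe that because $S^4$ minus a point is conformally $\mathbb{R}^4$ the metric $g'$ lies in the conformal class of $g$, and invoke Proposition \ref{p4.2} to conclude. The only remark is that your intermediate ``shrink the neck'' deformation is superfluous --- once $g'$ is seen to be conformal to $g$, Proposition \ref{p4.2} already supplies the isotopy, which is exactly how the paper closes the argument.
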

\begin{proof} Let $g'$ be a locally conformally flat metric with positive scalar curvature obtained from the connected sum of $(M,g)$ and $(S^4,h_{round})$ in the sense of \cite{SchYau} and \cite{Hoe} (see Remark \ref{r2.4}). Then $g'$ is conformal to $g,$ hence isotopic to $g$ by Proposition \ref{p4.2}. On the other hand, by Remark \ref{r2.4},  $\tilde{g}$ is isotopic to $g'$, hence to $g$.
\end{proof}
\begin{prop}\label{p4.4}

Let $(M,g)$ be  $S^{4}/G,$ $S^{3}/\Gamma\times_{f}S^{1},$ or $S^{4}/\langle\Gamma,\hat{\tau}_{1}\rangle\#_f S^{4}/\langle\Gamma,\hat{\tau}_{1}\rangle, $ equipped with a standard metric, $(M,\tilde{g})$ be the canonical metric obtained from an \textmd{M-W} connected sum of $(M^{4}, g)$ with a sphere $S^4.$  Then $\tilde{g}$ is isotopic to $g$.
\end{prop}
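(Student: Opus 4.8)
The plan is to reduce everything to Proposition~\ref{p4.3}: I claim that in each of the three cases the standard metric $g$ is locally conformally flat, and then the \textmd{M-W} connected sum with a round sphere is automatically isotopic to $g$.

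First I would record that the relevant model spaces are conformally flat. The round sphere $(S^{4},h_{round})$ is conformally flat via stereographic projection. The cylinder $(S^{3}\times\mathbb{R},h_{std})$ is conformally flat as well: writing the Euclidean metric on $\mathbb{R}^{4}\setminus\{0\}$ in polar form as $dr^{2}+r^{2}g_{S^{3}}$ and substituting $r=e^{t}$ gives $e^{2t}(dt^{2}+g_{S^{3}})$, so the product metric on $S^{3}\times\mathbb{R}$ --- hence $h_{std}$, which differs from it only by a constant rescaling --- is conformal to the flat metric on $\mathbb{R}^{4}\setminus\{0\}$. Local conformal flatness is a local, isometry-invariant property, so it passes to any quotient by a discrete group of isometries, and near an isolated orbifold singularity it lifts to the linear chart $\mathbb{R}^{4}/\Gamma_{x}$.

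Next I would invoke the structure results of Section~\ref{s3}: each of $S^{4}/G$, $S^{3}/\Gamma\times_{f}S^{1}$, and $S^{4}/\langle\Gamma,\hat{\tau}_{1}\rangle\#_{f}S^{4}/\langle\Gamma,\hat{\tau}_{2}\rangle$ equipped with a standard metric is, by construction, isometric to a quotient of $(S^{4},h_{round})$ or of $(S^{3}\times\mathbb{R},h_{std})$ by a discrete group of isometries (cocompact and fixed-point-free in the last two cases). By the previous step $(M,g)$ is then a compact locally conformally flat four-orbifold with at most isolated singularities, and since $g$ has PIC it has positive scalar curvature. Finally, by the definition of canonical metric in Section~\ref{s3}, $\tilde{g}$ is precisely an \textmd{M-W} connected sum of $(M,g)$ with a single principal sphere $(S^{4},h_{round})$, glued at a smooth point of $M$, so that $\#_{f}(S^{4},M)\cong M$; hence Proposition~\ref{p4.3}, applied with this $M$ as the orbifold there, gives that $\tilde{g}$ is isotopic to $g$.

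I expect no genuine obstacle here: the entire content lies in the conformal flatness of $S^{3}\times\mathbb{R}$ and of its discrete quotients (including through the orbifold singularities of $S^{4}/G$), after which the statement is immediate from Proposition~\ref{p4.3}. The only matter requiring a little care is the bookkeeping that a canonical metric on such an $M$ is genuinely an \textmd{M-W} sum of $g$ with exactly one round sphere and that Proposition~\ref{p4.3} applies to the orbifold $S^{4}/G$ verbatim.
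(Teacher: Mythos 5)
Your proposal is correct and matches the paper's argument exactly: the paper likewise observes that standard metrics are locally conformally flat with positive scalar curvature and then cites Proposition~\ref{p4.3}. You have simply spelled out the conformal-flatness verification (stereographic projection for $S^4$, the substitution $r=e^t$ for the cylinder, and descent to quotients) that the paper leaves implicit.
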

\begin{proof} The standard metrics are locally conformally flat with positive scalar curvature. The result follows from Proposition \ref{p4.3}.
\end{proof}
We remark that the Proposition \ref{p4.4} does not claim the following statement: any canonical metric on an orbifold diffeomorphic to one of the above orbifolds in Proposition \ref{p4.4} is isotopic to a standard one. Nevertheless, at least, we can handle the spherical case at present:

\begin{prop}\label{p4.6}

\textbf{Any} canonical metric on an orbifold diffeomorphic to $S^{4}/G$ is isotopic to standard metric.
\end{prop}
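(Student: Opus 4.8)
The plan is to show that for an orbifold $X$ diffeomorphic to $S^{4}/G$ any canonical decomposition of $X$ is forced to consist of the principal sphere together with at most one spherical subcomponent, and then to finish with Proposition \ref{p4.4}. First recall that in a canonical metric every \textmd{M-W} connected sum is performed at smooth points: the points $p_{1},\dots,p_{k},q_{1},\dots,q_{l}$ lie on the smooth round sphere, the $p_{i}'$ lie on the manifolds $M_{i}$, and the $q_{j}'$ are by definition smooth points of $X_{j}$. Consequently the underlying orbifold of a canonical metric is an ordinary (smooth-point) connected sum
\[
X\;\cong\; S^{4}\,\#\,M_{1}\,\#\,\cdots\,\#\,M_{k}\,\#\,X_{1}\,\#\,\cdots\,\#\,X_{l},
\]
where $M_{i}\cong(S^{3}\times\mathbb{R})/G_{i}$ and $X_{j}\cong S^{4}/\Gamma_{j}$ with $\Gamma_{j}\neq\{1\}$. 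Since every connecting sphere is three-dimensional, hence simply connected, passing to orbifold fundamental groups via van Kampen's theorem gives
\[
\pi_{1}^{orb}(X)\;\cong\;\pi_{1}(M_{1})\ast\cdots\ast\pi_{1}(M_{k})\ast\Gamma_{1}\ast\cdots\ast\Gamma_{l}.
\]

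Now assume $X$ is diffeomorphic to $S^{4}/G$. Because $S^{4}$ is simply connected, $\pi_{1}^{orb}(S^{4}/G)\cong G$ is finite, so the free product above is finite. Each $M_{i}$ is a closed manifold whose universal cover $S^{3}\times\mathbb{R}$ is non-compact and simply connected, so $\pi_{1}(M_{i})\cong G_{i}$ is infinite; and each $\Gamma_{j}$ is a nontrivial finite group. Since a free product of groups is finite only when at most one factor is nontrivial, we must have $k=0$ and $l\le 1$. If $l=0$, then $G$ is trivial, $X\cong S^{4}$, and the canonical metric is just $h_{round}$ on the principal sphere, which is a standard metric. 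If $l=1$, then $X\cong S^{4}\#X_{1}\cong X_{1}=S^{4}/\Gamma_{1}$ (so that $G\cong\Gamma_{1}$), and by construction the canonical metric on $X$ is an \textmd{M-W} connected sum of the standard metric $h_{round}$ on $S^{4}/\Gamma_{1}$ with a round sphere $(S^{4},h_{round})$.

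The remaining case $l=1$ is exactly Proposition \ref{p4.4}, applied with $M=S^{4}/\Gamma_{1}$ equipped with its standard metric: the \textmd{M-W} connected sum of a standard metric with a round $S^{4}$ is isotopic to that standard metric. Hence the canonical metric on $X$ is isotopic to $h_{round}$ on $S^{4}/\Gamma_{1}\cong X$, which is a standard metric, and the proposition follows. The only step needing care is the identification of $\pi_{1}^{orb}$ of a smooth-point orbifold connected sum with the corresponding free product; granting this, the statement reduces to the already established Propositions \ref{p4.2}--\ref{p4.4} together with the elementary fact that a nontrivial free product of groups is infinite.
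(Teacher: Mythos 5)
Your proposal is correct and follows essentially the same approach as the paper: both use the van Kampen theorem for orbifolds to conclude that the canonical decomposition of $S^4/G$ has at most one nontrivial subcomponent, and then appeal to Proposition \ref{p4.4}. You have simply supplied the details of the group-theoretic step (computing $\pi_1^{orb}$ as a free product, noting the $\pi_1(M_i)$ are infinite and the $\Gamma_j$ nontrivial finite, and observing that a free product is finite only if at most one factor is nontrivial) that the paper leaves as a one-sentence citation of \cite{BMP}.
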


From the construction of canonical metrics and the van Kampen theorem for orbifolds (see \cite{BMP}), we know in the canonical decomposition of $S^{4}/G$, there is at most one nontrivial standard piece, which is diffeomorphic to $S^{4}/G$ itself. Hence Proposition \ref{p4.6} follows from Proposition \ref{p4.4}.

The following proposition provides another proof of Proposition \ref{p4.6}, it also has its own interest.

\begin{prop}\label{p4.5}

Let $(S^{4}/G,g)$ be an orbifold, where $G$ is a finite subgroup of $O(5)$. Suppose $g$ is locally conformally flat with PIC, then $g$ is isotopic to a constant curvature metric. \end{prop}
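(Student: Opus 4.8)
The plan is to pass to the orbifold universal cover $S^{4}$, where the locally conformally flat structure becomes a classical object, normalize it by Kuiper's theorem, keep track of the deck transformations, and then descend to the quotient. Since $S^{4}$ is simply connected and $G\subset O(5)$ is a finite group, the map $\pi\colon S^{4}\to S^{4}/G$ is the orbifold universal covering with deck group $G$. Set $\tilde{g}:=\pi^{*}g$. Reading this off the orbifold charts of $(S^{4}/G,g)$ — whose total spaces are small $G_{x}$-invariant balls inside $S^{4}$, as in Section~\ref{s2.2} — one checks that $\tilde g$ is a genuinely smooth, $G$-invariant, locally conformally flat metric on all of $S^{4}$, that it has PIC (in particular positive scalar curvature), and that $G$ acts on $(S^{4},\tilde g)$ by isometries. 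By Kuiper's theorem, a compact simply connected locally conformally flat Riemannian manifold is conformally diffeomorphic to the round sphere, so there is a conformal diffeomorphism $\phi\colon(S^{4},\tilde g)\to(S^{4},h_{round})$; the conjugate $G':=\phi G\phi^{-1}$ is then a finite subgroup of the conformal (M\"obius) group $\mathrm{Conf}(S^{4})$.

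The next step is to straighten $G'$. Identifying $\mathrm{Conf}(S^{4})$ with $\mathrm{Isom}(\mathbb{H}^{5})$, the finite group $G'$ has a fixed point in $\mathbb{H}^{5}$ by the Cartan fixed point theorem. Conjugating by a M\"obius transformation $\psi$ of $S^{4}$ realizing an isometry of $\mathbb{H}^{5}$ that carries this fixed point to the center of the ball model, we may assume $G'':=\psi G'\psi^{-1}\subset O(5)$. Put $\Phi:=\psi\circ\phi$; then $\Phi\colon(S^{4},\tilde g)\to(S^{4},h_{round})$ is again a conformal diffeomorphism, now satisfying $\Phi\,G\,\Phi^{-1}=G''\subset O(5)$. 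Thus $\Phi$ is equivariant for the isomorphism $G\to G''$, $\gamma\mapsto\Phi\gamma\Phi^{-1}$, and therefore descends to a diffeomorphism of orbifolds $\bar{\Phi}\colon S^{4}/G\to S^{4}/G''$.

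To conclude, let $\bar h$ denote the constant curvature $1$ metric on $S^{4}/G''$ induced by $h_{round}$ (well defined since $G''\subset O(5)$ preserves $h_{round}$), and put $g_{0}:=\bar{\Phi}^{*}\bar h$. Then $g_{0}$ is a metric of constant curvature $1$ on $S^{4}/G$. Since $\Phi$, hence $\bar\Phi$, is conformal, we have $g=u^{2}g_{0}$ for some positive $u\in C^{\infty}(S^{4}/G)$. Both $g$ and $g_{0}$ have PIC ($g_{0}$ being of constant positive curvature), so Proposition~\ref{p4.2}, applied with $g_{0}$ as the base point, provides the path $g_{\mu}=\bigl((1-\mu)+\mu u\bigr)^{2}g_{0}$, $\mu\in[0,1]$, of PIC metrics joining $g_{0}$ to $g$. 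This is the desired isotopy.

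The deep ingredients — Kuiper's theorem, the Cartan fixed point theorem, and Proposition~\ref{p4.2} — are all off the shelf, so the real work lies in the bookkeeping: verifying that the lifted metric $\tilde g$ is smooth and conformally flat across the preimages of the orbifold points, so that Kuiper's theorem genuinely applies on the smooth manifold $S^{4}$; checking that $\Phi$ is $G$-equivariant and so passes to the quotient; and observing that $S^{4}/G''$ is diffeomorphic to $S^{4}/G$ as orbifolds. I expect the one point demanding care to be the straightening step: pinning down the precise statement that a finite subgroup of $\mathrm{Conf}(S^{4})$ can be conjugated, within $\mathrm{Conf}(S^{4})$, into $O(5)$.
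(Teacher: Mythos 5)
Your proof is correct and takes a genuinely different route. Both arguments begin by lifting $g$ to a $G$-invariant, locally conformally flat PIC metric $\tilde g$ on $S^4$ and citing Kuiper's theorem for a conformal diffeomorphism to the round sphere. The paper then runs the Yamabe flow starting from $\tilde g$, invokes Ye's convergence theorem, and uses uniqueness of solutions to make the flow $G$-equivariant so that it descends to $S^4/G$: the isotopy is the solution path of a parabolic PDE. You instead straighten the deck group itself: since $\mathrm{Conf}(S^4)\cong\mathrm{Isom}(\mathbb{H}^5)$ and the Cartan fixed-point theorem furnishes a fixed point for the finite group in $\mathbb{H}^5$, you conjugate $G$ into a subgroup $G''\subset O(5)$ and obtain a $G$-equivariant conformal diffeomorphism to the round sphere, hence a constant-curvature orbifold metric $g_0$ on $S^4/G$ that is globally conformal to $g$; Proposition~\ref{p4.2} then supplies the isotopy by linear interpolation of conformal factors. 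The trade-off is clear: the paper's argument avoids conformal-group algebra at the cost of invoking Ye's global convergence theorem, while yours is more elementary (no nonlinear PDE, just the star-shapedness of Proposition~\ref{p4.2}) and exhibits the endpoint metric explicitly. The step you rightly flag as needing care --- that a finite subgroup of $\mathrm{Conf}(S^n)$ is conjugate within $\mathrm{Conf}(S^n)$ to a subgroup of $O(n+1)$ --- is standard, following from the identification with $\mathrm{Isom}(\mathbb{H}^{n+1})$ and Cartan's theorem applied to the circumcenter of an orbit; once that is in place, your equivariance and descent bookkeeping goes through as written.
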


\begin{proof}

We pull back  $g$ to a $G$-invariant, locally conformally flat metric $\tilde{g}$ with PIC on $S^{4}$. By the works of Kuiper (\cite{Kui1}), $\tilde{g}$ is conformally equivalent to a constant curvature metric.
Recall that in dimension four, a locally conformally flat metric has positive scalar curvature if and only if it has PIC.
We solve the Yamabe flow on $S^{4}$:
\[\frac{\partial{g(t)}}{\partial{t}}=-(R_{g(t)}-r_{g(t)})g(t),\text{     }g(0)=\tilde{g},\]
where $r_{g(t)}$ is the mean value of the scalar curvature.
Since the Yamabe flow preserves the conformal structure, $g(t)$ is locally conformally flat for every $t$. By the evolution equation of $R_{g}$, one can derive that the positive scalar curvature condition is preserved by the Yamabe flow.
Hence, in this case, the Yamabe flow preserves the positive isotropic curvature condition.
By the result of Ye \cite{Ye} (see also \cite{Bre}), if the initial metric on $S^{n}$ is conformally equivalent to the standard metric, then the Yamabe flow has a global solution which converges exponentially to a metric of constant sectional curvature.
Because the initial metric $\tilde{g}$ is $G$-invariant, by the uniqueness of the solution of the Yamabe flow, Yamabe flow descents to  a flow on the orbifold $S^{4}/G.$  This produces a desired deformation.
\end{proof}

Let $d\theta^{2}$ be a constant curvature metric of scalar curvature $1$ on a spherical manifold $S^{3}/\Gamma$, where $\Gamma$ is a finite subgroup of $SO(4)$ acting freely on $S^{3}$.

\begin{prop}\label{p4.7}

Let $g=dr^{2}+\omega^{2}(r)d\theta^{2}$ be a warped product metric on ${S}^{3}/\Gamma\times(-a,a)$ with PIC and $\omega(r)\equiv1$ on $(-a,-b)\bigcup(b,a),$ $0<b<a.$
Then there exists a continuous path of warped product  metrics $g_{\mu}, \mu\in[0,1]$ with PIC, such that $g_{0}=g$, $g_{1}=dr^{2}+d\theta^{2}$, $g_{\mu}\equiv g$ on ${S}^{3}/\Gamma\times(-a,-b)\cup (b,a).$
In addition, if $g$ is symmetric under reflection, i.e.  $\omega(r)=\omega(-r)$ for $r\in(-a,a)$, then $g_{\mu}$ can also be chosen to be symmetric under reflection.

\end{prop}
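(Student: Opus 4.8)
The plan is to prove Proposition \ref{p4.7} by an explicit deformation of the warping function $\omega$, carried out so that the PIC condition is maintained throughout. First I would write down the curvature of a warped product $g = dr^2 + \omega^2(r)\,d\theta^2$ on $S^3/\Gamma \times (-a,a)$, where $d\theta^2$ has constant sectional curvature $1$ (scalar curvature $6$, not $1$ — I would be careful about the normalization, since the statement says scalar curvature $1$ for $d\theta^2$; in either case the formulas are the standard ones). The sectional curvatures split into two families: the ``radial'' planes containing $\partial_r$ have curvature $-\omega''/\omega$, and the ``spherical'' planes tangent to the $S^3/\Gamma$ factor have curvature $(k_0 - (\omega')^2)/\omega^2$, where $k_0$ is the (constant) sectional curvature of $d\theta^2$. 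One then checks, as in the Micallef--Wang computation recalled in Section \ref{SECconnsum}, that for such a rotationally symmetric metric the isotropic curvatures are controlled by sums of these two types of sectional curvatures; concretely, PIC for a warped product over a $3$-sphere is equivalent to an inequality of the shape
\[
-\frac{\omega''}{\omega} + \frac{k_0 - (\omega')^2}{\omega^2} > 0
\]
(possibly together with the individual positivity $k_0 - (\omega')^2 > 0$, i.e. $|\omega'| < \sqrt{k_0}$), which I would verify by diagonalizing the curvature operator in the $O(4)$-symmetric frame.

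The second step is to deform $\omega$ to the constant function with the correct value (the constant $c$ with $k_0 - 0 = $ the value making $dr^2 + c^2 d\theta^2$, i.e. simply $\omega \equiv 1$ as the statement asks, after rescaling so $d\theta^2$ already has curvature $1$). The natural choice is a linear homotopy $\omega_\mu(r) = (1-\mu)\omega(r) + \mu$, $\mu \in [0,1]$, which satisfies $\omega_0 = \omega$, $\omega_1 \equiv 1$, and is unchanged (equal to $1$) on $(-a,-b) \cup (b,a)$ since $\omega \equiv 1$ there, matching the boundary condition $g_\mu \equiv g$ near the ends. It remains to check the PIC inequality for $\omega_\mu$. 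Plugging in: $\omega_\mu'' = (1-\mu)\omega''$ and $\omega_\mu' = (1-\mu)\omega'$, so
\[
-\frac{\omega_\mu''}{\omega_\mu} + \frac{1 - (\omega_\mu')^2}{\omega_\mu^2}
= \frac{-(1-\mu)\omega''\,\omega_\mu + 1 - (1-\mu)^2(\omega')^2}{\omega_\mu^2}.
\]
The hypothesis gives $-\omega''\omega + 1 - (\omega')^2 > 0$, and since $\omega_\mu = (1-\mu)\omega + \mu \geq \min(\omega,1) > 0$ one can hope the convex-combination structure forces positivity. The clean way to see this is to write the numerator as a combination of the $\mu=0$ and $\mu=1$ cases: at $\mu=1$ the expression is $1 > 0$, at $\mu=0$ it is the hypothesis. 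I would therefore argue that the numerator is, up to positive factors, an affine-in-$\mu$ (or manifestly convex) interpolation between two positive quantities — in the same spirit as the computation in Proposition \ref{p4.2} — and hence positive for all $\mu \in [0,1]$; and likewise $|\omega_\mu'| = (1-\mu)|\omega'| \leq |\omega'| < 1$ if the auxiliary bound is needed. Symmetry under reflection is automatic: if $\omega(-r) = \omega(r)$ then $\omega_\mu(-r) = \omega_\mu(r)$ for every $\mu$.

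The main obstacle I anticipate is the curvature bookkeeping in the first step: one must confirm precisely which combination of the radial and spherical sectional curvatures governs PIC for an $O(4)$-symmetric metric over $S^3/\Gamma$ (in dimension four the curvature operator block decomposition of Section \ref{s2.1} applies, and one needs $a_1 + a_2 > 0$, $c_1 + c_2 > 0$), and then verify that the linear homotopy does not destroy this. If the raw linear interpolation of $\omega$ fails to keep the inequality positive because of a bad cross term (for instance if $\omega''$ changes sign and $\omega_\mu$ is small), the fallback is to first shrink $\omega$ toward $1$ with a monotone reparametrization of the interval, or to interpolate $\log\omega$ rather than $\omega$, or to do it in two stages — first deform $\omega$ on $[-b,b]$ to a standard ``bump'' profile whose PIC inequality is easy, then straighten the bump; but I expect the straightforward linear homotopy, combined with the convexity trick used in Proposition \ref{p4.2}, to suffice. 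The reflection-symmetric refinement requires no extra work beyond observing that every step respects the involution $r \mapsto -r$.
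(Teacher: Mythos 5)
Your reduction of PIC to the scalar inequality $-\omega''\omega + k_0 - (\omega')^2 > 0$ is correct: the warped product $dr^2 + \omega^2 d\theta^2$ is conformal to the cylinder $dv^2 + d\theta^2$ (via $dv = \omega^{-1}dr$), hence locally conformally flat, so PIC reduces to positive scalar curvature, which equals $6(-\omega''/\omega + (k_0-(\omega')^2)/\omega^2)$. But your main step — the linear homotopy $\omega_\mu = (1-\mu)\omega + \mu$ — does \emph{not} preserve this inequality, and the hoped-for convexity is absent. Writing the numerator as
\[
-\omega_\mu''\omega_\mu + k_0 - (\omega_\mu')^2
= (1-\mu)^2\bigl[-\omega''\omega - (\omega')^2\bigr] - \mu(1-\mu)\,\omega'' + k_0,
\]
the first bracket is controlled by the hypothesis, but the cross term $-\mu(1-\mu)\omega''$ has no sign and is not controlled by $-\omega''\omega + k_0 - (\omega')^2 > 0$ when $\omega$ is small. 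Concretely, take $\omega$ with a dip: $\omega(r_0)=0.1$, $\omega'(r_0)=0$, $\omega''(r_0)=5k_0$. Then at $r_0$ the hypothesis reads $-0.5k_0 + k_0 = 0.5k_0 > 0$ (satisfied), but at $\mu = 1/2$ one has $\omega_\mu(r_0)=0.55$, $\omega_\mu''(r_0)=2.5k_0$, giving $-2.5k_0\cdot 0.55 + k_0 = -0.375k_0 < 0$. Interpolating $\log\omega$ instead fails on the same example. The essential point is that shrinking the well ($\omega$ small, $\omega''$ large) and rescaling both $\omega$ and $\omega''$ by $(1-\mu)$ while adding $\mu$ to $\omega$ alone undoes the compensation between $\omega''$ and $\omega$ that makes the original inequality hold.

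The paper avoids this by never linearly interpolating the warping function. It splits the deformation into two stages, each trivially PIC. Stage one is a \emph{conformal} path $g_\mu = \bigl((1-2\mu) + 2\mu\,\omega^{-1}(r)\bigr)^2 g$ for $\mu\in[0,\tfrac12]$, connecting $g$ to $\tilde g = \omega^{-2}g = dv^2 + d\theta^2$ (the flat cylinder); positivity holds along this path by the star-shapedness of the conformal PIC class (Proposition \ref{p4.2}), which is a genuine convexity statement but in the conformal factor $u$, not in $\omega$. Stage two deforms the $dr^2$-coefficient of $\tilde g$ linearly from $\omega^{-2}(r)$ to $1$ while keeping the fiber metric fixed at $d\theta^2$; every metric on this path is isometric to the standard cylinder by a change of the $r$-coordinate, so PIC is automatic. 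Your third ``two stages'' fallback is in the right spirit, but the correct variable to deform in is the conformal factor (and then the $dr$-reparametrization), not the warping function $\omega$ itself.
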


\begin{proof}

Let $\tilde{g}=\omega(r)^{-2}g$, $\upsilon(r)=\int_{0}^{r}\omega^{-1}(x)dx$, then $\tilde{g}=d\upsilon^{2}+d\theta^{2}$, which clearly has positive isotropic curvature.

For $\mu\in[0,\frac{1}{2}]$, let $g_{\mu}=(1-2\mu+2\mu{\omega}^{-1}(r))^{2}g$.
Then $g_{0}=g, g_{\frac{1}{2}}=\tilde{g}$, $g_{\mu}=g$ on $S^{3}/\Gamma\times(-a,-b)$ and $S^{3}/\Gamma\times(b,a)$ for every $\mu\in[0,\frac{1}{2}].$ It is obvious that  $g_{\mu}$ is warped product(not necessarily in the same $(r,\theta)$ coordinates).  From the proof of Proposition \ref{p4.2}, $g_{\mu}$ has positive isotropic curvature for every $\mu\in[0,\frac{1}{2}]$.

For $\mu\in[\frac{1}{2},1]$, let $g_{\mu}=[(2-2\mu)\omega^{-2}(r)+2\mu-1]dr^{2}+d\theta^{2}$. Then $g_{\frac{1}{2}}=\tilde{g}, g_{1}=dr^{2}+d\theta^{2}$, $g_{\mu}=g$ on $S^{3}/\Gamma\times(-a,-b)$ and $S^{3}/\Gamma\times(b,a)$ for every $\mu\in[\frac{1}{2},1]$.
It's obvious that $g_{\mu}$  is warped product and has PIC on $S^{3}/\Gamma\times(-a,a)$ for every $\mu\in[0,1]$.

If $\omega(r)=\omega(-r)$ for $r\in(-a,a)$, then from the above construction $g_{\mu}$ is symmetric with respect to the reflection on the $r$-coordinate for every $\mu\in[0,1]$.
\end{proof}

Let $g=dr^{2}+\omega^{2}(r)d\theta^{2}$ be a warped product metric on $S^{3}/\Gamma\times\mathbb{R}$ with PIC. We assume $\omega(r)$ is invariant under a cocompact discrete isometric subgroup $G$ of $S^{3}/\Gamma\times\mathbb{R}$. That is to say, either $\omega(r)$ is periodic or $\omega(r)$ is periodic and reflectively invariant. By the same argument as in the proof of Proposition \ref{p4.7},
there exists a continuous path of metrics $g_{\mu}, \mu\in[0,1]$, such that $g_{0}=g$, $g_{1}=dr^{2}+d\theta^{2}$, $g_{\mu}$ is warped product and has PIC for all $\mu\in[0,1]$,
and $g_{\mu}$ is also invariant under $G.$  By taking quotient of  $(S^{3}/\Gamma\times\mathbb{R},g_{\mu})$ by $G,$ we obtain:

\begin{prop}\label{p4.8}

If $S^{3}/\Gamma\times_{f}S^{1}$ admits a PIC metric $g$ induced from a warped product  $dr^{2}+\omega^{2}(r)d\theta^{2}$ on $S^{3}/\Gamma\times\mathbb{R}$,
where $\omega$ is a period function satisfying $\omega(r)=\omega(r+2\pi)$,
then there exists a continuous path of PIC metrics $g_{\mu}$, $\mu\in[0,1]$, on $S^{3}/\Gamma\times_{f}S^{1}$ such that $g_{0}=g$, $g_{1}$ is a standard metric.

\end{prop}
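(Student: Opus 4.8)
The plan is to lift the situation to the cylinder $S^{3}/\Gamma\times\mathbb{R}$, run the deformation already carried out in the proof of Proposition \ref{p4.7} in a way that is equivariant under the deck group of the mapping torus, and then take quotients.

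First I would make the covering explicit. Since $g$ is induced from the warped product $\tilde{g}=dr^{2}+\omega^{2}(r)d\theta^{2}$ on $S^{3}/\Gamma\times\mathbb{R}$ with quotient $S^{3}/\Gamma\times_{f}S^{1}$, and the latter is by definition a mapping torus (hence fibers over $S^{1}$ with infinite cyclic fundamental group of the base), the relevant deck group is an infinite cyclic group $G=\langle\hat{f}\rangle$, where $\hat{f}(\theta,r)=(f(\theta),r+2\pi)$ for some $f\in\textmd{Isom}(S^{3}/\Gamma)$ representing the monodromy. Because $f$ is an isometry of the constant curvature metric $d\theta^{2}$ and $\omega(r+2\pi)=\omega(r)$, the map $\hat{f}$ is an isometry of $\tilde{g}$; thus $\tilde{g}$ is $G$-invariant and descends to $g$.

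Next I would apply the two-stage path from the proof of Proposition \ref{p4.7} to $\tilde{g}$: for $\mu\in[0,\tfrac12]$ put $\tilde{g}_{\mu}=(1-2\mu+2\mu\,\omega^{-1}(r))^{2}\tilde{g}$, and for $\mu\in[\tfrac12,1]$ put $\tilde{g}_{\mu}=[(2-2\mu)\omega^{-2}(r)+2\mu-1]\,dr^{2}+d\theta^{2}$. By Propositions \ref{p4.2} and \ref{p4.7}, each $\tilde{g}_{\mu}$ is a warped-product metric with PIC, with $\tilde{g}_{0}=\tilde{g}$ and $\tilde{g}_{1}=dr^{2}+d\theta^{2}$. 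The key point is equivariance: the radial functions $1-2\mu+2\mu\,\omega^{-1}(r)$ and $(2-2\mu)\omega^{-2}(r)+2\mu-1$ are $2\pi$-periodic in $r$ since $\omega$ is, so $\hat{f}^{*}\tilde{g}_{\mu}=\tilde{g}_{\mu}$ for all $\mu$. Hence each $\tilde{g}_{\mu}$ is $G$-invariant, and the whole path descends to a continuous path $g_{\mu}$ of PIC metrics on $S^{3}/\Gamma\times_{f}S^{1}$ with $g_{0}=g$.

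Finally I would identify $g_{1}$. It is the quotient of $(S^{3}/\Gamma\times\mathbb{R},dr^{2}+d\theta^{2})$ by $G=\langle\hat{f}\rangle$ with $\hat{f}(\theta,r)=(f(\theta),r+2\pi)$, which is exactly the case $p_{2}(G)=\mathbb{Z}$ of the discussion in Section \ref{s3}; hence $g_{1}$ is a standard metric on $S^{3}/\Gamma\times_{f}S^{1}$, completing the argument. I do not expect a genuine obstacle here: the only two things requiring a line of care are the equivariance check (which is nothing more than the $2\pi$-periodicity of $\omega$ together with the fact that $f$ is an isometry of $d\theta^{2}$) and matching the endpoint $dr^{2}+d\theta^{2}$ with the definition of a standard metric in Section \ref{s3}; the PIC property along the path is inherited verbatim from Propositions \ref{p4.2} and \ref{p4.7}, as already indicated in the paragraph preceding Proposition \ref{p4.8}.
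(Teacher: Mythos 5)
Your argument is correct and follows essentially the same route the paper takes: the paragraph preceding Proposition \ref{p4.8} explicitly sets up the $G$-invariant deformation on $S^{3}/\Gamma\times\mathbb{R}$ via the proof of Proposition \ref{p4.7} and then descends to the quotient. You have simply made explicit the equivariance check (periodicity of the conformal and warping factors) that the paper leaves implicit.
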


\begin{prop}\label{p4.9}

Suppose $\#_{f}(S^{4}/\langle\Gamma,\hat{\tau}_{1}\rangle,S^{4}/\langle\Gamma,\hat{\tau}_{2}\rangle)$ admits a PIC metric $g$ induced from a warped product metric $dr^{2}+\omega^{2}(r)d\theta^{2}$ on $S^{3}/\Gamma\times\mathbb{R}$, where $\omega$ is a function satisfying $\omega(r)=\omega(-r)$ and $\omega(r)=\omega(r_0-r)$, $r_0\neq0$.
Then there exists a continuous path of PIC  metrics $g_{\mu}$, $\mu\in[0,1]$, on $\#_{f}(S^{4}/\langle\Gamma,\hat{\tau}_{1}\rangle,S^{4}/\langle\Gamma,\hat{\tau}_{2}\rangle)$ such that $g_{0}=g$, $g_{1}$ is a standard metric.

\end{prop}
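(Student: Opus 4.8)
The plan is to mimic the strategy already used for Proposition~\ref{p4.8}, but now the ambient symmetry group on $S^3/\Gamma\times\mathbb{R}$ is the infinite dihedral group $D(\infty)$ generated by the two reflections $\varsigma_1:r\mapsto -r$ and $\varsigma_2:r\mapsto r_0-r$, rather than a translation. First I would set up the geometry: the hypotheses $\omega(r)=\omega(-r)$ and $\omega(r)=\omega(r_0-r)$ say precisely that the warped product $dr^2+\omega^2(r)d\theta^2$ is invariant under the group $G$ generated by $\hat\tau_1:(\theta,r)\mapsto(\tau_1(\theta),-r)$ and $\hat\tau_2:(\theta,r)\mapsto(\tau_2(\theta),r_0-r)$, and the quotient $(S^3/\Gamma\times\mathbb{R})/G$ is exactly $\#_f(S^4/\langle\Gamma,\hat\tau_1\rangle,S^4/\langle\Gamma,\hat\tau_2\rangle)$ with the correct gluing class (as explained in Section~\ref{s3}, and using that diffeomorphisms of $S^3/\Gamma$ are isotopic to isometries by \cite{Mcc}). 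So it suffices to produce a $G$-invariant isotopy on the cover and push it down.

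Next I would run essentially the same two-stage deformation as in the proof of Proposition~\ref{p4.7}. For $\mu\in[0,\tfrac12]$ set $g_\mu=(1-2\mu+2\mu\,\omega^{-1}(r))^2 g$; since $\omega^{-1}(r)$ inherits both symmetries $\omega(r)=\omega(-r)$ and $\omega(r)=\omega(r_0-r)$, the conformal factor $1-2\mu+2\mu\,\omega^{-1}$ is $G$-invariant, so each $g_\mu$ is $G$-invariant, is still a warped product, and has PIC by the computation in the proof of Proposition~\ref{p4.2} (the relevant inequality $u_\mu^3\sigma_\mu=\mu(-6\Delta u+\sigma_g u)+(1-\mu)\sigma_g>0$ only uses positivity of $\sigma$ at the endpoints). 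At $\mu=\tfrac12$ we reach the cylindrical metric $\tilde g=d\upsilon^2+d\theta^2$ in the new coordinate $\upsilon(r)=\int_0^r\omega^{-1}(x)\,dx$; note $\upsilon$ is odd and satisfies $\upsilon(r_0-r)=\upsilon(r_0)-\upsilon(r)$, so $\tilde g$ is still invariant under $G$ (now acting by the induced reflections $\upsilon\mapsto-\upsilon$ and $\upsilon\mapsto\upsilon(r_0)-\upsilon$). For $\mu\in[\tfrac12,1]$ set $g_\mu=[(2-2\mu)\omega^{-2}(r)+2\mu-1]\,dr^2+d\theta^2$, which interpolates from $\tilde g$ (in the $r$-coordinate) to the fixed cylinder $dr^2+d\theta^2$, stays a warped product with PIC, and remains $G$-invariant because the coefficient function is $G$-invariant. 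Taking the quotient by $G$ at every stage gives a continuous path of PIC metrics $g_\mu$ on $\#_f(S^4/\langle\Gamma,\hat\tau_1\rangle,S^4/\langle\Gamma,\hat\tau_2\rangle)$ from $g$ to the standard metric $h_{std}$.

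The one point that needs genuine care — and which I expect to be the main obstacle, though a minor one — is verifying that the limiting cylindrical metric $dr^2+d\theta^2$, viewed with the same $D(\infty)$-action, really descends to a \emph{standard} metric in the sense of Section~\ref{s3} on the \emph{given} orbifold $\#_f(S^4/\langle\Gamma,\hat\tau_1\rangle,S^4/\langle\Gamma,\hat\tau_2\rangle)$, i.e. that the reflection points and the gluing isometry $f$ coming from $\tau_1,\tau_2$ match the prescribed diffeomorphism type. This is where one invokes that $\hat\tau_1,\hat\tau_2$ are orientation-preserving involutions of $S^3/\Gamma$ (as in the $p_2(G/\Gamma)=D(\infty)$ discussion in Section~\ref{s3}), that the deformation fixes the $G$-action throughout, and that the resulting $(S^3\times\mathbb{R})/G$ is, up to the isotopy freedom in $f$ recorded in Section~\ref{s2.3}, the orbifold connected sum we started with. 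A small additional check is that the warping function $\omega$ is bounded away from $0$ and $\infty$ on a fundamental domain (automatic since $\omega$ is continuous and the fundamental domain is compact), so all conformal factors above are smooth and positive. With these remarks the proof is complete.
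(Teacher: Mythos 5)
Your proposal is correct and follows essentially the same route as the paper, which proves Propositions \ref{p4.8} and \ref{p4.9} together by running the two-stage conformal/reparametrization deformation of Proposition \ref{p4.7} on the $G$-invariant warped product over $S^3/\Gamma\times\mathbb{R}$ and pushing down to the quotient. The final ``point that needs genuine care'' is in fact automatic: the group $G=\langle\hat\tau_1,\hat\tau_2\rangle$ and its action never change during the isotopy, so the quotient of $dr^2+d\theta^2$ by $G$ is, by the definitions of Section \ref{s3}, precisely the standard metric on the fixed orbifold $\#_f(S^4/\langle\Gamma,\hat\tau_1\rangle,S^4/\langle\Gamma,\hat\tau_2\rangle)$.
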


\subsection{Deforming  canonical metrics}









\begin{prop}\label{p4.10}

Suppose $g$ is a canonical metric on $S^{4}/\Gamma$ with $\Gamma\subset{SO(4)}$.
We perform \textmd{M-W} connected sum at the two orbifold singularities $p_{1}$ and $p_{2}$ (in the case $\Gamma=\{1\}$, $p_{1}$ and $p_{2}$ are both smooth points) with $f\in\mathrm{Isom}(S^{3}/\Gamma).$ Then the resulting metric $\tilde{g}=\#_{f;p_{1},p_{2}}(g)$ is isotopic to a standard  metric  on $S^{3}/\Gamma\times_{f}S^{1}.$

\end{prop}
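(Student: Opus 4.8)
The plan is to reduce Proposition \ref{p4.10} to the warped-product case covered by Proposition \ref{p4.8}. The canonical metric $g$ on $S^4/\Gamma$ is, by definition, obtained by an M-W connected sum of a principal sphere $(S^4, h_{round})$ with a finite collection of subcomponents, but by the van Kampen theorem for orbifolds (as used already for Proposition \ref{p4.6}) the only subcomponent of $S^4/\Gamma$ that can carry nontrivial topology is a standard piece diffeomorphic to $S^4/\Gamma$ itself; all other subcomponents are spherical pieces absorbed into the principal sphere. Hence, using Proposition \ref{p4.4} (or \ref{p4.6}), $g$ is isotopic to the standard round orbifold metric $h_{round}$ on $S^4/\Gamma$. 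Since the M-W connected sum construction (Proposition \ref{orbconnsum}, Remark \ref{remark2.2}) can be performed continuously along a path of PIC metrics, this isotopy $g\rightsquigarrow h_{round}$ on $S^4/\Gamma$ induces an isotopy $\tilde g = \#_{f;p_1,p_2}(g)\rightsquigarrow \#_{f;p_1,p_2}(h_{round})$ of PIC metrics on $S^3/\Gamma\times_f S^1$, provided the gluing isometry $f$ is held fixed (which we may, since $\mathrm{Isom}(S^3/\Gamma)$ retracts and $f$ is fixed throughout). So it suffices to treat $\tilde g = \#_{f;p_1,p_2}(h_{round})$.

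Next I would make the M-W self-connected-sum of the round orbifold $(S^4/\Gamma, h_{round})$ explicit near the two singular points $p_1, p_2$. Since $h_{round}$ is rotationally symmetric about the axis joining $p_1$ and $p_2$, on the region away from a neighborhood of the two poles it is a warped product $dr^2 + \omega^2(r)\, d\theta^2$ on $S^3/\Gamma\times(-a,a)$ with $\omega$ the usual $\sin$-type profile. The M-W procedure replaces small geodesic balls around $p_1$ and $p_2$ by long cylindrical ends $ds^2 + \rho^2 d\theta^2$ on $S^3/\Gamma\times\mathbb R^+$ and glues the two ends by $f$; by the remark in Section \ref{SECconnsum} all continuous choices of the cutoff parameters produce isotopic metrics, so I may arrange the resulting metric on $S^3/\Gamma\times_f S^1$ to itself be a \emph{warped product} $dr^2 + \omega^2(r)\, d\theta^2$ on $S^3/\Gamma\times\mathbb R$ descending under the $\mathbb Z$-action $(\theta,r)\mapsto(f(\theta), r + 2\pi)$ — i.e. $\omega$ is periodic, $\omega(r)=\omega(r+2\pi)$, and $\omega\equiv\rho$ on the long neck. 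With the metric put in this periodic warped-product form, Proposition \ref{p4.8} applies directly and deforms it through PIC metrics to a standard metric on $S^3/\Gamma\times_f S^1$, completing the proof.

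The main obstacle is the step of arranging the M-W connected sum of $(S^4/\Gamma, h_{round})$ with itself to be globally a periodic warped product compatible with the monodromy $f$. The cap-and-neck construction of Section \ref{SECconnsum} manifestly produces a warped product locally near each singular point and on the inserted cylinder, but one has to check that the conformal factor $u(r)$ and the subsequent cutoff modification can be chosen to depend on $r$ alone (respecting the $S^3/\Gamma$ symmetry) and that the two ends can be truncated at matching cylindrical pieces so that the quotient by the generator $\rho:(\theta,r)\mapsto(f(\theta),r+2\pi)$ is well-defined and smooth; this is where the ``all continuous choices are isotopic'' remark does the real work, letting us slide the parameters to the symmetric configuration. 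Once this is in place everything else is a bookkeeping application of the already-established Propositions \ref{p4.4} and \ref{p4.8} together with the continuity of the M-W construction in families.
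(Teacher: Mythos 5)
Your proposal is correct and follows essentially the same route as the paper: cite Proposition~\ref{p4.6} to reduce to $h_{round}$, use the continuity of the M-W construction (Proposition~\ref{orbconnsum}) to transfer the isotopy across the connected sum, observe that the self-connected-sum of $(S^{4}/\Gamma,h_{round})$ can be taken to be a periodic warped product, and finish with Proposition~\ref{p4.8}. The only real difference is that you spell out why the M-W self-connected-sum of $h_{round}$ is a periodic warped product compatible with the monodromy $f$, whereas the paper simply asserts that $\bar{g}$ is PIC and warped product; your extra care there is warranted and does not change the structure of the argument.
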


\begin{proof} Note that $g$ is isotopic to some $h_{round}$ by Proposition \ref{p4.6}. Let $\bar{g}$ be the metric obtained from \textmd{M-W} connected sum on $(S^{4}/\Gamma,h_{round})$  at the two orbifold singularities $p_{1}$ and $p_{2}$.
By Proposition \ref{orbconnsum}, $\bar{g}$ is isotopic to $\tilde{g}$.
Since $\bar{g}$ is PIC and warped product, by Proposition \ref{p4.8}, it is isotopic to a standard metric on $S^{3}/\Gamma\times_{f}S^{1}$.
The result follows from Proposition \ref{orbconnsum}.
\end{proof}

\begin{prop}\label{p4.11}

Suppose $(X_1,g_1)$ is $S^{4}/\Gamma$ or $S^{4}/\langle\Gamma,\hat{\tau}_1\rangle$, and $(X_2,g_2)$ is also  $S^{4}/\Gamma$ or $S^{4}/\langle\Gamma,\hat{\tau}_2\rangle$, both equipped with canonical metrics.
We perform \textmd{M-W} connected sum on orbifold singularities $p_{1}\in X_1$ and $p_2\in X_2$ with $f\in\textmd{Isom}(S^{3}/\Gamma)$. Then the resulting metric $\tilde{g}=\#_f(g_1,g_2)$ is isotopic to a standard  metric on $S^{4}/\Gamma$, $S^{4}/\langle\Gamma,\tilde{\tau}\rangle$ or $\#_f(S^{4}/\langle\Gamma,\hat{\tau}_1\rangle, S^{4}/\langle\Gamma,\hat{\tau}_2\rangle)$.

\end{prop}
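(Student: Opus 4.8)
The plan is to follow the pattern of the proof of Proposition~\ref{p4.10}: replace $\tilde{g}$ by an isotopic metric obtained by gluing round pieces, recognize it as a rotationally symmetric metric, and then apply Proposition~\ref{p4.5} or Proposition~\ref{p4.9}.

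First I would observe that each of $X_1,X_2$ is diffeomorphic to $S^{4}/G$ for a finite group $G\subset O(5)$ with isolated singularities ($G=\Gamma$ in the $S^{4}/\Gamma$ case, $G=\langle\Gamma,\hat{\tau}_i\rangle$ in the $S^{4}/\langle\Gamma,\hat{\tau}_i\rangle$ case). Hence by Proposition~\ref{p4.6} the canonical metric $g_i$ is isotopic to a standard metric $h_i$ on $X_i$, and each $h_i$ is induced from a round metric. By Proposition~\ref{orbconnsum}, performing the \textmd{M-W} connected sum continuously along these isotopies shows that $\tilde{g}=\#_f(g_1,g_2)$ is isotopic to $\bar{g}:=\#_f(h_1,h_2)$, a \textmd{PIC} metric on $Y:=\#_f(X_1,X_2)$. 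The crucial point is that $\bar{g}$ is a warped product metric: away from its orbifold singularities the round metric on $S^{4}/\Gamma$ (resp.\ $S^{4}/\langle\Gamma,\hat{\tau}_i\rangle$) has the form $dr^{2}+\omega^{2}(r)d\theta^{2}$ over $S^{3}/\Gamma$ (with $\omega$ reflectively symmetric in the second case), the glued point lies on the ``axis'', so the \textmd{M-W} deformation near it is a conformal change by a function of $r$ followed by a modification of the cylindrical end, and both operations preserve the warped-product form; moreover gluing two such cylindrical ends by an isometry $f\in\textmd{Isom}(S^{3}/\Gamma)$ again yields a warped product (absorbing $f$ into the $S^{3}/\Gamma$ coordinate). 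This is exactly the situation used implicitly in the proof of Proposition~\ref{p4.10}.

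Next I would split into three cases. If $X_1=X_2=S^{4}/\Gamma$, then $Y$ is a closed spherical four-orbifold with two isolated singular points of local group $\Gamma$, hence diffeomorphic to $S^{4}/\Gamma$ by the classification recalled in Sections~\ref{s2}--\ref{s3}; since a warped product with constant-curvature fibre is locally conformally flat, $\bar{g}$ is locally conformally flat and \textmd{PIC}, so Proposition~\ref{p4.5} provides an isotopy from $\bar{g}$ to a constant curvature (hence standard) metric on $S^{4}/\Gamma$. If exactly one of $X_1,X_2$ equals $S^{4}/\langle\Gamma,\hat{\tau}_i\rangle$, then $Y$ is a closed spherical four-orbifold with a single isolated singular point, hence diffeomorphic to $S^{4}/\langle\Gamma,\tilde{\tau}\rangle$ for some orientation-preserving involution $\tilde{\tau}$ of $S^{3}/\Gamma$, and again Proposition~\ref{p4.5} applied to the locally conformally flat \textmd{PIC} metric $\bar{g}$ gives an isotopy to a standard metric. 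Finally, if $X_1=S^{4}/\langle\Gamma,\hat{\tau}_1\rangle$ and $X_2=S^{4}/\langle\Gamma,\hat{\tau}_2\rangle$, then $Y=\#_f(S^{4}/\langle\Gamma,\hat{\tau}_1\rangle,S^{4}/\langle\Gamma,\hat{\tau}_2\rangle)$ and $\bar{g}$ is induced from a warped product $dr^{2}+\omega^{2}(r)d\theta^{2}$ on $S^{3}/\Gamma\times\mathbb{R}$ invariant under the two reflections coming from $\hat{\tau}_1$ and $\hat{\tau}_2$, so that $\omega(r)=\omega(-r)$ and $\omega(r)=\omega(r_0-r)$ for some $r_0\neq0$; then Proposition~\ref{p4.9} yields an isotopy from $\bar{g}$ to a standard metric on $Y$. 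Combining with the isotopy from $\tilde{g}$ to $\bar{g}$ completes the proof in all cases.

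The step I expect to require the most care is the assertion in the second paragraph that $\bar{g}=\#_f(h_1,h_2)$ is a globally defined rotationally symmetric metric carrying the stated reflection symmetries; this requires unwinding the \textmd{M-W} construction for the explicit round orbifold metrics, checking that the gluing point is the orbifold singularity in the $S^{4}/\Gamma$ picture and the reflection-compatible point in the $S^{4}/\langle\Gamma,\hat{\tau}_i\rangle$ picture (so the cut-off functions depend only on the radial variable and the involutions $\hat{\tau}_i$ survive the deformation and the gluing). A secondary, more routine point is the topological identification of $Y$ in the three cases, which rests on the classification of spherical four-orbifolds by the number of their isolated singular points (Section~\ref{s3}) together with the fact, from \cite{Mcc}, that every diffeomorphism of $S^{3}/\Gamma$ is isotopic to an isometry.
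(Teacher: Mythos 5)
Your proposal is correct and follows the paper's intended line of argument: the paper's own proof consists of the single sentence ``The proof is similar to Proposition~\ref{p4.10},'' and your argument---isotoping each $g_i$ to a standard metric via Proposition~\ref{p4.6}, passing to the isotopic M-W sum $\bar g=\#_f(h_1,h_2)$ via Proposition~\ref{orbconnsum}, observing that $\bar g$ is a (suitably symmetric) warped product, and finishing with Proposition~\ref{p4.5} in the closed spherical cases and Proposition~\ref{p4.9} in the remaining case---is exactly the natural expansion of that one-line reference, mirroring how the proof of Proposition~\ref{p4.10} uses Propositions~\ref{p4.6}, \ref{orbconnsum}, and \ref{p4.8}.
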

\begin{proof}
The proof is similar to Proposition \ref{p4.10}.
\end{proof}

\begin{prop}\label{p4.12}

Let $g$ be a canonical metric on an orbifold $X$. If we perform \textmd{M-W} connected sum at two smooth points of $X$ by an $f\in\textmd{Isom}(S^{3})$, then the resulting metric is isotopic to an \textmd{M-W} connected sum of $(X,g)$ and some $(S^{3}\times_{f}S^{1},h_{std})$.

\end{prop}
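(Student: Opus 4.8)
The plan is to reduce the statement to the structural description of a canonical metric plus the commutativity (up to isotopy) of disjoint M-W connected sum operations. Recall that a canonical metric $g$ on $X$ is, by definition, an M-W connected sum of a principal sphere $(S^4,h_{round})$ with some subcomponents $(M_i,h_i)$ and $(X_j,\tilde h_j)$; all of these gluings are performed at small disjoint geodesic balls, and Proposition \ref{orbconnsum} guarantees that the whole construction varies continuously and hence is insensitive, up to isotopy, to the location of the gluing balls, the small parameters, and the isometries used (within a fixed mapping class).

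First I would choose, once and for all, a canonical decomposition of $X$ realizing $g$, and fix representatives of the gluing regions. Given two smooth points $x_1,x_2\in X$ at which we want to perform the new M-W connected sum by $f\in\mathrm{Isom}(S^3)$, I would use Proposition \ref{orbconnsum} (more precisely the isotopy-invariance in Remark \ref{remark2.2} and the remark following the M-W construction) to isotope the two gluing balls around $x_1,x_2$ so that they both lie inside the copy of the principal sphere $(S^4,h_{round})$ used in the canonical decomposition, and are disjoint from all the balls already used to attach the subcomponents. This is possible because the principal sphere, minus the finitely many small balls where subcomponents are glued, is connected and contains an open set biholomorphic-data-free where we can freely slide a pair of balls; an ambient isotopy moving $x_1,x_2$ into this region induces an isotopy of the resulting PIC metric by Proposition \ref{orbconnsum}.

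Once both new gluing balls sit inside the principal sphere and are disjoint from the old ones, the two operations — (a) forming the canonical metric $g$ on $X$ by attaching the subcomponents to $S^4$, and (b) performing the new self-connected sum at $x_1,x_2$ — are supported in disjoint regions of $S^4$ and therefore commute up to isotopy, again by the continuity/locality in Proposition \ref{orbconnsum}. Performing (b) first on $(S^4,h_{round})$ at two points produces exactly an M-W connected sum of $(S^4,h_{round})$ with itself; but that is, by Proposition \ref{orbconnsum}, isotopic to an M-W connected sum of $(S^4,h_{round})$ with a standard cylinder piece $(S^3\times_f S^1,h_{std})$ (this identification of ``$S^4$ self-connected sum at two points'' with ``$S^4\,\#\,(S^3\times_f S^1)$'' is the mapping-torus construction of Section \ref{s2.3} applied to the round metric, whose warped-product structure lets Proposition \ref{p4.8} take over if needed). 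Then reattaching the subcomponents to this modified principal piece yields precisely an M-W connected sum of $(X,g)$ with $(S^3\times_f S^1,h_{std})$, and tracking the isotopies through Proposition \ref{orbconnsum} gives the claim.

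The main obstacle I expect is the first step: rigorously arranging that the two new gluing balls can be isotoped into the principal sphere away from all subcomponent-attaching regions, and that this rearrangement is realized by an isotopy of PIC metrics rather than merely a diffeomorphism. This requires care because ``being inside the principal sphere'' is a feature of the chosen decomposition, not an intrinsic one; one must use van-Kampen-type connectivity of $X$ minus the subcomponent necks together with the fact (Remark \ref{remark2.2}, Proposition \ref{orbconnsum}) that moving the connected-sum points continuously produces a continuous family of PIC metrics. Everything after that is a formal manipulation of commuting M-W sums.
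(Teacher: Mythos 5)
Your strategy---move the two gluing points into the principal sphere via Proposition~\ref{orbconnsum}, recognize the self-connected sum of the round $S^4$ as a warped product on $S^3\times_f S^1$, deform it, and propagate by continuity of the \textmd{M-W} construction---is the same as the paper's proof, which cites Propositions~\ref{p4.10} and~\ref{orbconnsum}. One small correction: the middle isotopy you attribute to Proposition~\ref{orbconnsum} (``self-connected sum of $(S^4,h_{round})$ is isotopic to an \textmd{M-W} sum of $(S^4,h_{round})$ with $(S^3\times_f S^1,h_{std})$'') is not a consequence of that continuity statement alone; it is supplied by Proposition~\ref{p4.10}, or equivalently by Proposition~\ref{p4.8} (to deform the warped product to the standard metric on $S^3\times_f S^1$) together with Proposition~\ref{p4.4} (to exhibit the standard metric as an \textmd{M-W} sum with $h_{round}$), after which Proposition~\ref{orbconnsum} merely propagates the deformation through the attached subcomponents.
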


\begin{proof}

By Proposition \ref{orbconnsum}, we can assume  those two smooth points where connected sum is performed are on the principal sphere of $X$. The result follows from Proposition \ref{p4.10} and Proposition \ref{orbconnsum}.
\end{proof}

The following theorem is the main result of this section:

\begin{prop} \label{p4.13}

Suppose $(X_{1},g_{1}),\ldots,(X_{l},g_{l})$ are orbifolds endowed with canonical metrics. If $(X,g)$ is the resulting Riemannian orbifold by performing \textmd{M-W} connected sums between them (we allow performing connected sums on some $X_{i}$ with itself), then $g$ is isotopic to a canonical metric on $X$.

\end{prop}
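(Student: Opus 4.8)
The plan is to reduce the general assertion to the basic building blocks already established in Propositions \ref{p4.10}--\ref{p4.12}, using an induction on the number of connected-sum operations performed in the construction of $(X,g)$. Recall that a canonical metric on each $X_i$ is, by definition, itself obtained as an \textmd{M-W} connected sum of a principal sphere $(S^4,h_{round})$ with a collection of subcomponents of the form $(S^3\times\mathbb{R})/G$ and $S^4/\Gamma$ (with standard metrics). So the total object $(X,g)$ is an \textmd{M-W} connected sum of several principal spheres $(S^4_{(1)},h_{round}),\ldots,(S^4_{(l)},h_{round})$ together with all the subcomponents, glued along a family of gluing maps $f$; our goal is to rearrange these connected sums so that only \emph{one} principal sphere survives.

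First I would record the two reduction moves supplied by the earlier results. Move (A): by Proposition \ref{orbconnsum} any connected sum that is performed at a point lying in a subcomponent $Y$ of the form $S^4/\Gamma$, $S^3/\Gamma\times_f S^1$, or $\#_f(S^4/\langle\Gamma,\hat\tau_1\rangle,S^4/\langle\Gamma,\hat\tau_2\rangle)$, can be slid (up to isotopy of the resulting metric) onto the principal sphere of $Y$; and by Propositions \ref{p4.10}, \ref{p4.11}, \ref{p4.12}, performing \textmd{M-W} connected sums among such standard pieces and a principal sphere again yields, up to isotopy, a single standard piece attached to a single principal sphere. Move (B): by Proposition \ref{p4.4} (equivalently Proposition \ref{p4.6}/\ref{p4.3}), an \textmd{M-W} connected sum of a principal sphere with a principal sphere is isotopic to just a principal sphere, so two principal spheres joined by a neck can be amalgamated into one. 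Using Move (A) repeatedly I can first push every connected-sum locus off the subcomponents and onto principal spheres, normalize each subcomponent to a standard piece attached to its own principal sphere, and then using Move (B) collapse the resulting chain/tree of principal spheres down to a single principal sphere; at every step Proposition \ref{orbconnsum} guarantees the intermediate metrics stay PIC and vary continuously, so the composite deformation is an isotopy. What remains is a single principal sphere $(S^4,h_{round})$ with standard subcomponents $(S^3\times\mathbb{R})/G_i$ and $S^4/\Gamma_j$ \textmd{M-W}-attached at distinct points — which is exactly a canonical metric on $X$.

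The induction is then organized on the number $N$ of connected-sum operations in the construction of $(X,g)$: the base case $N=0$ is a single canonical $X_i$, which is already canonical; for the inductive step, isolate the last connected sum, apply the inductive hypothesis to the two (or one) pieces being joined to put each in canonical form, and then apply the appropriate instance of Moves (A) and (B) above to the single combined connected sum. One technical point to be careful with is the \emph{orientation and mapping-class bookkeeping}: the gluing maps $f\in\textmd{Isom}(S^3/\Gamma)$ must be tracked through each slide, and I will invoke \cite{Mcc} (any diffeomorphism of $S^3/\Gamma$ is isotopic to an isometry) together with the convention in Section \ref{s2.3} that the diffeomorphism type depends only on the isotopy class of $f$, so the topological identification $X\cong\#(S^4;(M_i),(X_j))$ is consistent with whatever canonical decomposition we end up with.

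The main obstacle, as in the three-dimensional analogue of \cite{Mar}, will be Move (B) — showing that two principal spheres joined by an \textmd{M-W} neck can be merged without destroying PIC or leaving the canonical class. This is precisely where local conformal flatness of the round $S^4$ and the Yamabe/conformal-deformation argument enter (Propositions \ref{p4.3}, \ref{p4.5}, \ref{p4.6}): one uses that the connected sum of two round spheres is conformally flat with positive scalar curvature, hence PIC, hence isotopic through conformal metrics to a single round sphere. Compared with dimension three, the extra subtlety is that these "spheres" may carry orbifold points from the $S^4/\Gamma$ subcomponents, so one must be sure the amalgamation is performed at genuine smooth points of the principal spheres (which it is, by construction) and that sliding a connected-sum locus through a subcomponent does not create or collide with an orbifold singularity — this is handled by Proposition \ref{orbconnsum} applied to the explicitly warped-product standard metrics on the subcomponents.
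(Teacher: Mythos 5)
Your proposal follows essentially the same reduction scheme as the paper's own proof: induct down to a single gluing operation, slide smooth gluing loci onto principal spheres via Proposition \ref{orbconnsum}, merge singularly-joined spherical pieces into a single standard piece via Propositions \ref{p4.10}--\ref{p4.11}, and amalgamate the resulting principal spheres via Propositions \ref{p4.3}/\ref{p4.4}. The paper organizes this as a four-case analysis (smooth vs.\ singular gluing points, in the same vs.\ distinct $X_i$) after reducing to $l=2$, whereas you package it as two iterated ``Moves'' with an induction on the number of connected-sum operations; the substance and the cited propositions are the same.

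One sentence in Move (A) is stated too strongly and, taken literally, would fail: you say you can ``push every connected-sum locus off the subcomponents and onto principal spheres.'' That sliding is available only at \emph{smooth} gluing points. A connected sum performed at a singular point of a subcomponent $S^4/\Gamma$ cannot be slid onto the principal sphere, since the principal sphere carries no point with local group $\Gamma$. What happens instead — as your subsequent invocation of Propositions \ref{p4.10} and \ref{p4.11} indicates you intend — is that the two subcomponents joined along that singular neck are first merged into a single standard piece (attached to the ambient principal sphere at a smooth point), and only then does the sliding step apply. This is exactly cases ii) and iv) of the paper's $l=2$ analysis. So the proposal is substantively correct; just re-order the steps so the smooth vs.\ singular distinction is maintained (merge first, slide second).
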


\begin{proof}

By induction on $l,$ we only need to consider $l=2$ case. Suppose \textmd{M-W} connected sum is performed at two points $p_1\in X_1,$ $p_2\in X_2$ or $X_1.$ We divide our argument into 4 cases:\\
\text{}\ \ \ i) $p_1\in X_1,$ $p_2\in X_2$ and $p_1,p_2$ are smooth,\\
\text{}\ \ \ ii) $p_1\in X_1,$ $p_2\in X_2$ and $p_1,p_2$ are singular,\\
\text{}\ \ \ iii) $p_1,p_2\in X_1$ and $p_1,p_2$ are smooth,\\
\text{}\ \ \ iv) $p_1,p_2\in X_1$ and $p_1,p_2$ are singular.

Case i): By Proposition \ref{orbconnsum}, we may assume $p_1$ and $p_2$ lie in the principle spheres.  Note that by the same argument of Proposition \ref{p4.11},  \textmd{M-W} connected sum of two spheres is isotopic to one sphere.  From this fact and Proposition \ref{orbconnsum},  we can merge the two principal spheres gradually  to  one sphere and this simultaneously produces an isotopy of $g$ on $X$ to some canonical metric.

Case ii): Let $p_1$($p_2$) lie in some subcomponent $SX_1$ ($SX_2$). Let $(X_1',g_1')$ ($(X_2',g_2')$) be the orbifold with a canonical metric  such that $X_1$ is an \textmd{M-W} connected sum of $SX_1$ ($SX_2$) with $X_{1}'$($X_2'$) at its principle sphere. In this case $SX_{1}$ and $SX_{2}$ are spherical orbifolds of the form $S^{4}/\Gamma$ or $S^{4}/\langle\Gamma,\hat{\tau}\rangle$ with $\Gamma\subset{SO(4)}$.
By Proposition \ref{p4.12} an \textmd{M-W} connected sum of  $SX_{1}$ and $SX_{2}$ at their singular points is isotopic to a standard metric on  $X_3=\#_f(SX_{1},SX_{2})$, where the possible topological type of $X_3$ is $S^{4}/\Gamma$, $S^4/\langle \Gamma,\hat{\tau}\rangle$ or $\#_{f}(S^4/\langle \Gamma,\hat{\tau_1}\rangle,S^4/\langle \Gamma,\hat{\tau_2}\rangle)$.
This isotopy shows that the metric $g$ is isotopic to an \textmd{M-W} connected sum between a \textbf{canonical metric} on $X_1'\# X_3$ and a canonical metric on $X_2'$ at \textbf{smooth} points.  This reduces to the  case i).

Case iii): This case has been treated by Proposition \ref{p4.12}.

Case iv): If $p_1$ and $p_2$ lie in one subcomponent $SX_1$, then $SX_1$ can only has the form $S^{4}/\Gamma$, with $\Gamma\subset{SO(4)}.$  By Propositions \ref{orbconnsum} and \ref{p4.8}, $g$ is isotopic to a canonical metric obtained as an \textmd{M-W} connected sum of a canonical metric on $X_1'$ and $S^3/\Gamma\times_f S^2$ at the principle sphere.
If $p_1$ and $p_2$ lie in different subcomponents, by similar argument as in case ii), these two subcomponents can merge together to give a standard piece. Our metric $g$ is isotopic to an \textmd{M-W} connected sum of a canonical metric on $X_{3}$ at two smooth points. This reduces to case iii).
\end{proof}

\section{Surgery} \label{s5}

Ricci flow with surgery was introduced first by R. Hamilton in \cite{Ham2}, where the author gave a detailed description on  how to do surgery on Ricci flow.  We need  one crucial property of this construction, namely, preserving positivity of the curvature operator.

Consider the standard round cylinder $S^{3}\times\mathbb{R}$ with the metric $h_{std}$ of scalar curvature $1$. Denote the coordinate of the second factor by $s$.
Let $f$ be a smooth function defined by
\begin{equation}\label{5.1}
f(s)=
    \left\{
    \begin{array}{ll}
        0,&s\leq0,\\
        ce^{-\frac{q}{s}},&s>0,
    \end{array}\right.
\end{equation}
where $c>0,q>0.$

Suppose $h$ is a metric on $N=S^{3}\times(-4,4),$ which is $\epsilon$-close to $h_{std}$ in $C^{[\frac{1}{\epsilon}]}$-topology. Let  $\hat{h}=e^{-2f}h$ be a new metric. Throughout the paper, without further indications,  $\epsilon$-close is always understood in $C^{[\frac{1}{\epsilon}]}$-topology.

\begin{prop}\label{p5.1}

There exist positive constants $c_{0}$, $q_{0}$ and $\epsilon_{2}$ with the following property. For any  $c<c_{0}$ and $q>q_{0}$ in (\ref{5.1}), and $0<\epsilon<\epsilon_{2},$ if
  $h$ is a metric on $N=S^{3}\times(-4,4)$ which is $\epsilon$-close to $h_{std}$ and has \textbf{positive curvature operator},
then the  metric $\hat{h}=e^{-2f}h$ also has \textbf{positive curvature operator}.

\end{prop}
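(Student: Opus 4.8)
The plan is to work in a conformally normal coordinate description and verify the curvature‐operator positivity directly from the conformal change formula. Writing $\hat h = e^{-2f} h$ with $f = f(s)$ supported on $\{s>0\}$ and depending only on the cylinder coordinate, the key point is that both $f$ and all its derivatives can be made uniformly small on $N=S^3\times(-4,4)$ by choosing $c<c_0$ small and $q>q_0$ large: indeed $\sup_s |f^{(k)}(s)| \le C_k\, c\, q^{-1}$ for each fixed $k$, since the factor $e^{-q/s}$ and its $s$-derivatives are bounded by constants times $c$ (the polynomial factors in $1/s$ coming from differentiation are dominated by the exponential as $q\to\infty$, uniformly on the relevant range). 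So $\hat h$ is, say, $2\epsilon$-close to $h_{std}$ once $c_0,\epsilon_2$ are small and $q_0$ large, and more importantly, the conformal \emph{perturbation} of the curvature operator has pointwise norm $\le C\,(|{\nabla^2 f}| + |\nabla f|^2) \le C'\, c\, q^{-1}$.

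Next I would recall the standard formula for how the curvature operator $\mathcal R$ (viewed as a symmetric bilinear form on $\Lambda^2$) transforms under a conformal change $\hat h = e^{2\varphi} h$: schematically $\mathcal R_{\hat h} = e^{2\varphi}\bigl(\mathcal R_h + \varphi \owedge (\nabla^2\varphi - d\varphi\otimes d\varphi + \tfrac12|\nabla\varphi|^2 h)\bigr)$, where $\owedge$ is the Kulkarni–Nomizu product, which is a bounded bilinear operation. Here $\varphi = -f$. On the round cylinder $h_{std}$ the curvature operator is nonnegative (it is $\ge 0$ with a two–dimensional kernel spanned by the $\mathbb R$–directions), and since $h$ is $\epsilon$-close to $h_{std}$ with \emph{positive} curvature operator, we have $\mathcal R_h \ge \lambda\, \mathrm{Id}$ on $\Lambda^2$ for some $\lambda = \lambda(h) > 0$; the whole content of the statement is that after subtracting the conformal correction term we keep positivity. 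The subtlety — and this is where the care of \cite{Ham2} enters — is that $\lambda$ is \emph{not} bounded below independently of $h$ (it can be tiny), so one cannot simply absorb the correction by smallness of $c/q$ alone. Instead one uses the structure: the smallest eigenvalues of $\mathcal R_h$ occur on the (near-)kernel directions of $\mathcal R_{h_{std}}$, i.e. the 2-forms $ds\wedge e_i$ for $e_i$ tangent to the $S^3$ factor, and on exactly those directions the conformal correction built from $f(s)$ is \emph{positive}. Concretely, for a unit 2-vector close to $\partial_s\wedge e_i$, the term $-f\owedge(\nabla^2 f - df\otimes df + \tfrac12|f'|^2 h)$ evaluated there is $f''(s) + (\text{lower order})$, and on $\{s>0\}$ one computes $f'' = f\cdot(q^2 s^{-4} - 2q s^{-3}) = f\, q\, s^{-4}(q - 2s)$, which is positive for $q$ large (since $s<4$). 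Thus on the bad directions the correction helps, while on the directions where $\mathcal R_{h_{std}}$ is uniformly positive the correction, being of size $O(c/q)$, is harmlessly dominated.

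The main obstacle, then, is making this dichotomy quantitative and uniform: one must show that there is a single splitting of $\Lambda^2$ (depending continuously on the point) into a "good cone" where $\mathcal R_h$ has eigenvalues bounded below by a definite constant $c_1>0$ independent of $h$ (coming from $\epsilon$-closeness to $h_{std}$, whose nonzero spectrum is bounded away from $0$), and a complementary "bad cone" near $\mathrm{span}\{ds\wedge e_i\}$ on which the conformal correction is positive of a definite size on $\{s\ge s_0\}$ and where, on $\{0<s<s_0\}$ together with $\{s\le 0\}$, $f$ and its derivatives are so small (exponentially in $1/s_0$, plus the overall $c$) that $\mathcal R_h$ itself, being $\epsilon$-close to the nonnegative $\mathcal R_{h_{std}}$ and having positive curvature operator, already carries the day. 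Assembling these estimates — choosing $s_0$ first to control the transition region, then $q_0$ large to make $f''>0$ where needed, then $c_0,\epsilon_2$ small to control everything else — yields constants $c_0,q_0,\epsilon_2$ with the asserted property. I would present this by first stating the conformal transformation formula as a lemma, then the two eigenvalue estimates (good cone / bad cone), and finally combining them; the routine but lengthy part is the explicit bookkeeping of the Kulkarni–Nomizu terms on $S^3\times\mathbb R$, which I would relegate to a direct computation referencing \cite{Ham2}.
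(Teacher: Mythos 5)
Your proposal is essentially the paper's own argument: both split $\Lambda^2$ pointwise into the near-kernel directions $\partial_s\wedge e_i$ of the round cylinder, where the Hessian contribution $f''(s)=(q^{2}/s^{4}-2q/s^{3})f>0$ adds positivity, and the complementary directions, where $\mathcal R_{h_{std}}$ is bounded below by a universal constant (about $1/6$) and the conformal correction is a small perturbation absorbed by it. The one superfluous element is your transition region $\{0<s<s_0\}$: since $f''\ge \tfrac12(q^{2}/s^{4})f$ once $q_{0}\gg 16$, the $O(\epsilon)$ error terms of size $(q^{2}/s^{4})f\,O(\epsilon)$ are dominated uniformly over $s\in(0,4]$, and the strict positivity $\lambda>0$ of the small block of $\mathcal R_h$ then closes the estimate with no case split in $s$ needed.
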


Here we remark that the property of $\hat{h}$ stated in Proposition \ref{p5.1} was  not used in the usual theory on four-dimensional Ricci flow \cite{Ham2} \cite{CZh1} \cite{CTZh}.  The result in dimension three appeared in \cite{MT}. Since there is no further literature for four dimensional case, we give  the details for completeness.

\begin{proof} [Proof of Proposition \ref{p5.1}]

First we fix the constant $q_{0}\gg16$ such that $\frac{q^{2}}{s^{4}}e^{-\frac{q}{s}}\ll 1$ for every $q>q_{0}$ and $s\in(0,4]$. We will fix $c_{0}$ to be a small constant, such that for every $c<c_{0}$ and $q>q_{0}$,  \[\frac{q}{s^{2}}f^{2}\ll\frac{q^{2}}{s^{4}}f^{2}\ll\frac{q}{s^{2}}f\ll1,\forall s\in(0,4].\]

Let $\{E_{0},E_{1},E_{2},E_{3}\}$ be an orthonormal basis for the tangent space at a point $(x,s)\in{N}=S^{3}\times(-4,4)$ with respect to the metric $h_{std}$ such that $E_{0}$ points to the positive $s$-direction.
We denote the covariant derivative with respect to ${h}$ by ${\nabla}$,
and denote the matrix of the curvature operator of ${h}$ with respect to this basis by $\mathcal{{R}}=({R}_{ijkl})$.
Since $h$ is $\epsilon$-close to $h_{std}$, at a point $(x,s)\in N$, $h_{ij}=\delta_{ij}+O(\epsilon)$, and the Christoffel symbols $\Gamma^{i}_{jk}$ of $h$ are within $\epsilon$ in the $C^{[\frac{1}{\epsilon}]-1}$-topology of those of $h_{std}$. Therefore, with respect to the basis $\{E_{0},E_{1},E_{2},E_{3}\}$, $f_{0}=\frac{q}{s^{2}}f$ and $f_{i}=0$ for $1\leq i\leq3$, and
\[|\nabla f|_{h}=\frac{q}{s^{2}}f(1+O(\epsilon)),\]
\[f_{00}=(\frac{q^{2}}{s^{4}}-\frac{2q}{s^{3}})f+\frac{q}{s^{2}}fO(\epsilon),\]
\[f_{i0}=\frac{q}{s^{2}}fO(\epsilon),\text{ for }1\leq i\leq3,\]
\[f_{ij}=\frac{q}{s^{2}}fO(\epsilon),\text{ for }1\leq i,j\leq3,\]
where $f_{i}$ means $\nabla_{i}f$, $f_{ij}=(\mathrm{Hess}f)_{ij}=\partial_{i}f_{j}-f_{k}\Gamma^{k}_{ij}$ is the Hessian of $f$ with respect to the metric $h$.

Let's consider $\hat{h}=e^{-2f}h$. 
We denote by $\mathcal{\hat{R}}=(\hat{R}_{ijkl})$ the matrix of the curvature operator of $\hat{h}$ with respect to the basis $\{E_{0},E_{1},E_{2},E_{3}\}$.

The following formula for the curvature of a conformally changed metric is well known:
\[
\begin{split}
\hat{R}_{ijkl}=e^{-2f}[R_{ijkl}-f_{j}f_{k}h_{il}+f_{j}f_{l}h_{ik}+f_{i}f_{k}h_{jl}-f_{i}f_{l}h_{jk}\\
-(\wedge^{2}h)_{ijkl}|\nabla{f}|^{2}-f_{jk}h_{il}+f_{ik}h_{jl}+f_{jl}h_{ik}-f_{il}h_{jk}],
\end{split}
\]
where $(\wedge^{2}h)_{ijkl}=h_{ik}h_{jl}-h_{il}h_{jk}$.
Hence, if $1\leq i,j,k,l\leq3$, we have
\[
\hat{R}_{0i0j}=e^{-2f}[R_{0i0j}+(\frac{q^{2}}{s^{4}}-\frac{2q}{s^{3}})fh_{ij}+\frac{q}{s^{2}}fO(\epsilon)],
\]
\[
\hat{R}_{ijk0}=e^{-2f}[R_{ijk0}+\frac{q}{s^{2}}fO(\epsilon)],
\]
\[
\hat{R}_{ijij}=e^{-2f}[R_{ijij}-\frac{q^{2}}{s^{4}}f^{2}+\frac{q}{s^{2}}fO(\epsilon)],
\]
\[
\hat{R}_{ijkl}=e^{-2f}[R_{ijkl}+\frac{q}{s^{2}}fO(\epsilon)] \text{ (for }\{i,j\}\neq\{k,l\}).
\]

Therefore, with respect to the basis $\mathcal{B}_0=$
$\{E_{0}\wedge{E_{1}},E_{0}\wedge{E_{2}},E_{0}\wedge{E_{3}},E_{1}\wedge{E_{2}},E_{1}\wedge{E_{3}},E_{2}\wedge{E_{3}}\}$, the (0,4) curvature tensor $\mathcal{\hat{R}},$ regarded as a quadratic form on $\bigwedge^2,$ has the following form:
\[
e^{-2f}\left[\mathcal{R}+\begin{pmatrix} (\frac{q^{2}}{s^{4}}-\frac{2q}{s^{3}})f\mathbf{I} & \mathbf{O} \\ \mathbf{O} & -\frac{q^{2}}{s^{4}}f^{2}\mathbf{I} \end{pmatrix}+\left(\frac{q}{s^{2}}fO(\epsilon)\right)\right],
\]
where $\mathbf{O}$ and $\mathbf{I}$ are the zero and  $3\times3$ identity matrix respectively.

Since $h$ is $\epsilon$-close to $h_{std}$, there exists a basis $\{F_{0},F_{1},F_{2},F_{3}\}$ which is orthonormal with respect to the metric $h$ such that:\\
\text{}\ \ \ i) the matrix of basis change from $\{E_{0},E_{1},E_{2},E_{3}\}$ to $\{F_{0},F_{1},F_{2},F_{3}\}$ is  $\mathbf{I}+(O(\epsilon)),$\\
\text{}\ \ \ ii) the curvature $\mathcal{R}$ with respect to the basis $\mathfrak{B}_{1}=\{F_{0}\wedge{F_{1}},F_{0}\wedge{F_{2}},F_{0}\wedge{F_{3}},F_{1}\wedge{F_{2}},F_{1}\wedge{F_{3}},F_{2}\wedge{F_{3}}\}$ of $\bigwedge^{2}TN$ is
\[
\mathcal{R}=
\begin{pmatrix} \mathbf{O} & \mathbf{O} \\ \mathbf{O} & \frac{1}{6}\mathbf{I} \end{pmatrix}+(O(\epsilon)).
\]
 Moreover, we may find a new orthogonal basis $\mathfrak{B}_{2}$ of $\bigwedge^{2}TN$ such that the matrix of basis change from $\mathfrak{B}_{2}$ to $\mathfrak{B}_{1}$ is a $6\times6$ matrix $\mathcal{A}_{1}=\mathbf{I}+O(\epsilon)$ and
\[
\mathcal{A}_{1}^{t}\mathcal{R}\mathcal{A}_{1}=
\begin{pmatrix} \mathbf{B_{1}} & \mathbf{O} \\ \mathbf{O} & \mathbf{B_{2}} \end{pmatrix},
\]
with $\mathbf{B_{1}}=(O(\epsilon))$, $\mathbf{B_{2}}=\frac{1}{6}\mathbf{I}+(O(\epsilon))$.  Denote the smallest eigenvalue of $\mathbf{B_{1}}$ by $\lambda$, which is also the smallest eigenvalue of the matrix $\mathcal{A}_{1}^{t}\mathcal{R}\mathcal{A}_{1}$. Obviously, $\lambda=O(\epsilon)$.
On the other hand, with respect to the basis $\mathfrak{B}_{2}$, $\mathcal{\hat{R}}$ is of the form
\[
e^{-2f}\left[
\begin{pmatrix} \mathbf{B_{1}} & \mathbf{O} \\ \mathbf{O} & \mathbf{B_{2}} \end{pmatrix}
+\begin{pmatrix} (\frac{q^{2}}{s^{4}}-\frac{2q}{s^{3}})f\mathbf{I} & \mathbf{O} \\ \mathbf{O} & -\frac{q^{2}}{s^{4}}f^{2}\mathbf{I} \end{pmatrix}
+\left(\frac{q^{2}}{s^{4}}fO(\epsilon)\right)\right]=e^{-2f}\mathbf{A}.
\]
 For a vector $\mathbf{x}=\begin{pmatrix} \mathbf{x_{1}} \\ \mathbf{x_{2}} \end{pmatrix}$ with $\mathbf{x_{1}},\mathbf{x_{2}}\in\mathbb{R}^{3}$, if $\epsilon$ is small enough, we have
\[
\begin{split}
\mathbf{x^{T}}\mathbf{A}\mathbf{x}&=\mathbf{x_{1}^{T}}\mathbf{B_{1}}\mathbf{x_{1}}+\mathbf{x_{2}^{T}}\mathbf{B_{2}}\mathbf{x_{2}}+[(\frac{q^{2}}{s^{4}}
-\frac{2q}{s^{3}})f]\mathbf{x_{1}^{T}x_{1}}-(\frac{q^{2}}{s^{4}}f^{2})\mathbf{x_{2}^{T}x_{2}}+(\frac{q^{2}}{s^{4}}fO(\epsilon))\mathbf{x^{T}x}\\
&\geq[\lambda+(\frac{q^{2}}{s^{4}}-\frac{2q}{s^{3}})f-\frac{q^{2}}{s^{4}}fO(\epsilon)]\mathbf{x_{1}^{T}x_{1}}
+[\frac{1}{6}-O(\epsilon)-\frac{q^{2}}{s^{4}}f^{2}-\frac{q^{2}}{s^{4}}fO(\epsilon)]\mathbf{x_{2}^{T}x_{2}}\\
&>(\lambda+\frac{q^{2}}{2s^{4}}f)\mathbf{x^{T}x}.
\end{split}
\]

By assumption, $h$ has positive curvature operator, hence $\lambda>0$. Therefore, the matrix $e^{-2f}\mathbf{A}$ is positive definite,
i.e. $\hat{h}$ has positive curvature operator. The proof is completed.
\end{proof}

On the other hand, Hamilton showed  that if $c$ is small enough and $q$ large enough (independent of $\epsilon$), the metric $\hat{h}=e^{-2f}h$ satisfies Hamilton's improved pinching estimates \cite{Ham2}  on $s\in[0,4]$, and has positive curvature operator for $s\in(1,4]$ if $h$ is $\epsilon$-close to $h_{std}$ with $\epsilon$ sufficiently small.
From now on we will fix a small $c$ and a large $q$ satisfying Proposition \ref{p5.1} and Hamilton's result.
 It is easy to extend the metric $(S^{3}\times(-\infty,4],e^{-2f}h_{std})$ to a rotationally symmetric metric on $\mathbb{R}^{4}$ by gluing a suitable chosen nice cap with positive curvature operator and satisfying Hamilton's improved pinching estimates. We will fix such a metric, denote it by $g_{std}$. The fixed point $p$ of the $SO(4)$-action on $(\mathbb{R}^{4},g_{std})$ is called the tip.
From the construction, there is a positive constant $A_{0}$ such that outside the geodesic ball $B(p,A_{0})$ in $(\mathbb{R}^{4},g_{std})$, $(\mathbb{R}^{4}\setminus{B}(p,A_{0}),g_{std})$ is isometric to a half cylinder. We denote the isometry by $\psi_{std}:(\mathbb{R}^{4}\setminus{B}(p,A_{0}),g_{std})\rightarrow(S^{3}\times(-\infty,0],h_{std})$.
We also define a function $s_{std}:(\mathbb{R}^{4},g_{std})\rightarrow\mathbb{R}$ to be $s_{std}(x)=A_{0}-\mathrm{dist}_{g_{std}}(x,p)$.

Let $\alpha:\mathbb{R}\rightarrow[0,1]$ be a cutoff function such that $\alpha(s)\equiv1$ if $s\leq2$ and $\alpha(s)\equiv0$ if $s\geq3$.
There exists a universal constant $\epsilon_{3}$, if $h$ is $\epsilon$-close to $h_{std}$ with $\epsilon<\epsilon_{3}$, then the metric $\check{h}=e^{-2f}[\alpha(s)h+(1-\alpha(s))h_{std}]$ has positive curvature operator on $s\in(1,4]$, and $\check{h}$ has positive curvature operator on $s\in(0,1]$ provided ${h}$ has positive curvature operator.
Since $\check{h}$ equals to $e^{-2f}h_{std}$ on $s\in[3,4]$, we can glue the corresponding portion of $(\mathbb{R}^{4},g_{std})$ to it and obtain a new manifold $\mathcal{S}$, diffeomorphic to $\mathbb{R}^{4}$, with a new metric $h_{surg}$. This is the so-called Hamilton's surgery process.

From the construction above, we have:

\begin{prop} \label{p5.2}

Suppose $h$ is a metric on $S^{3}\times(-4,4)$ such that $h$ is $\epsilon$-close to $h_{std}$ in the $C^{[\frac{1}{\epsilon}]}$-topology with $\epsilon<\epsilon_{3}$. If $h$ has positive curvature operator, so does $h_{surg}$.

\end{prop}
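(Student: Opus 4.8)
\textbf{Proof proposal for Proposition \ref{p5.2}.} The plan is to assemble the pieces already developed in this section rather than redo any curvature computation. The metric $h_{surg}$ is built on the manifold $\mathcal{S}\cong\mathbb{R}^4$ by gluing two regions: the portion of $(\mathbb{R}^4,g_{std})$ sitting over $s\in[3,4]$ (together with the cap it carries), and the region $S^3\times(-\infty,4)$ carrying the metric $\check{h}=e^{-2f}[\alpha(s)h+(1-\alpha(s))h_{std}]$; these agree on the overlap $s\in[3,4]$, where both equal $e^{-2f}h_{std}$. So it suffices to check that $h_{surg}$ has positive curvature operator region by region.

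First I would record that $(\mathbb{R}^4,g_{std})$ was chosen, once and for all, to have positive curvature operator everywhere (this is exactly the property built into $g_{std}$ in the paragraph preceding Proposition \ref{p5.2}); hence positivity holds on the glued-in cap and on the collar $s\in[3,4]$. Second, on the tube $s\in(1,4]$ the text already states that $\check{h}$ has positive curvature operator whenever $\epsilon<\epsilon_3$, with no hypothesis on $h$ beyond $\epsilon$-closeness to $h_{std}$; so this range is automatically fine. Third, on $s\in(0,1]$ one has $\alpha\equiv 1$, so $\check{h}=e^{-2f}h$, and the text states $\check{h}$ has positive curvature operator there \emph{provided} $h$ does — this is where the hypothesis of the proposition is used, via Proposition \ref{p5.1} (after possibly shrinking $c$, enlarging $q$, and taking $\epsilon<\min\{\epsilon_2,\epsilon_3\}$, which we have already agreed to do in fixing $c,q$). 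Fourth, on $s\le 0$ we have $f\equiv 0$, so $\check{h}=h$ there, and positivity is just the hypothesis.

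Combining the four ranges, $h_{surg}$ has positive curvature operator at every point of $\mathcal{S}$, which is the assertion. I do not anticipate a genuine obstacle here: the substantive analytic work is Proposition \ref{p5.1} and Hamilton's pinching estimates, both already in hand, and the only thing to be careful about is bookkeeping — making sure the constants $c,q,\epsilon$ are chosen below all the thresholds $c_0,\epsilon_2,\epsilon_3$ simultaneously (which the paragraph fixing $c$ and $q$ has already done) and that the gluing is along a region where the two metrics literally coincide, so that no new curvature is created at the seam.
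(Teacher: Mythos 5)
Your proposal is correct and matches the paper's reasoning; in fact the paper gives no separate proof of Proposition~\ref{p5.2}, introducing it only with the phrase ``From the construction above, we have,'' so the content is exactly the region-by-region bookkeeping you spell out (positivity on $s\le 0$ from the hypothesis, on $s\in(0,1]$ from Proposition~\ref{p5.1}, on $s\in(1,4]$ from Hamilton's estimates independently of $h$, and on the glued-in cap from the choice of $g_{std}$, with agreement on the overlap $s\in[3,4]$). One trivial slip: the original neck is $S^3\times(-4,4)$, not $S^3\times(-\infty,4)$, but this has no bearing on the argument.
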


\begin{rem} \label{r5.3}

From the construction above, it's clear that if $0<\epsilon<\epsilon_{3}$, and $h_{\mu},\mu\in[0,1]$, is a continuous path of metrics $\epsilon$-close to $h_{std}$ on $S^{3}\times(-4,4)$, then $(h_{\mu})_{surg}$ is a continuous path of metrics with positive isotropic curvature on $\mathcal{S}$.

\end{rem}

The following lemma concerns the deformation of metrics on surgery caps.

\begin{lem} \label{l5.4}

There exits a small constant $\epsilon_{4}$ with the following property. If $0<\epsilon<\epsilon_{4}$ and $h$ is a metric on $S^{3}\times(-4,4)$ such that $h$ is $\epsilon$-close to $h_{std}$,
then there exists a continuous path of metrics $h_{\mu}$, $\mu\in[0,1]$, with positive isotropic curvature on $\mathcal{S}$ such that $h_{0}=h_{surg}$, $h_{1}$ is rotationally symmetric, and $h_{\mu}$ restricts to the linear homotopy $(1-\mu)h+\mu{h}_{std}$ on $s^{-1}((-4,0))$ for all $\mu\in[0,1]$.

\end{lem}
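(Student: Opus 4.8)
The plan is to interpolate the surgery metric $h_{surg}$ on $\mathcal{S}$ to a rotationally symmetric model in two stages: first deform the ``neck part'' of $h_{surg}$ (the region $s \in (1,4]$ and the glued cap region coming from $g_{std}$) to the fixed rotationally symmetric model $g_{std}$ while keeping the curvature operator positive, and then separately deform the portion $s^{-1}((-4,0))$ where we are only allowed the prescribed linear homotopy $(1-\mu)h + \mu h_{std}$. I would organize $\mathcal{S}$ according to the three regions appearing in the surgery construction: region $\mathrm{I} = s^{-1}((-4,0))$ (genuinely arbitrary metric $h$), region $\mathrm{II} = s^{-1}([0,3])$ where $\check h = e^{-2f}[\alpha(s)h + (1-\alpha(s))h_{std}]$ already transitions from $h$ to $h_{std}$, and region $\mathrm{III}$ (the glued piece of $(\mathbb{R}^4, g_{std})$), which is already rotationally symmetric and fixed.

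The key steps I would carry out, in order, are as follows. (1) On region $\mathrm{I}$, define $h_\mu = (1-\mu)h + \mu h_{std}$ as required; since both $h$ and $h_{std}$ are $\epsilon$-close to $h_{std}$, the whole segment is $C\epsilon$-close to $h_{std}$, so by (a rescaled version of) Proposition~\ref{p5.1} / Proposition~\ref{p5.2} and Remark~\ref{r5.3} the corresponding surgered metrics $(h_\mu)_{surg}$ on $\mathcal{S}$ form a continuous path with positive isotropic curvature, starting at $h_{surg}$ and ending at a metric $h_1^{(0)}$ that on region $\mathrm{I}$ equals $h_{std}$ and elsewhere is $e^{-2f}h_{std}$ glued to $g_{std}$ --- i.e., $h_1^{(0)}$ is already rotationally symmetric away from region $\mathrm{I}$. (2) Crucially, after step (1) the metric on \emph{all} of $\mathcal{S}$ is now of the form: the round cylinder $h_{std}$ on $s \in (-4,0]$, $e^{-2f}h_{std}$ on $s \in [0,3]$, and $g_{std}$'s cap beyond; the only failure of rotational symmetry is that we want a \emph{single} warped-product metric rather than this cylinder-times-conformal-factor presentation. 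But in fact $h_{std}$ on $S^3 \times (-4,0]$, $e^{-2f}h_{std}$, and the chosen $g_{std}$ are all already rotationally symmetric warped products (by construction $g_{std}$ is the rotationally symmetric metric obtained by capping $(S^3\times(-\infty,4], e^{-2f}h_{std})$). So $h_1^{(0)}$ is \emph{already} rotationally symmetric, and we may take $h_1 = h_1^{(0)}$, $h_\mu = (h_\mu)_{surg}$. The only subtlety is checking that the specific gluing in the definition of $h_{surg}$ does not destroy exact rotational symmetry on the overlap $s \in [3,4]$, where $\check h = e^{-2f}h_{std}$ matches $g_{std}$'s cylindrical portion isometrically via $\psi_{std}$; this is immediate since $\alpha \equiv 0$ there.

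The main obstacle I anticipate is not the interpolation itself but verifying the PIC (positive curvature operator) preservation along the two-parameter family, i.e. that applying the conformal surgery factor $e^{-2f}$ and the cutoff-mixing $\alpha(s)h + (1-\alpha(s))h_{std}$ to the \emph{segment} of metrics $(1-\mu)h + \mu h_{std}$ (rather than to a single metric $h$) still lands inside the region where Propositions~\ref{p5.1}, \ref{p5.2} apply. This reduces to noting that the linear segment stays $C'\epsilon$-close to $h_{std}$ in $C^{[1/\epsilon]}$ whenever $h$ is $\epsilon$-close, so shrinking the threshold to some $\epsilon_4 \le \min\{\epsilon_2,\epsilon_3\}/C'$ makes every $h_\mu$ satisfy the hypotheses of those propositions uniformly in $\mu$; then positivity of the curvature operator of $(h_\mu)_{surg}$ on the cap region $s \in (0,1]$ follows since $h_\mu$ itself has positive curvature operator there (being a convex combination, in the cone of $C\epsilon$-pinched-to-$h_{std}$ curvature operators, of two metrics with positive curvature operator), and on $s \in (1,4]$ it follows from Hamilton's estimate as in Proposition~\ref{p5.2} independently of the curvature sign of $h_\mu$. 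Continuity in $(\mu, \text{point})$ is clear from the explicit formulas. This completes the construction, with $h_4 := \epsilon_4$ the stated constant.
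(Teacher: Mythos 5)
Your core construction is exactly the paper's proof: set $\bar h_\mu=(1-\mu)h+\mu h_{std}$ and $h_\mu=(\bar h_\mu)_{surg}$, invoke Remark~\ref{r5.3} for the PIC property of the path, and observe that $(\bar h_1)_{surg}=(h_{std})_{surg}$ is rotationally symmetric while the surgery construction is the identity on $s^{-1}((-4,0))$. One small inaccuracy in your justification that you should fix: on the cap region $s\in(0,1]$ you claim positive curvature operator for $h_\mu$ ``being a convex combination of two metrics with positive curvature operator,'' but the lemma does not assume $h$ has positive curvature operator, $h_{std}$ has a null curvature eigenvalue (the cylinder direction), and curvature is not a linear function of the metric anyway; the correct and simpler point — which is what Remark~\ref{r5.3} encodes — is that $h_{std}$ has strictly positive \emph{isotropic} curvature and PIC is an open condition, so every $\bar h_\mu$ (still $\epsilon$-close to $h_{std}$, being a convex combination) and hence its conformal modification $e^{-2f}[\alpha\bar h_\mu+(1-\alpha)h_{std}]$ on $s\in(0,1]$ has PIC, with the region $s\in(1,4]$ and the glued cap handled by Hamilton's estimate as you say.
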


\begin{proof}

Define $\bar{h}_{\mu}=(1-\mu)h+\mu{h}_{std}$, and $h_{\mu}=(\bar{h}_{\mu})_{surg}$ for $\mu\in[0,1]$. It's clear that $(\bar{h}_{1})_{surg}$ is rotationally symmetric.
\end{proof}

We can also perform the same surgery process as above on a neck $((S^{3}/\Gamma)\times(-4,4),h)$ to obtain a  Riemannian orbifold $(\mathcal{S},h_{surg})$, where $\mathcal{S}$ is diffeomorphic to $C_{\Gamma}$. Proposition \ref{p5.2} and Lemma \ref{l5.4} are also valid in this case.

Let $h$ be a Riemannian metric on $S^{3}/\Gamma\times(-4,4)$ which is $\epsilon$-close to the standard metric $h_{std}=ds^{2}+d\theta^{2}$, where $d\theta^{2}$ is a constant curvature metric of scalar curvature $1$ on $S^{3}/\Gamma$.
Let $(\mathcal{S}^{\pm},h_{surg}^{\pm})$ be the orbifolds obtained by cutting along the central slice $S^{3}/\Gamma\times\{0\}$ and gluing surgery caps to both sides of the central slice, respectively.
We can perform the \textmd{M-W} connected sum at the tips $p^{\pm}$ of the surgery caps and obtain $(\mathcal{S}^{+}\#\mathcal{S}^{-},h_{surg}^{+}\#{h}_{surg}^{-})$ with PIC.

\begin{lem} \label{l5.5}

There is a universal constant $\epsilon_{5}$ with the following property. For any $0<\epsilon<\epsilon_{5}$, if $(S^{3}/\Gamma\times(-4,4),h)$ is $\epsilon$-close to $h_{std}$, there is a diffeomorphism from $(S^{3}/\Gamma\times(-4,4),h)$ to $\mathcal{S}^{+}\#\mathcal{S}^{-}$ such that the pulled back of the metric $h_{surg}^{+}\#{h}_{surg}^{-}$ can be deformed to $(S^{3}/\Gamma\times(-4,4),h)$ through PIC metrics  which all coincide with $h$ on the regions $s^{-1}((-4,-3))$ and $s^{-1}((3,4))$.

\end{lem}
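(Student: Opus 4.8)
The plan is to show that the neck metric $(S^3/\Gamma\times(-4,4),h)$, the original object, is isotopic through PIC metrics to the glued-up surgery object $\mathcal{S}^+\#\mathcal{S}^-$. The first step is to set up the diffeomorphism. By construction, $\mathcal{S}^{\pm}$ are each diffeomorphic to $C_\Gamma$ (the orbifold cap $\mathbb{R}^4/\Gamma$), obtained by cutting the neck along $S^3/\Gamma\times\{0\}$ and attaching Hamilton surgery caps on the portions $s\ge 0$ and $s\le 0$ respectively. After performing the M-W connected sum at the two tips $p^\pm$, the resulting orbifold $\mathcal{S}^+\#\mathcal{S}^-$ is diffeomorphic to the mapping-torus-free object $C_\Gamma\#C_\Gamma$ with the two orbifold singularities removed; topologically this is again $S^3/\Gamma\times(-4,4)$ (each $C_\Gamma$ contributes a collar $S^3/\Gamma\times(\text{half-line})$ and removing the singular point from the connected sum of two caps along a sphere gives back a cylinder). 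I would fix such a diffeomorphism carefully so that on the outer regions $s^{-1}((-4,-3))$ and $s^{-1}((3,4))$ it is the identity in the $(\theta,s)$-coordinates, which is possible because both Hamilton surgery caps agree with $e^{-2f}h_{std}=h_{std}$ there (recall $f\equiv 0$ for $s\le 0$) and the M-W connected sum is performed deep inside, near the tips.

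The second step is the deformation. I would combine Lemma \ref{l5.4} with the M-W family construction of Proposition \ref{orbconnsum}. Apply Lemma \ref{l5.4} separately on each side: on the $s\ge 0$ side we get a PIC path $h^+_\mu$ on $\mathcal{S}^+$ from $h^+_{surg}$ to a rotationally symmetric metric, restricting to the linear homotopy $(1-\mu)h+\mu h_{std}$ on the relevant neck portion; symmetrically on the $s\le 0$ side. Near the tips $p^\pm$ these endpoint metrics are rotationally symmetric, hence after a small further M-W modification (Remark \ref{remark2.2}) they become exact product metrics $ds^2+\rho^2 d\theta^2$ near the tips, so the M-W connected sum $\mathcal{S}^+\#\mathcal{S}^-$ of the whole family is defined and stays PIC by Proposition \ref{orbconnsum}. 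This produces a PIC path starting at $h^+_{surg}\#h^-_{surg}$ and ending at an M-W connected sum of two rotationally symmetric caps, which — because the gluing is equivariant under $SO(4)/\Gamma$ — is itself a warped product metric $dr^2+\omega^2(r)d\theta^2$ on $S^3/\Gamma\times(\text{interval})$, equal to the cylinder on the ends.

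The third step is to collapse the warped product to the cylinder. At this stage the metric is $dr^2+\omega^2(r)d\theta^2$ with $\omega\equiv 1$ near the two ends, so Proposition \ref{p4.7} applies directly: there is a PIC path of warped-product metrics deforming it to $dr^2+d\theta^2$, fixing the metric on the end regions. Concatenating: $h_{surg}^+\#h_{surg}^- \rightsquigarrow (\text{warped product})\rightsquigarrow dr^2+d\theta^2 = h_{std}$. For the final leg, since we also want to land back at $h$ (not merely $h_{std}$), I would insert the linear homotopy $(1-\mu)h+\mu h_{std}$, which is PIC when $\epsilon$ is small because PIC is an open condition and the straight segment stays $O(\epsilon)$-close to $h_{std}$; crucially, throughout the Lemma \ref{l5.4} portion of the deformation the metric already restricts to this same linear homotopy on the ends, so the two pieces glue continuously and the combined path coincides with the linear homotopy on $s^{-1}((-4,-3))\cup s^{-1}((3,4))$, as required (on the very ends, where $\alpha\equiv$ the appropriate constant and $f\equiv 0$, the linear homotopy is literally $(1-\mu)h+\mu h_{std}$, and at $\mu=0$ it is $h$). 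Choosing $\epsilon_5$ smaller than $\epsilon_3,\epsilon_4$ and than the openness threshold for PIC of the linear homotopy finishes the argument.

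The main obstacle I anticipate is bookkeeping the identification and the boundary behavior: one must pin down the diffeomorphism $S^3/\Gamma\times(-4,4)\cong \mathcal{S}^+\#\mathcal{S}^-$ so that (i) it is the standard product identification on the two collar ends, and (ii) under it the whole concatenated family agrees with the prescribed linear homotopy on those ends — this forces the M-W connected sum and all the warped-product deformations to be supported strictly away from $s^{-1}((-4,-3))\cup s^{-1}((3,4))$, which in turn requires checking that the Hamilton cap, the cutoff $\alpha$, and the M-W gluing parameters can all be localized near $s=0$ and near the tips. The curvature positivity at each stage is comparatively routine, being handed to us by Propositions \ref{p5.2}, \ref{orbconnsum}, \ref{p4.7} and Lemma \ref{l5.4}; the delicate point is purely the compatibility of domains and the continuity of the path at the junctions between these three deformation phases.
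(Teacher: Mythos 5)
Your overall architecture (surger the family, get to a warped product, flatten with Proposition~\ref{p4.7}, then return to $h$) is the right shape, but there is a genuine gap at the very point you flag as delicate: the boundary behavior. The lemma demands that \emph{every} metric in the path coincide with $h$ on $s^{-1}((-4,-3))$ and $s^{-1}((3,4))$, and your construction violates this. You base phase~1 on Lemma~\ref{l5.4}, whose deformation is built from the \emph{uncut} linear homotopy $\bar h_\mu=(1-\mu)h+\mu h_{std}$ on the neck portion; on the outer regions this is not $h$ for $\mu\in(0,1]$. Likewise your final leg is a linear homotopy between $h_{std}$ and $h$ on the outer regions, so again the path is not frozen at $h$ there. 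You write that ``the combined path coincides with the linear homotopy on $s^{-1}((-4,-3))\cup s^{-1}((3,4))$, as required'' --- but that is a misreading of the requirement: coinciding with a linear homotopy is not coinciding with $h$, and only at the two endpoints $\mu\in\{0,1\}$ does your path actually equal $h$ there.

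The paper avoids this with a small but crucial change: instead of the uncut linear homotopy, it uses a cutoff deformation $h_\mu=(1-\mu\alpha(s))h+\mu\alpha(s)h_{std}$ with $\alpha\equiv1$ on $|s|\le2$ and $\alpha\equiv0$ on $|s|\ge3$. Then $h_\mu\equiv h$ on the outer regions for every $\mu$, while $h_1\equiv h_{std}$ on $|s|\le2$; surgering the whole family (Remark~\ref{r5.3}) and performing M--W connected sum of the family (Proposition~\ref{orbconnsum}) gives a PIC path from $h_{surg}^+\#h_{surg}^-$ to $(h_1)_{surg}^+\#(h_1)_{surg}^-$ that is honestly frozen at $h$ on the ends. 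The endpoint is a warped product only in the middle region --- not globally rotationally symmetric, which is fine because Proposition~\ref{p4.7} flattens a warped product while fixing the two end collars. Reversing $h_\mu$ then returns to $h$ with the ends still frozen. So the fix to your argument is to replace Lemma~\ref{l5.4}'s uncut homotopy with the cutoff family $h_\mu$ above and drop the final linear-homotopy patch; everything else you wrote (the M--W family construction staying PIC, the warped-product flattening) then goes through.
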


\begin{proof}

Let $\alpha:\mathbb{R}\rightarrow[0,1]$ be a cutoff function such that $\alpha(s)\equiv1$ if $|s|\leq2$ and $\alpha(s)\equiv0$ if $|s|\geq3$. Let $h_{\mu}=(1-\mu\alpha(s))h+\mu\alpha(s)h_{std}$, $\mu\in[0,1]$.
Note that $h_{\mu}=h$ for all $\mu\in[0,1]$ if $|s|\geq3$, and $h_{1}=h_{std}$ if $s\in[-2,2]$.
If $h$ is $\epsilon$-closed to $h_{std}$ with $\epsilon$ small enough, then
by Remark \ref{r5.3}, and Proposition \ref{orbconnsum}, $(\mathcal{S}^{+}\#\mathcal{S}^{-},h_{surg}^{+}\#{h}_{surg}^{-})$ can be deformed to $(\mathcal{S}^{+}\#\mathcal{S}^{-},(h_{1})_{surg}^{+}\#(h_{1})_{surg}^{-})$ through PIC metrics which coincide with $h$ near the two ends of $\mathcal{S}^{+}\#\mathcal{S}^{-}$.
On the other hand, we can reparameterize $(\mathcal{S}^{+}\#\mathcal{S}^{-},(h_{1})_{surg}^{+}\#(h_{1})_{surg}^{-})$ to get  $(S^{3}/\Gamma\times(-4-a,4+a),\tilde{h})$ for some suitable $a>0$ such that, when $r\in(-4-a,-a)\bigcup(a,4+a)$, $\tilde{h}$ coincide with $h_{1}$ on $s^{-1}((-4,0)\bigcup(0,4))$; when $r\in(-2-a,2+a)$, $\tilde{h}=dr^{2}+\omega^{2}(r)d\theta^{2}$; when $r\in(-2-a,-a)\bigcup(a,2+a)$, $\omega\equiv1$.
By Proposition \ref{p4.7},  we can deform the metric $\tilde{h}$ on the region $r\in(-2-a,2+a)$ to $h_{std}$ through PIC  metrics with fixed  two ends.
Thus we can deform $(\mathcal{S}^{+}\#\mathcal{S}^{-},(h_{1})_{surg}^{+}\#(h_{1})_{surg}^{-})$ to $(S^{3}/\Gamma\times(-4,4),h_{1})$ through PIC metrics with fixed  two ends.
Finally we can deform $h_{1}$ back to $h$ by the deformation $h_{\mu}$.
\end{proof}

\section{Canonical neighborhood assumption} \label{s6}

\subsection{Ricci flow with surgery}

The Ricci flow equation
\[\frac{\partial g}{\partial t}=-2Ric_{g}\]
was introduced by Hamilton \cite{Ham1}. Now let $g_0$ be a metric with PIC on a compact four-manifold. We evolve $g_0$ by the Ricci flow and get a solution $g_t$ for $t>0$. Since the scalar curvature is positive, the solution will blow up in finite time. After performing surgeries, the solution may be resumed; repeating these procedures many times till we get nothing. This will produce a solution $g_t$ to the Ricci flow with surgery (see \cite{CZh1} \cite{CTZh}). Gluing all pieces left by surgeries together, at length, the following theorem can be proven (see \cite{CTZh}):

\begin{thm}\label{t6.1} {\rm(Main Theorem in \cite{CTZh})}\indent
Let $(M^{4}, g)$ be a compact four-dimensional manifold. Then it admits a metric with positive isotropic curvature if and only if it is diffeomorphic to $S^{4},\mathbb{RP}^{4},(S^{3}\times\mathbb{R})/G$ or a connected sum of them. Here $G$ is a cocompact fixed point free discrete isometric subgroup  of the standard $S^{3}\times\mathbb{R}$.

\end{thm}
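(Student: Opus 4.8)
\textbf{Proof proposal for Theorem \ref{t6.1}.}
The plan is to run the Ricci flow with surgery starting from $(M^4,g)$ with $g\in\textmd{PIC}(M)$, following Hamilton's construction in \cite{Ham2} as refined in \cite{CZh1} and \cite{CTZh}, and to keep careful track of how the topology changes at each surgery time. The ``only if'' direction is the substantive one, and is proved by analyzing what the flow produces. First I would recall the a priori estimates: since $g$ has PIC, Hamilton's pinching estimates and the Hamilton--Ivey type curvature restrictions hold along the flow, and one obtains a canonical neighborhood theorem (to be discussed in Section \ref{s6}) saying that at any point of sufficiently large scalar curvature, the rescaled metric is close either to a shrinking spherical space form quotient, to a piece of a round cylinder $S^3\times\R$ or its $\mathbb{Z}_2$-quotient, or to a cap region. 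This forces every region where the curvature blows up at a finite singular time to be covered by necks $S^3/\Gamma\times(-4,4)$ and by caps of the types enumerated in Section \ref{s2.2}.

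The second step is the surgery and topological bookkeeping. At the first singular time $t_1$, the high-curvature part of $M_{t_1^-}$ is a disjoint union of connected components each of which is either entirely of high curvature, in which case it is diffeomorphic to $S^4$, $\R P^4$, $S^3/\Gamma\times_f S^1$, $S^4/\langle\Gamma,\hat\tau_1\rangle\#_f S^4/\langle\Gamma,\hat\tau_2\rangle$, a spherical orbifold, or an $S^3/\Gamma\times\R$ quotient; or else contains $\epsilon$-necks along which one performs Hamilton's surgery, cutting and gluing in standard caps. Using van Kampen and the classification of fixed point free finite subgroups of $SO(4)$ and of the relevant isometry groups of $S^3\times\R$ (Lemmas 5.1--5.2 of \cite{CTZh}), one checks that removing a neck and gluing two caps replaces a manifold by a connected sum decomposition into pieces of the allowed types: reversing the surgery exhibits $M_{t_1^-}$ as a connected sum of the post-surgery components with copies of $S^4$, $\R P^4$, or $(S^3\times\R)/G$. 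Iterating over the finitely many surgery times and using that the flow becomes extinct in finite time, every piece is eventually seen to be diffeomorphic to one of $S^4$, $\R P^4$, $(S^3\times\R)/G$, and $M$ is recovered as a connected sum of these, which gives the ``only if'' direction. The ``if'' direction is comparatively easy: $S^4$ and $\R P^4$ carry round metrics with PIC, $(S^3\times\R)/G$ carries the (PIC) product metric descended from $h_{std}$, and Micallef--Wang's connected sum construction (Section \ref{SECconnsum}) produces a PIC metric on any connected sum of them.

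The main obstacle I expect is the combination of two issues intrinsic to dimension four and not present in the three-dimensional picture: first, the appearance of genuine orbifold singularities in the limits, so that the canonical neighborhood theorem and the surgery process must be carried out on orbifolds rather than manifolds, and one must control the local uniformization groups $\Gamma_x$ (this is where the restriction $\Gamma\subset SO(4)$ from the Lefschetz fixed point argument, and the enumeration of the two types of orbifold caps, is essential); and second, the fact that the neck quotient $(S^3\times\R)/G$ can be non-orientable or can have the form of a twisted mapping torus or an orbifold connected sum of two $S^4/\langle\Gamma,\hat\tau\rangle$'s, so that the list of ``building blocks'' is longer and one must verify that the surgery never produces anything outside it. Handling these points rigorously is precisely the content of \cite{CTZh}; here the statement is quoted as input, so in this paper we only need to know its conclusion.
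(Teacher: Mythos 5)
Your sketch correctly outlines the Ricci-flow-with-surgery argument of Chen--Tang--Zhu, and you rightly observe at the end that the present paper does not prove this theorem but simply cites it from \cite{CTZh}; the paper's ``proof'' is exactly that citation, preceded by the one-sentence remark that gluing the surgery pieces back together yields the classification. Your identification of the genuinely four-dimensional complications (orbifold singularities in the limits, the enlarged list of caps and neck quotients, and the group-theoretic lemmas controlling $\Gamma_x$) matches the structure of the argument in \cite{CTZh}, so there is nothing to correct here.
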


In this paper, we are concerned not only the topology  but also the geometry of the manifold $M$.  So we have to investigate closely the geometric structure of the pieces left by surgeries.
In order to understand the singularities of the Ricci flow solution,
we need to investigate the ancient $\kappa$-solution $(X, g_{t})$, $t\in(-\infty,0]$, on an four-orbifold $X$ with at most isolated singularities, such that $g_{t}$ is PIC and satisfies the so called the restricted isotropic curvature pinching condition
\[a_{3} \leq\Lambda a_{1}, c_{3} \leq\Lambda c_{1}, b_{3}^{2}\leq a_{1}c_{1},\]
where $\Lambda$ is a positive constant.
As in \cite{CTZh}, we call such a solution an ancient $\kappa$-orbifold solution for short.

\subsection{Ancient solution }

The classification of ancient $\kappa$-orbifold solutions was done in Chapter 3 of \cite{CTZh}, we state the results in the following theorem:

\begin{thm}\label{t6.2} {\rm(Theorem 3.2-3.8 in \cite{CTZh})}\indent
For a four-dimensional ancient $\kappa$-orbifold solution $(X, g_{t})$, $t\in(-\infty,0]$,

\begin{enumerate}
  \item if the curvature operator has nontrivial null eigenvector somewhere, then $X$ is isometric to $S^{3}/\Gamma\times\mathbb{R}$, $C_{\Gamma}^{\tau}$ or $C_{II}$, with the induced metric from the product metric on $S^{3}\times\mathbb{R}$,
  \item if the curvature operator is strictly positive everywhere, then either $X$ is compact and diffeomorphic to a spherical  space form $S^{4}/\Gamma$ with at most isolated singularities, or $X$ is noncompact and diffeomorphic to $\mathbb{R}^{4}$ or $C_{\Gamma}$.
\end{enumerate}
Furthermore, for $\epsilon>0$ small enough one can find positive constants $C_{1}=C_{1}(\epsilon)$, $C_{2}=C_{2}(\epsilon)$, such that for every $(x, t)$, there is a radius $r$, $\frac{1}{C_{1}}(R(x,t))^{-\frac{1}{2}}<r<C_{1}(R(x,t))^{-\frac{1}{2}}$, so that some open neighborhood $B_{t}(x,r)\subset B\subset B_{t}(x, 2r)$ falls into one of the following categories:

\begin{description}
  \item[(i)] $B$ is an evolving $\epsilon$-neck around $(x, t)$  (in the sense that it is the time slice at time $t$ of the parabolic region $\{(x',t')|x'\in B, t'\in[t-R(x, t)^{-1}, t]\}$, where the solution is well defined on the whole parabolic neighborhood and is, after scaling with factor $R(x, t)$ and shifting the time $t$ to zero, $\epsilon$-close to the corresponding subset of the evolving round cylinder $S^{3}/\Gamma\times\mathbb{R}$ with scalar curvature $1$ at the time zero),
  \item[(ii)] $B$ is an evolving $\epsilon$-cap (in the sense that it is the time slice at the time $t$ of an evolving metric on open caps $\mathbb{R}^{4}$, $C^{\tau}_{\Gamma}$, $C_{\Gamma}$ or $C_{II}$ such that the region outside some suitable compact subset is an evolving $\epsilon$-neck),
  \item[(iii)] at time $t$, $x$ is contained in a connected compact component with positive curvature operator.
\end{description}
Moreover, the scalar curvature of $B$ in cases $(i)$ and $(ii)$ at time $t$ is between $C_{2}^{-1}R(x,t)$ and $C_{2}R(x,t)$.
\end{thm}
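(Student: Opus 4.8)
The plan is to adapt Perelman's analysis of three-dimensional ancient $\kappa$-solutions to the four-dimensional PIC category with restricted isotropic curvature pinching, while keeping track of the orbifold singularities (which are isolated, with local groups in $SO(4)$ by the discussion of Section \ref{s2.2}). The first step is the observation that on an ancient $\kappa$-orbifold solution the restricted pinching inequalities $a_3\le\Lambda a_1$, $c_3\le\Lambda c_1$, $b_3^2\le a_1c_1$ together with PIC force the curvature operator to be \emph{nonnegative} everywhere; this comes from propagating Hamilton's pinching estimates backwards in time. Granting this, I would first treat case (1): if the curvature operator of $g_t$ has a nontrivial kernel at some point, Hamilton's strong maximum principle shows the kernel is invariant under parallel transport and holonomy, so the universal cover splits isometrically as a Riemannian product. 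Among four-dimensional products admitting a PIC metric of positive scalar curvature only $S^3\times\mathbb{R}$ occurs — flat factors and the $S^2\times\mathbb{R}^2$, $S^2\times S^2$ cases are excluded by PIC, and the three-dimensional cross-section must itself be round by Hamilton's theorem. One then enumerates the fixed-point-free discrete isometric quotients of $S^3\times\mathbb{R}$ which are orbifolds with at most isolated singularities, obtaining exactly $S^3/\Gamma\times\mathbb{R}$, $C^\tau_\Gamma$ and $C_{II}$.

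For case (2), curvature operator strictly positive everywhere, I would run the asymptotic gradient shrinking soliton machinery. Using $\kappa$-noncollapsing and Hamilton's trace Harnack inequality (available because the curvature operator is nonnegative), Perelman's reduced-length and reduced-volume argument produces an asymptotic soliton as $t\to-\infty$; the argument is insensitive to the isolated singularities since they form a set of codimension four and are avoided by generic $\mathcal{L}$-geodesics, or are handled on local uniformizing covers as in \cite{CTZh}. The asymptotic soliton is a nonflat gradient shrinking soliton, still PIC and restrictedly pinched, hence with nonnegative curvature operator; classifying such solitons in dimension four (the crux, discussed below) yields that it is the round $S^4/\Gamma$ or the round cylinder $S^3/\Gamma\times\mathbb{R}$. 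From the compact-soliton case one deduces a compact $X$ is a spherical orbifold $S^4/\tilde\Gamma$; from the cylinder case one deduces that a noncompact $X$ has one or two cylindrical ends and is diffeomorphic to $\mathbb{R}^4$ or $C_\Gamma$.

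The canonical-neighborhood trichotomy (i)--(iii) and the uniform constants $C_1(\epsilon),C_2(\epsilon)$ follow from a contradiction-and-compactness argument: if for some $\epsilon$ no admissible radius existed at points $(x_k,t_k)$ of a sequence of ancient $\kappa$-orbifold solutions, then rescaling by $R(x_k,t_k)$ and passing to a Cheeger--Gromov limit (equivariant, on local uniformizing covers) via Hamilton's compactness theorem — which applies because of the bounded curvature and $\kappa$-noncollapsing — produces an ancient $\kappa$-orbifold solution in which $x_\infty$ admits no neck, no cap, and no compact positively-curved-component neighborhood, contradicting cases (1)--(2). The three alternatives correspond to $x$ far from the soliton center on a noncompact $X$ (an evolving $\epsilon$-neck, with curvature ratio controlled by the cylinder geometry), $x$ in the compact core where the neck structure breaks ($\epsilon$-cap modeled on $\mathbb{R}^4$, $C^\tau_\Gamma$, $C_\Gamma$ or $C_{II}$), and $X$ compact with positive curvature operator.

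The main obstacle is the classification of four-dimensional gradient shrinking solitons inside the PIC plus restricted-pinching class, i.e.\ ruling out non-round, non-cylindrical examples; Perelman's three-dimensional argument does not apply directly. One must exploit PIC together with the pinching $b_3^2\le a_1c_1$ to control the traceless Ricci part $B$ and the Weyl parts $W_\pm$ by the scalar curvature, and then run a B\"ohm--Wilking / Brendle-type tensor-maximum-principle argument — equivalently, the integral estimates in Chapter~3 of \cite{CTZh} — to force either constant curvature or a local product splitting. A secondary technical point is the orbifold bookkeeping: checking that the only isolated-singularity quotients of $S^3\times\mathbb{R}$, $\mathbb{R}^4$ and $S^4$ compatible with everything are those listed, which uses $\Gamma_x\subset SO(4)$ and the structure of cocompact free isometric actions on $S^3\times\mathbb{R}$, and verifying that neck and cap scales remain comparable to $R^{-1/2}$ uniformly in the local groups, so that $C_1,C_2$ are genuinely universal.
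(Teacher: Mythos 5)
This theorem is cited verbatim as Theorems 3.2--3.8 of \cite{CTZh} and is not proved in the paper you are reading, so there is no internal proof to compare against; the question is only whether your reconstruction would work. Your Perelman-style outline does track the methodology of \cite{CZh1} and \cite{CTZh}, but there are two problems. First, nonnegativity of the curvature operator is not obtained by ``propagating Hamilton's pinching estimates backwards in time'' --- it is part of the \emph{definition} of an ancient $\kappa$-(orbifold) solution in those references, or alternatively an elementary algebraic consequence of the restricted pinching condition itself: $a_3\le\Lambda a_1$ together with $a_1\le a_3$ and $\Lambda>1$ forces $a_1\ge 0$ (similarly $c_1\ge 0$), and then for $(x,y)\in\Lambda^2_+\oplus\Lambda^2_-$ one has $\langle\mathcal{R}(x,y),(x,y)\rangle\ge a_1|x|^2-2b_3|x||y|+c_1|y|^2\ge(\sqrt{a_1}|x|-\sqrt{c_1}|y|)^2\ge 0$ by $b_3^2\le a_1c_1$. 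Backward propagation of pinching is what lets blow-up limits of a surgical flow inherit these inequalities; it is not how one establishes nonnegativity on a given ancient solution.

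Second, and more importantly, the classification of four-dimensional nonflat gradient shrinking solitons subject to PIC and restricted pinching --- which you yourself flag as ``the crux'' --- is not carried out. Gesturing at B\"ohm--Wilking or Brendle-type tensor-maximum-principle arguments, or at unspecified ``integral estimates in Chapter~3 of \cite{CTZh},'' is not a proof, and this is precisely where the four-dimensional situation genuinely departs from Perelman's three-dimensional one; the actual arguments in \cite{CZh1} and \cite{CTZh} rest on Hamilton's strong maximum principle applied to the curvature operator, the soliton equations, and a dimension-reduction analysis at spatial infinity rather than on the algebraic cone machinery you invoke. Without establishing that the asymptotic soliton is a shrinking round $S^4/\Gamma$ or a shrinking cylinder $S^3/\Gamma\times\mathbb{R}$ (and that a noncompact solution with strictly positive curvature operator is exhausted by a cap together with $\epsilon$-necks modeled on that cylinder), you cannot conclude the topological alternatives in case (2), nor derive the canonical-neighborhood trichotomy (i)--(iii) with uniform constants $C_1(\epsilon),C_2(\epsilon)$. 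The orbifold bookkeeping (equivariant compactness on local uniformizing covers, as is used explicitly in the proof of Proposition~\ref{p6.3}) also needs to be done rather than waved at. As written, the proposal is a plausible blueprint, not a proof.
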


Usually, an $\epsilon$-cap in a four-orbifold $(X^{4},g)$ is defined to be an open submanifold $\mathcal{C}\subset X$ such that $\mathcal{C}$ is topologically a cap diffeomorphic to $\mathbb{R}^{4}$, $C_{\Gamma}^{\tau}$, $C_{\Gamma}$ or $C_{II}$, and there are open sets $N,Y\subset\mathcal{C}$ satisfying the following:
$N$ is an $\epsilon$-neck, and $\overline{Y}=\mathcal{C}\setminus N$ is a compact submanifold with boundary, and $\partial\overline{Y}$ is a central slice of some $\epsilon$-neck.
The set $Y$ in the above definition is called the core of $\mathcal{C}$.
The above information is usually sufficient for topological applications, but in this paper we are dealing with the properties of metrics, so we require more information about the geometry of an $\epsilon$-cap.

In order to do this, we discuss some properties of a noncompact ancient $\kappa$-orbifold solution with positive curvature operator.

\begin{prop}\label{p6.3}

There exists a universal function $\lambda:(0,\infty)\rightarrow(0,\infty)$ with the following property.
Let $(X,g_{t})$ $(t\in(-\infty,0])$ be a noncompact ancient $\kappa$-orbifold solution with positive curvature operator, then at every time $t_{0}$, there exits a point $x_0$ such that for any $y\in X$, the smallest eigenvalue of the curvature operator ${\mathcal{R}}$ at $(y,t_0)$ is greater than $\lambda(d_{t_0}(x_{0},y)R(x_{0},t_0)^{\frac{1}{2}})R(x_{0},t_0)$.

\end{prop}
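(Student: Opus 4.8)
The plan is to prove the uniform eigenvalue pinching statement by a compactness-and-contradiction argument, exploiting the classification of noncompact ancient $\kappa$-orbifold solutions with positive curvature operator from Theorem \ref{t6.2}, namely that such an $X$ is diffeomorphic to $\mathbb{R}^4$ or $C_\Gamma$ and is asymptotic to a cylindrical end. First I would fix a time, say $t_0=0$ after translating, and normalize by parabolic rescaling so that $R(x_0,0)=1$ at a suitably chosen basepoint $x_0$; the natural choice of $x_0$ is the tip, i.e. a point where the scalar curvature is (nearly) maximal, or for $C_\Gamma$ the orbifold singularity. The claim then reduces to: there is a universal positive function $\lambda(\cdot)$ so that the smallest eigenvalue $\nu(y)$ of $\mathcal{R}$ at $y$ satisfies $\nu(y)\ge \lambda(d_0(x_0,y))$ for every $y\in X$. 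Since $\mathcal{R}>0$ on the compact ball $\bar B_0(x_0,\rho)$ and $\nu$ is continuous, for each fixed $\rho$ this infimum is a positive number; the content is that it can be bounded below \emph{uniformly} over all such ancient solutions.

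The main step is the compactness argument. Suppose the statement fails for some radius $\rho$: then there is a sequence of normalized noncompact ancient $\kappa$-orbifold solutions $(X^{(k)},g_t^{(k)})$ with basepoints $x_0^{(k)}$, $R(x_0^{(k)},0)=1$, and points $y_k$ with $d_0(x_0^{(k)},y_k)\le\rho$ but $\nu^{(k)}(y_k)\to 0$. By Hamilton--Perelman compactness for ancient $\kappa$-solutions (which applies on orbifolds with isolated singularities, as in \cite{CTZh}) — using the uniform $\kappa$-noncollapsing, the bounded-curvature-at-bounded-distance estimate for $\kappa$-solutions, and the trace Harnack/derivative estimates — a subsequence converges in the pointed (orbifold) Cheeger--Gromov sense to a limit ancient $\kappa$-orbifold solution $(X^{(\infty)},g_t^{(\infty)},x_0^{(\infty)})$. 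The limit still has nonnegative curvature operator and, being an ancient $\kappa$-solution, falls under Theorem \ref{t6.2}. Passing $y_k\to y_\infty$ with $d_0(x_0^{(\infty)},y_\infty)\le\rho$, the limit has $\nu^{(\infty)}(y_\infty)=0$, i.e. a nontrivial null eigenvector of the curvature operator at $y_\infty$. By part (1) of Theorem \ref{t6.2} this forces $X^{(\infty)}$ to be isometric to $S^3/\Gamma\times\mathbb{R}$, $C_\Gamma^\tau$ or $C_{II}$ — none of which is the smooth-curvature limit of \emph{noncompact, strictly positively curved} solutions with the chosen basepoint normalization: the strong maximum principle for the curvature operator under Ricci flow propagates the null eigenvector backward in time and splits off a line, so each $X^{(k)}$ would itself have to split, contradicting positivity of the curvature operator on $X^{(k)}$. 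Hence no such sequence exists and the bound holds for every $\rho$; defining $\lambda(\rho)$ as this uniform infimum (and, if desired, replacing it by a smaller smooth decreasing function) gives the universal function.

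The hard part will be ensuring that the compactness machinery genuinely applies in the orbifold-with-isolated-singularities setting and that the basepoint normalization survives to the limit — concretely, that the rescaled solutions do not drift so that $x_0^{(k)}$ escapes to a cylindrical end where curvature is comparable but the ``cap'' structure is lost, and that the limit is not a cylinder merely because the basepoints were chosen badly. This is handled by anchoring $x_0$ at the tip of the $\epsilon$-cap (where $R$ is controlled from below by $C_2^{-1}$ times its value elsewhere on the cap by the last sentence of Theorem \ref{t6.2}, so curvature stays bounded and bounded away from zero near $x_0^{(k)}$ after normalization), which keeps the pointed limit noncompact with a genuine cap and not a pure neck. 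A secondary technical point is justifying the backward propagation/splitting step on orbifolds: the strong maximum principle argument of Hamilton for the curvature operator is local and so is unaffected by the isolated singular points, and the resulting local isometric splitting globalizes exactly as in the manifold case because the singular set is of codimension $\ge 3$ and cannot obstruct the de Rham decomposition. With these points in place the contradiction is clean and the proposition follows.
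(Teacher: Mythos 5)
Your overall strategy — fix a basepoint in the cap region, normalize $R(x_0,t_0)=1$, argue by contradiction via pointed compactness, and classify the degenerate limit using the null-eigenvector dichotomy of Theorem \ref{t6.2} (equivalently Hamilton's strong maximum principle) — is the same as the paper's, and most of the setup is sound. However, there is a genuine logical error in your final contradiction step. You write that the null eigenvector in the limit, via the strong maximum principle, ``splits off a line, so each $X^{(k)}$ would itself have to split, contradicting positivity of the curvature operator on $X^{(k)}$.'' This does not follow: the strong maximum principle is applied to the \emph{limit} and splits the \emph{limit}; it transfers nothing back to the members of the approximating sequence. Indeed, strictly positively curved solutions can and routinely do converge to split limits (this is precisely what happens along a neck going to infinity), so the existence of a split limit in no way forces the $X^{(k)}$ to split, and no contradiction with positivity of $\mathcal{R}^{(k)}$ arises from this line of reasoning.

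The contradiction that actually works — and that the paper uses — is topological rather than infinitesimal. The paper first passes to the orbifold universal cover $\tilde X\cong\mathbb{R}^{4}$ and chooses the basepoint $\tilde x_{0}$ inside a $\Gamma$-invariant $\epsilon$-cap of bounded normalized diameter $\tilde D$ (so in the contradiction sequence $\tilde M^{(i)}\cong\mathbb{R}^{4}$). The limit, having a null eigenvector, must be isometric to $S^{3}/\Gamma'\times\mathbb{R}$ or a noncompact $\mathbb{Z}_{2}$-quotient $C^{\tau}_{\Gamma'}$. For $R_{0}\gg D>\tilde D$, the limit ball $B_{0}(\check x_{0},R_{0})$ then either has two ends (cylinder case) or is a non-orientable cap (quotient case). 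But $B_{0}(\tilde x_{0}^{(i)},R_{0})\subset\tilde M^{(i)}\cong\mathbb{R}^{4}$ is, for all large $i$, an orientable cap with exactly one neck-like end, and by pointed $C^{\infty}_{\mathrm{loc}}$ convergence these balls are eventually diffeomorphic to the limit ball. That is the contradiction. Your paragraph about ``anchoring $x_{0}$ at the tip'' shows you see the relevant issue (the limit must retain a cap-like ball around the basepoint), but you never actually invoke it; instead you substitute the incorrect splitting claim. To fix your argument, replace the sentence asserting that each $X^{(k)}$ splits with the universal-cover/topological-incompatibility argument, and add the step of lifting to $\tilde X\cong\mathbb{R}^{4}$ (and passing back down at the end), which the paper needs precisely so the topology of the balls in the approximating sequence is under control.
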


\begin{proof}[Proof of Proposition \ref{p6.3}]

By Corollary 3.6 of \cite{CTZh}, $X$ is diffeomorphic to some $C_{\Gamma}$.
We denote the orbifold universal cover of $X$ by $\tilde{X}$, which is diffeomorphic to $\mathbb{R}^{4}$.
Let $\pi:\tilde{X}\rightarrow{X}$ be the quotient map, and $\tilde{g}_{t}=\pi^{\ast}g_{t}$.
We denote $\tilde{d}_{t_0}$ the distance function with respect to $\tilde{g}_{t_0}$.





From the argument in the proof of Theorem 3.7 of \cite{CTZh}, for $\epsilon>0$ small enough, we can always find a $\Gamma$-invariant $\epsilon$-cap $\tilde{\Omega}$ in $(\tilde{X},\tilde{g}_{t_0})$ such that: \\
\text{}\ \ \ i) every point outside $\tilde{\Omega}$ is a center of a $\Gamma$-invariant $\epsilon$-neck, \\ \text{}\ \ \ ii) there exits a point $\tilde{x}_{0}$ such that for every $\tilde{z}\in\tilde{\Omega}$, $\tilde{d}_{t_0}(\tilde{x}_{0},\tilde{z})$ is bounded from above by $\tilde{R}(\tilde{x}_{0},t_0)^{-\frac{1}{2}}\tilde{D}$, where $\tilde{D}=\tilde{D}(\epsilon)$ is a constant depending on $\epsilon$.


In the following, we fix a sufficiently small $\epsilon$, hence also fix $\tilde{D}=\tilde{D}(\epsilon)$. Rescale the metric $\tilde{g}_{t}$ and shift the time   $t_0$ to $0$ such that ${\tilde{R}}({\tilde{x}}_{0},0)=1$.
Then $\tilde{d}_{0}(\tilde{x}_{0},\tilde{z})\leq\tilde{D}$ for every $\tilde{z}\in\tilde{\Omega}$.
For every $D>\tilde{D}$, we denote $\tilde{\mathcal{C}}_{D}:=\{\tilde{z}|\tilde{d}_{0}(\tilde{x}_{0},\tilde{z})<{D}\}$.
In the following, we will prove that the smallest eigenvalue of the curvature operator $\tilde{\mathcal{R}}_{0}$ on $\tilde{\mathcal{C}}_{D}$ has a positive lower bound $\tilde{\lambda}(D)$.

Suppose this is not true, then we can find a sequence of ancient $\kappa$-solutions $(M^{(i)},\tilde{g}_{t}^{(i)})$, $t\in(-\infty,0]$, and two sequences of points $\tilde{x}_{0}^{(i)},\tilde{x}^{(i)}\in{M}^{(i)}$ such that:\\
\text{}\ \ \ i) $M^{(i)}$ is diffeomorphic to $\mathbb{R}^{4}$, $\tilde{g}_{t}^{(i)}$ has positive curvature operator,\\
\text{}\ \ \ ii) $\tilde{R}^{(i)}(\tilde{x}_{0}^{(i)},0)=1$,\\
\text{}\ \ \ iii) $\tilde{x}_{0}^{(i)}$ lies in a $\Gamma$-invariant $\epsilon$-cap $\tilde{\Omega}^{(i)}$ with $\tilde{d}_{0}(\tilde{x}_{0}^{(i)},\tilde{z})\leq\tilde{D}$ for every $\tilde{z}\in\tilde{\Omega}^{(i)}$, \\
\text{}\ \ \ iv) $\tilde{d}_{0}(x^{(i)},\tilde{x}_{0}^{(i)})\leq{D}$ and the smallest eigenvalue of the curvature operator at $(\tilde{x}^{(i)},0)$ converges to zero as $i\rightarrow\infty$.

By Theorem 3.5 of \cite{CZh1}, $\tilde{g}_{t}^{(i)}$ are all $\kappa_{0}$-noncollapsed for all scales for some universal $\kappa_{0}$.
By the $\kappa_{0}$-noncollapsed, Proposition 3.6 of \cite{CZh1}, the Li-Yau-Hamilton inequality and Hamilton's compactness theorem, we can extract a $C^{\infty}_{\textmd{loc}}$ converging pointed subsequence, still denote it by $(M^{(i)},\tilde{x}_{0}^{(i)},\tilde{g}_{t}^{(i)})$,
such that the limit $(\check{M}^{4},\check{x}_{0},\check{g}(t))$, $t\in(-\infty,0]$, is a noncompact ancient $\kappa_{0}$-solution to the Ricci flow with nonnegative curvature operator and satisfies the restricted isotropic curvature pinching condition.
On the other hand, $(\tilde{x}^{(i)},0)$ converge to a point $(\check{x},0)$ where the smallest eigenvalue of the curvature operator vanish.
Hence, by Hamilton's strong maximum principle and the restricted isotropic curvature pinching condition, one can prove that $(\check{M}^{4},\check{g}(t))$ is an evolving cylinder $S^{3}\times\mathbb\mathbb\mathbb{R}$ or a noncompact metric quotient of the cylinder (see Lemma 3.2 of \cite{CZh1}).

Recall that a noncompact metric quotient of the round cylinder is isometric to the product metric on $S^{3}/\Gamma\times\mathbb{R}$ or the $\mathbb{Z}_{2}$ quotient of $S^{3}/\Gamma\times\mathbb{R}$ ($\Gamma$ may be trivial).
If $(\check{M}^{4},\check{g}(0))$ is diffeomorphic to $S^{3}/\Gamma\times\mathbb{R}$, then the geodesic ball $B_{0}(\check{x}_{0},R_{0})$ has two ends for large $R_{0}$.
If $(\check{M}^{4},\check{g}(0))$ is diffeomorphic to the $\mathbb{Z}_{2}$ quotient of $S^{3}/\Gamma\times\mathbb{R}$, then the geodesic ball $B_{0}(\check{x}_{0},R_{0})$, for large $R_{0}$, is diffeomorphic to a cap of the form $C^{\tau}_{\Gamma}$, which is not orientable.
On the other hand, if we choose $R_{0}\gg{D>\tilde{D}}$, then for every $i$, $B_{0}(\tilde{x}_{0}^{(i)},R_{0})$ is a cap diffeomorphic to $\R^{4}$ and has one neck-like end. This is a contradiction. This contradiction shows that the estimate is true for the orbifold universal cover. The result follows by passing down to the quotient.
\end{proof}

The following corollary is easy to see:

\begin{cor} \label{posi-cap}

For $\epsilon>0$ small enough and $C>0$, there exists a positive number $\tilde{\epsilon}=\tilde{\epsilon}(\epsilon,C)$, depending on $\epsilon$ and $C$, satisfying the following property. Let $(\mathcal{C},g)$ be a cap  $\tilde{\epsilon}$-close to an $\epsilon$-cap $(\tilde{\mathcal{C}},g_{t_0})$ in an ancient $\kappa$-orbifold solution with positive curvature operator such that there exists some $x_{0}\in\tilde{\mathcal{C}}$ with $d_{t_{o}}(x_{0},z)\leq{R}(x_{0},t_0)^{-\frac{1}{2}}{C}$ for every $z\in\tilde{\mathcal{C}}$. Then $(\mathcal{C},g)$ has positive curvature operator.

\end{cor}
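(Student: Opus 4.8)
The plan is to combine Proposition \ref{p6.3} with a compactness-contradiction argument in the same spirit as the proof of that proposition. First I would reformulate the statement: Proposition \ref{p6.3} gives a universal function $\lambda$ such that on any noncompact ancient $\kappa$-orbifold solution $(X,g_t)$ with positive curvature operator, and at any time $t_0$, the smallest eigenvalue of $\mathcal{R}$ at $(y,t_0)$ exceeds $\lambda(d_{t_0}(x_0,y)R(x_0,t_0)^{1/2})R(x_0,t_0)$ for a suitable base point $x_0$. Now in the setting of the corollary, the $\epsilon$-cap $(\tilde{\mathcal{C}},g_{t_0})$ sits inside such a solution, and the hypothesis $d_{t_0}(x_0,z)\le R(x_0,t_0)^{-1/2}C$ for all $z\in\tilde{\mathcal{C}}$ means that every point of $\tilde{\mathcal{C}}$ has rescaled distance at most $C$ from $x_0$. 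Hence by Proposition \ref{p6.3} the smallest eigenvalue of the curvature operator on all of $\tilde{\mathcal{C}}$ is bounded below by $\lambda(C)R(x_0,t_0) =: 2\delta_0 > 0$ (using that $\lambda$ is, after possibly replacing it by its decreasing minorant, nonincreasing, so $\lambda(d)\ge\lambda(C)$ for $d\le C$).

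Next I would use the fact that after rescaling so that $R(x_0,t_0)=1$, the $\epsilon$-cap $(\tilde{\mathcal{C}},g_{t_0})$ has scalar curvature bounded above and below by universal constants (by the last sentence of Theorem \ref{t6.2}, the scalar curvature on an $\epsilon$-cap is between $C_2^{-1}R(x_0,t_0)$ and $C_2 R(x_0,t_0)$) and has uniformly bounded geometry on the ball of rescaled radius $C$ — indeed on $\epsilon$-necks and $\epsilon$-caps all curvature derivatives are controlled. The curvature operator being $\ge \delta_0$ (after rescaling) is an open condition in the $C^2$-topology of metrics: if $(\mathcal{C},g)$ is $\tilde\epsilon$-close to $(\tilde{\mathcal{C}},g_{t_0})$ in $C^{[1/\tilde\epsilon]}$, then the curvature operator of $g$ differs from that of $g_{t_0}$ in operator norm by at most $C(\epsilon)\tilde\epsilon$ pointwise, and choosing $\tilde\epsilon=\tilde\epsilon(\epsilon,C)$ small enough that $C(\epsilon)\tilde\epsilon<\delta_0$ forces the curvature operator of $g$ to be $\ge\delta_0>0$ everywhere on $\mathcal{C}$. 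This is the content of "easy to see."

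Concretely, the steps are: (1) apply Proposition \ref{p6.3} at time $t_0$ with base point $x_0$ to get, after rescaling $R(x_0,t_0)=1$, a uniform lower bound $\lambda(C)$ for the least eigenvalue of $\mathcal{R}_{g_{t_0}}$ over $\tilde{\mathcal{C}}$; (2) note the metric $g_{t_0}$ restricted to $\tilde{\mathcal{C}}$ has uniformly (in $\epsilon$) bounded curvature and covariant derivatives of curvature up to order $[1/\tilde\epsilon]$, so a $\tilde\epsilon$-perturbation of $g_{t_0}$ perturbs the curvature operator by $O(\tilde\epsilon)$ in the sup norm; (3) choose $\tilde\epsilon$ small relative to $\lambda(C)$ so that positivity of $\mathcal{R}$ persists under the perturbation; (4) since $g$ is $\tilde\epsilon$-close to the rescaled $g_{t_0}$, conclude $\mathcal{R}_g>0$ on $\mathcal{C}$. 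The only mild subtlety — and the thing to be careful about rather than a genuine obstacle — is that Proposition \ref{p6.3} is stated with $\lambda$ a general positive function, so one must first replace it by the (still positive) nonincreasing function $d\mapsto\inf_{0<s\le d}\lambda(s)$ to be entitled to the uniform bound $\lambda(C)$ over a $C$-ball; this is harmless. There is no real hard step here: the corollary is just the combination of the interior eigenvalue estimate of Proposition \ref{p6.3} with the openness of the positivity condition under small perturbations in a strong topology, which is why it is phrased as immediate.
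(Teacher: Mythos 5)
Your strategy is the right one, and since the paper gives no proof (it is labeled ``easy to see''), there is nothing to compare against except the intended spirit, which you clearly capture: Proposition~\ref{p6.3} gives a quantitative positive lower bound on the smallest eigenvalue of $\mathcal{R}$ over a region of bounded rescaled diameter, scalar curvature is uniformly comparable on an $\epsilon$-cap by Theorem~\ref{t6.2}, and positive-definiteness of the curvature operator is an open condition under $C^{2}$ perturbations, so one takes $\tilde\epsilon$ small relative to the Proposition~\ref{p6.3} bound. You also correctly flag the harmless technicality of replacing $\lambda$ by its nonincreasing minorant.

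One step is glossed over and deserves a sentence. Proposition~\ref{p6.3} does not say the eigenvalue estimate holds with \emph{every} base point; it produces a specific point, call it $x_{0}^{*}$, near the tip of the cap region, and gives the bound in terms of $d_{t_0}(x_{0}^{*},y)R(x_{0}^{*},t_0)^{1/2}$ and $R(x_{0}^{*},t_0)$. The $x_{0}$ in the corollary's hypothesis is a different point, the one witnessing the bounded rescaled diameter of $\tilde{\mathcal{C}}$, so you cannot simply plug it into Proposition~\ref{p6.3}. To close the gap one observes that the core of any $\epsilon$-cap in a noncompact ancient $\kappa$-orbifold solution with positive curvature operator must meet the tip region (otherwise $\mathcal{C}$ would be a union of necks, diffeomorphic to $S^{3}/\Gamma\times(0,1)$ rather than to a cap); hence $x_{0}^{*}$ lies within rescaled distance $O(\tilde D(\epsilon))$ of $\tilde{\mathcal{C}}$, and since scalar curvatures across the cap are within a factor $C_{2}(\epsilon)$ of each other, every $y\in\tilde{\mathcal{C}}$ satisfies $d_{t_0}(x_{0}^{*},y)R(x_{0}^{*},t_0)^{1/2}\le C'(\epsilon,C)$ and $R(x_{0}^{*},t_0)\ge C_{2}^{-1}R(x_{0},t_0)$. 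With those two facts the rest of your argument goes through verbatim. This is a minor omission rather than a flaw in the approach.
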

\subsection{Canonical neighborhood assumption}

Let $\epsilon>0$ be a constant,  $r:[0,\infty)\rightarrow\mathbb{R}^{+}$ a non-increasing positive function.
As in \cite{CTZh}, we say that a Ricci flow with surgery $(M,g(t))$, $t\in[0,b]$, satisfies the \textbf{$\epsilon$-canonical neighborhood assumption with parameter $r$,}
if there exist two constants $C_{1}=C_{1}(\epsilon),C_{2}=C_{2}(\epsilon)$ depending only on $\epsilon$ such that every point $(x,t)\in{M}\times[0,b]$ with $R(x,t)\geq{r(t)}^{-2}$ has an open neighborhood $B$, called an $\epsilon$-canonical neighborhood,
satisfying the properties that $B_{t}(x,r)\subset{B}\subset{B}_{t}(x,2r)$ with $0<r<C_{1}R(x,t)^{-\frac{1}{2}}$, and one of the followings:

\begin{description}
  \item[(a)] $B$ is an $\epsilon$-neck around $(x, t)$,
  \item[(b)] $B$ is an $\epsilon$-cap,
  \item[(c)] at time $t$, $x$ lies  in a  compact connected component  with positive curvature operator.
\end{description}
Moreover, for cases (a) and (b), the scalar curvature in $B$ at time $t$ is between $C^{-1}_{2}R(x,t)$ and $C_{2}R(x,t)$, and satisfies the gradient estimate $|\nabla{R}|<\eta{R}^{\frac{3}{2}}$ and $|\frac{\partial{R}}{\partial{t}}|<\eta{R}^{2}$, where $\eta$ is a universal constant.

The existence of canonical neighborhoods of Ricci flow with surgery on four-dimensional orbifolds with PIC  was established in \cite{CZh1} and \cite{CTZh}.

From the proof of Lemma 4.5 in \cite{CTZh} and Proposition 5.4 in \cite{CZh1}, it can be shown that  the $\epsilon$-caps $(\mathcal{C},g)$ in the $\epsilon$-canonical neighborhood assumption actually fall into one of the following types:

\begin{description}
        \item[A] $(\mathcal{C}, g)$ is diffeomorphic to $\mathbb{R}^{4}$ or $C_{\Gamma}$, and $g$ is $\tilde{\epsilon}$-close to a corresponding piece of an ancient $\kappa$-orbifold solution with positive curvature operator $\mathbb{R}^{4}$ or $C_{\Gamma}$, where $\tilde{\epsilon}=\tilde{\epsilon}(\epsilon,2C_{1}(\epsilon))$ is the constant in Corollary \ref{posi-cap}. Hence, $(\mathcal{C}, g)$ has positive curvature operator.
        \item[B] $(\mathcal{C}, g)$ is diffeomorphic to $C_{\Gamma}^{\tau}$ or $C_{II}$, and there exists a double covering $\varphi_{\mathcal{C}}:S^{3}/{\Gamma}\times(-\frac{3}{\epsilon}-10,\frac{3}{\epsilon}+10)\rightarrow\mathcal{C}$ with $\varphi_{\mathcal{C}}(\tau(x),-t)=\varphi_{\mathcal{C}}(x,t)$, or $\varphi_{\mathcal{C}}:S^{3}\times(-\frac{3}{\epsilon}-10,\frac{3}{\epsilon}+10)\rightarrow\mathcal{C}$ with $\varphi_{\mathcal{C}}(\tau(x),-t)=\varphi_{\mathcal{C}}(x,t)$, such that $h^{-2}\varphi_{\mathcal{C}}^{\ast}g$ is  $\epsilon-$ close to $h_{std}$, where $h=R_{g}(z)^{-\frac{1}{2}}$ for some $z\in\varphi_{\mathcal{C}}(S^{3}/{\Gamma}\times\{-\frac{2}{\epsilon}\})$ or $z\in\varphi_{\mathcal{C}}(S^{3}\times\{-\frac{2}{\epsilon}\})$.
        \item[C] $(\mathcal{C}, g)$ is diffeomorphic to $\mathbb{R}^{4}$ or $C_{\Gamma}$, and $g$ is $\epsilon$-close to the corresponding piece of $(\mathbb{R}^{4},g_{std})$ or $(C_{\Gamma},g_{std})$ in the sense that there exists a diffeomorphism $\varphi_{\mathcal{C}}:s_{std}^{-1}((-\frac{3}{\epsilon}, A_{0}])\rightarrow\mathcal{C}$ such that the metric $h^{-2}\varphi_{\mathcal{C}}^{\ast}g$ is $\epsilon-$close to  $g_{std}$, where $h=R_{g}(z)^{-\frac{1}{2}}$ for some $z\in\varphi_{\mathcal{C}}[s_{std}^{-1}(-\frac{2}{\epsilon})]$, $A_{0}$ is the constant such that $(\mathbb{R}^{4}\setminus{B}(p,A_{0}),g_{std})$ is isometric to a half cylinder.
\end{description}

The existence of $\epsilon$-canonical neighborhood can be proved by an argument contradiction as in \cite{CZh1}, \cite{CTZh}. We roughly describe the proof here. The purpose is to indicate to the reader how these types of neighborhoods arise.
If the $\epsilon$-canonical neighborhood assumption is not true, one can find a sequence of Ricci flows with surgery on orbifolds with points $(x_{i},t_{i})$ violating the $\epsilon$-canonical neighborhood assumption with parameters $r_{i}\rightarrow0$, then we rescale the flows around $(x_{i},t_{i})$ so that the scalar curvature at $x_{i}$ becomes $1$.
A delicate argument (see the proof of Lemma 4.5 in \cite{CTZh} and the proof of Proposition 5.4 in \cite{CZh1} for details) guarantees that we can find a subsequence converging to a smooth limit on the maximal time interval $(-t_{\textmd{max}}, 0]$.
If $t_{\textmd{max}}<\infty$, there will be a surgery region near $x_{i}$  for $i$ large, and a neighborhood of $x_{i}$ is close to a piece of $(\mathbb{R}^{4},g_{std})$ or $(C_{\Gamma},g_{std})$, hence is covered by $\epsilon$-necks or $\epsilon$-caps of type $\mathbf{C}$.
If $t_{\textmd{max}}=\infty$, then one can prove that the limiting Ricci flow is an ancient $\kappa$-orbifold solution, and a neighborhood of $x_{i}$ is close to a region in an ancient $\kappa$-orbifold solution for $i$ large.
As described in Theorem \ref{t6.2}, all possible ancient $\kappa$-orbifold solutions can be classified.
If the limiting ancient solution is in case $(1)$, then $x_{i}$ is covered by an $\epsilon$-necks or an $\epsilon$-caps of type $\mathbf{B}$ for $i$ large. If the limit is in case $(2)$, then $x_{i}$ is covered by a compact connected component of positive curvature operator, an  $\epsilon$-necks or an $\epsilon$-caps of type $\mathbf{A}$ for $i$ large.

\section{Proof of Theorem \ref{t1.2}} \label{s7}
Our proof will follow the same strategy  as in \cite{Mar}. Similar results on three dimensional manifolds with positive scalar curvature were obtained by Marques in \cite{Mar}.

We first consider metrics on compact orbifolds where  every point is covered by an $\epsilon$-canonical neighborhood.

\begin{prop} \label{p7.1}

Suppose $(X,g)$ is a compact connected four-orbifold with isolated singularities and with PIC  such that every point $x\in(X,g)$ has an $\epsilon$-canonical neighborhood.  Then $g$ is isotopic to a canonical metric,
and $X$ is diffeomorphic to one of the following orbifolds:

\begin{description}
  \item[1)] $S^{4}/\Gamma$ or $S^{4}/\langle\Gamma,\hat{\tau}\rangle$, where $\Gamma\subset{SO(4)}$ ($\Gamma$ may be trivial),
  \item[2)] $\#(S^{4}/\{1,\zeta\},S^{4}/\{1,\zeta\})$ or $\#(S^{4}/\{1,\zeta\},\mathbb{RP}^{4})$, where the connected sum is performed at smooth points,
  \item[3)] $(S^{3}\times\mathbb{R})/G$, where $G$ a cocompact fixed point free discrete isometric  subgroup of  $(S^{3}\times\mathbb{R},h_{std})$.
\end{description}

\end{prop}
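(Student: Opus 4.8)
The plan is to read off both the diffeomorphism type of $X$ and the isotopy class of $g$ from the way the $\epsilon$-canonical neighborhoods fit together, following the three-dimensional argument of \cite{Mar}. First suppose some point of $X$ lies in a compact connected component with positive curvature operator. Since $X$ is connected, $X$ \emph{is} this component, so $g$ has positive curvature operator everywhere. Running the Ricci flow on the orbifold $X$ preserves positive curvature operator, hence PIC, and by Hamilton's convergence theorem (in its orbifold version) the normalized flow deforms $g$ to a metric of constant positive curvature; hence $X\cong S^{4}/\tilde{\Gamma}$ for a finite $\tilde{\Gamma}\subset O(5)$ acting with isolated fixed points, which by \cite{CTZh} has at most two orbifold singularities, so $X$ is of type 1). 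Moreover $g$ is isotopic to $h_{round}$, which by Proposition \ref{p4.3} (standard metrics are locally conformally flat with positive scalar curvature) is isotopic to a canonical metric, finishing this case.

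In the remaining case no point of $X$ has a neighborhood of the third kind, so every point lies in an $\epsilon$-neck or an $\epsilon$-cap of type $\mathbf{A}$, $\mathbf{B}$ or $\mathbf{C}$; by compactness finitely many of these cover $X$. Let $\mathcal{N}\subset X$ be the open set of centers of $\epsilon$-necks. Then $\overline{\mathcal{N}}$ is a compact $4$-submanifold with boundary covered by $\epsilon$-necks, so each of its components is diffeomorphic to $S^{3}/\Gamma\times[0,1]$ or $S^{3}/\Gamma\times S^{1}$, while $X\setminus\mathcal{N}$ is contained in the cores of finitely many $\epsilon$-caps. Since an $\epsilon$-cap has a single neck-like end, examining the adjacency graph whose vertices are the caps and whose edges are the interval components of $\overline{\mathcal{N}}$, and using that $X$ is connected, one finds that either (i) $X$ has no $\epsilon$-caps, $\overline{\mathcal{N}}=X$, and $X\cong S^{3}/\Gamma\times_{f}S^{1}$, or (ii) $X=\mathcal{C}_{1}\cup T\cup\mathcal{C}_{2}$ with $T$ a neck region diffeomorphic to $S^{3}/\Gamma\times[0,1]$ and $\mathcal{C}_{1},\mathcal{C}_{2}$ two $\epsilon$-caps whose neck-like ends have cross-section $S^{3}/\Gamma$. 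Matching the cross-sections, the pair $(\mathcal{C}_{1},\mathcal{C}_{2})$ is, up to order, one of $(C_{\Gamma},C_{\Gamma})$, $(C_{\Gamma},C^{\tau}_{\Gamma})$, $(C_{\{1\}},C_{II})$, $(C^{\tau_{1}}_{\Gamma},C^{\tau_{2}}_{\Gamma})$, $(C^{\tau}_{\{1\}},C_{II})$ or $(C_{II},C_{II})$, so $X$ is respectively $S^{4}/\Gamma$, $S^{4}/\langle\Gamma,\hat{\tau}\rangle$, $S^{4}/\{1,\zeta\}$, $(S^{3}\times\mathbb{R})/G$ with $p_{2}(G)=D(\infty)$, $\#(S^{4}/\{1,\zeta\},\mathbb{RP}^{4})$ or $\#(S^{4}/\{1,\zeta\},S^{4}/\{1,\zeta\})$; together with (i) this gives the list 1)--3).

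It remains to exhibit the isotopy from $g$ to a canonical metric. On the neck region $T$ --- respectively on all of $X$ in case (i) --- the metric is everywhere $\epsilon$-close to a round cylinder, and by patching the conformal deformations of Proposition \ref{p4.2}, the warped-product deformations of Proposition \ref{p4.7}, and the splitting construction of Lemma \ref{l5.5}, one deforms $g$ through PIC metrics until $T$ (respectively $X$) carries an exact warped product $dr^{2}+\omega^{2}(r)\,d\theta^{2}$, with $\omega$ periodic --- and, when $X$ is non-orientable, also reflectively symmetric --- in case (i). For the caps: a cap of type $\mathbf{A}$ has positive curvature operator by Corollary \ref{posi-cap}; a cap of type $\mathbf{C}$ is $\epsilon$-close to $(\mathbb{R}^{4},g_{std})$ or $(C_{\Gamma},g_{std})$ and one appeals to Lemma \ref{l5.4} and its orbifold analogue; a cap of type $\mathbf{B}$ has a double cover close to a cylinder so the warped-product deformation applies equivariantly; in each case the cap metric is isotopic, rel an exact cylindrical collar, to a rotationally symmetric model. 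Having brought $g$ to this form, in case (i) Proposition \ref{p4.8} gives an isotopy to a standard metric on $S^{3}/\Gamma\times_{f}S^{1}\cong(S^{3}\times\mathbb{R})/G$, which is isotopic to a canonical metric by Proposition \ref{p4.4}. In case (ii), inserting an exact cylinder in the middle of $T$ (via Proposition \ref{orbconnsum} and Remark \ref{remark2.2}) realizes $(X,g)$ as an \textmd{M-W} connected sum of two standard model pieces, and Propositions \ref{p4.7}--\ref{p4.13} then produce the desired isotopy to a canonical metric.

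The hard part will be the deformation in the preceding paragraph. Since PIC is not a convex condition, the local deformations on individual necks and caps cannot simply be averaged together; one must patch them across the transition regions where neck charts overlap and where a cap's neck-like end meets $T$, and verify that the interpolations remain PIC throughout. This is where the quantitative $\epsilon$-closeness of $g$ to the fixed models --- the round cylinder $h_{std}$ and the surgery cap $g_{std}$ --- and the curvature estimates established in Sections \ref{s4} and \ref{s5} enter, exactly as in the corresponding step of \cite{Mar}.
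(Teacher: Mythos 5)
Your overall architecture matches the paper's: first dispose of the positive-curvature-operator case by Hamilton's convergence and Proposition~\ref{p4.4}, then decompose the rest of $X$ into necks and (at most two) caps, classify the diffeomorphism type, and produce the isotopy by cutting, deforming, and reassembling through the M--W machinery. Your cap-pair enumeration and the resulting list 1)--3) are correct.

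The genuine gap is in the sentence ``a cap of type $\mathbf{A}$ has positive curvature operator by Corollary~\ref{posi-cap}; \ldots\ in each case the cap metric is isotopic, rel an exact cylindrical collar, to a rotationally symmetric model.'' For type~$\mathbf{C}$ this is exactly Lemma~\ref{l5.4}, and for type~$\mathbf{B}$ an equivariant warped-product argument is plausible, but for type~$\mathbf{A}$ you have established nothing of the sort: a region that is merely $\tilde{\epsilon}$-close to a piece of an ancient $\kappa$-orbifold solution with positive curvature operator is \emph{not} known to be isotopic \emph{rel} a fixed cylindrical collar to anything rotationally symmetric, and neither Corollary~\ref{posi-cap} nor any result in Sections~\ref{s4}--\ref{s6} says so. The paper deliberately avoids this issue by reversing your order of operations: it performs the surgery \emph{first} (Proposition~\ref{p5.1}, \ref{p5.2}), producing a \emph{closed} spherical orbifold $\mathcal{S}_1$ with everywhere positive curvature operator, deforms the metric on that closed piece \emph{freely} (via normalized Ricci flow as in Case~1, with no boundary to respect), and only then reassembles using Lemma~\ref{l5.5} and Proposition~\ref{p4.13}, which tolerate isotopies of the pieces that are not fixed near the gluing region. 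Your ``deform caps rel collar, then cut'' order needs a rel-boundary statement that is neither proved nor obviously provable; the ``cut, then deform each closed piece, then reassemble'' order sidesteps it entirely.

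Two further, smaller points. First, in case~(i) you assert that $g$ can be deformed on all of the closed tube $X\cong S^3/\Gamma\times_f S^1$ to an exact warped product; but the chain of overlapping necks in Lemma~\ref{interpolateof2neck} (which you never cite by name, though it is the workhorse here) produces a warped product only on an \emph{interval} tube with prescribed linear homotopies at the two ends, and going all the way around the loop there is a monodromy/matching condition at the seam. The paper again avoids this by cutting the circle tube along one slice, applying the interval version, and reassembling via Lemma~\ref{l5.5} and Proposition~\ref{p4.10}. Second, your dichotomy ``no caps'' versus ``two caps joined by a nondegenerate tube $T\cong S^3/\Gamma\times[0,1]$'' misses the degenerate situation where the two caps overlap so much that no such $T$ exists; the paper handles this by observing that the union $\mathcal{C}\cup\tilde{\mathcal{C}}$ then has positive curvature operator and one is back in Case~1. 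You should make that reduction explicit before asserting the existence of $T$.
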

Before the proof of Proposition \ref{p7.1}, we need a lemma:

\begin{lem}\label{interpolateof2neck}
There exists a universal constant $\epsilon_{1}>0$ with the following property. If $0<\epsilon<\epsilon_{1}$, and $N_{1},N_{2}$ are two $\epsilon$-necks in $(X,g)$ with scales $h_{N_{1}},h_{N_{2}}$ respectively.
If there is a point $z\in$ ${s}^{-1}_{N_{1}}(-\frac{0.95}{\epsilon},\frac{0.95}{\epsilon})\bigcap{s}^{-1}_{N_{2}}(-\frac{0.95}{\epsilon},\frac{0.95}{\epsilon})$ with $g(\frac{\partial}{\partial{s_{N_{1}}}},\frac{\partial}{\partial{s_{N_{2}}}})>0$ at $z$.
In addition, ${s}^{-1}_{N_{1}}(-\frac{1}{\epsilon},s_{N_{1}}(z)-2)\bigcap{s}^{-1}_{N_{2}}(s_{N_{2}}(z)+2,\frac{1}{\epsilon})=\emptyset$.
Then there exists a diffeomorphism $\psi:S^{3}/\Gamma\times(-\frac{1}{\epsilon},\beta)\rightarrow{N}_{1}\bigcup{N}_{2}$, $\beta=\frac{1}{\epsilon}-s_{N_{2}}(z)+s_{N_{1}}(z)$, with the following properties:
\begin{description}
  \item[1] $\psi(\theta,t)=\psi_{1}(\theta,t)$ for $(\theta,t)\in{S}^{3}/\Gamma\times(-\frac{1}{\epsilon},s_{N_{1}}(z)-\frac{0.02}{\epsilon})$,
  \item[2] $\psi(\theta,t)=\psi_{2}(A(\theta),t-\beta+\frac{1}{\epsilon})$ for $(\theta,t)\in{S}^{3}/\Gamma\times(\beta+s_{N_{2}}(z)-\frac{0.98}{\epsilon},\beta)$, where $A$ is an isometry of $(S^{3}/\Gamma,d\theta^{2})$,
  \item[3] there exists a continuous path of metrics $g_{\mu},\mu\in[0,1]$, with positive isotropic curvature on $S^{3}/\Gamma\times(-\frac{1}{\epsilon},\beta)$, such that $g_{0}=\psi^{\ast}(g)$, $g_{1}$ is a warped product metric $dt^{2}+\omega(t)^{2}d\theta^{2}$, and such that $g_{\mu}$ restricts to the linear homotopy $g_{\mu}=(1-\mu)\psi^{\ast}(g)+\mu{h}_{N_{1}}^{2}h_{std}$ for $(\theta,t)\in{S}^{3}/\Gamma\times(-\frac{1}{\epsilon},s_{N_{1}}(z)-\frac{0.02}{\epsilon})$ and to the linear homotopy $g_{\mu}=(1-\mu)\psi^{\ast}(g)+\mu{h}_{N_{2}}^{2}h_{std}$ for $(\theta,t)\in{S}^{3}/\Gamma\times(\beta+s_{N_{2}}(z)-\frac{0.98}{\epsilon},\beta)$.
\end{description}

\end{lem}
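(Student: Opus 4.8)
The plan is to treat $N_{1}\cup N_{2}$ as a single longer $\epsilon'$-neck: first build the gluing diffeomorphism $\psi$ by patching the two neck parametrizations across the slice through $z$, and then deform $\psi^{\ast}g$ to a warped product in two stages, arranging the prescribed linear homotopies near the two ends. I would begin by analyzing the transition map. On the region where $N_{1}$ and $N_{2}$ overlap, both $h_{N_{1}}^{-2}\psi_{1}^{\ast}g$ and $h_{N_{2}}^{-2}\psi_{2}^{\ast}g$ are $\epsilon$-close to $h_{std}$ in $C^{[1/\epsilon]}$, so the change of coordinates $\psi_{2}^{-1}\circ\psi_{1}$ is $C^{[1/\epsilon]-1}$-close to a ``standard'' transition map of a round cylinder, i.e.\ to a map of the form $(\theta,t)\mapsto(B\theta,\pm t+d)$ with $B\in\mathrm{Isom}(S^{3}/\Gamma,d\theta^{2})$ (the cross sections of an $\epsilon$-neck are $\epsilon$-close to round $S^{3}/\Gamma$'s of a fixed radius, and the product-structure-preserving isometries of the round cylinder are exactly of this shape). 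The hypothesis $g(\tfrac{\partial}{\partial s_{N_{1}}},\tfrac{\partial}{\partial s_{N_{2}}})>0$ at $z$ forces the sign to be $+$, and evaluating at $z$ pins down $d=s_{N_{1}}(z)-s_{N_{2}}(z)$ up to $O(\epsilon)$; this is precisely the shift that makes $\beta=\tfrac1\epsilon-s_{N_{2}}(z)+s_{N_{1}}(z)$ the length of the glued neck, and the constraints $z\in s_{N_{i}}^{-1}(-\tfrac{0.95}{\epsilon},\tfrac{0.95}{\epsilon})$ leave room inside both charts for a transition region of width $\tfrac{0.04}{\epsilon}$ about the $z$-slice.

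With this, I would take a cutoff function in the $t$-variable supported in $[\,s_{N_{1}}(z)-\tfrac{0.02}{\epsilon},s_{N_{1}}(z)+\tfrac{0.02}{\epsilon}\,]$ and interpolate between $\psi_{1}$ and $\psi_{2}$ composed with the standard transition $(\theta,t)\mapsto(B\theta,t-d)$. Choosing $\epsilon<\epsilon_{1}$ small (using the bounds on the cutoff derivatives) guarantees the interpolated map is an embedding, and the disjointness hypothesis $s_{N_{1}}^{-1}(-\tfrac1\epsilon,s_{N_{1}}(z)-2)\cap s_{N_{2}}^{-1}(s_{N_{2}}(z)+2,\tfrac1\epsilon)=\emptyset$ guarantees that its image is exactly $N_{1}\cup N_{2}$ rather than something that ``wraps around''. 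This produces $\psi\colon S^{3}/\Gamma\times(-\tfrac1\epsilon,\beta)\to N_{1}\cup N_{2}$ with properties \textbf{1} and \textbf{2} (with $A=B^{-1}$).

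For the metric deformation, write $g_{0}=\psi^{\ast}g$. By construction $g_{0}$ equals $\psi_{1}^{\ast}g$ on $S^{3}/\Gamma\times(-\tfrac1\epsilon,s_{N_{1}}(z)-\tfrac{0.02}{\epsilon})$ and equals the pullback of $g$ by $\psi_{2}\circ(\text{standard transition})$ on $S^{3}/\Gamma\times(\beta+s_{N_{2}}(z)-\tfrac{0.98}{\epsilon},\beta)$, so there it is $\epsilon$-close to $h_{N_{1}}^{2}h_{std}$, resp.\ $h_{N_{2}}^{2}h_{std}$; on the transition region in between, the closeness of $\psi_{2}^{-1}\circ\psi_{1}$ to a standard transition shows that $R_{g_{0}}^{-1}g_{0}$ (with $R_{g_{0}}$ the slowly varying scalar curvature) is $C\epsilon$-close to $h_{std}$ there as well, and the two scales $h_{N_{1}},h_{N_{2}}$ are uniformly comparable because the necks overlap at $z$, so their scalar curvatures agree up to $O(\epsilon)$, combined with the gradient estimate for $R$ inside a neck. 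Then I would run the homotopy in two stages. In the first stage, linearly interpolate $g_{0}$ to a metric $g'$ equal to $h_{N_{1}}^{2}h_{std}$ on the left end region, equal to $h_{N_{2}}^{2}h_{std}$ on the right end region, and in between a fixed metric $dt^{2}+\varphi(t)^{2}d\theta^{2}$ with $\varphi$ a chosen slowly varying function interpolating between $h_{N_{1}}$ and $h_{N_{2}}$. This path stays PIC because the set of metrics $C\epsilon$-close to a scaled round cylinder is convex and lies in the PIC region: $h_{std}$ has \emph{strictly} positive isotropic curvature (a direct computation with an orthonormal $4$-frame $e_{i}=a_{i}\partial_{t}+v_{i}$ gives isotropic curvature $=\kappa(4-2\sum a_{i}^{2})\ge 2\kappa>0$, $\kappa$ the sectional curvature of the $S^{3}$ factor, since $\sum a_{i}^{2}\le|\partial_{t}|^{2}=1$), and PIC is scale invariant; on the two end regions this is literally the prescribed linear homotopy $(1-\mu)\psi^{\ast}(g)+\mu h_{N_{i}}^{2}h_{std}$. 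In the second stage, deform the warped product $g'$ through warped products to the desired $g_{1}=dt^{2}+\omega(t)^{2}d\theta^{2}$ with the two end regions fixed, exactly as in Proposition \ref{p4.7}; since $\varphi$ changes only by the small ratio $h_{N_{1}}/h_{N_{2}}$ over a length $\sim\tfrac2\epsilon$, $|\varphi'|$ and hence $|\omega'|$ is tiny and PIC persists. Concatenating the two stages and reparametrizing $\mu$ gives property \textbf{3}; the ingredients used for the continuity of the construction are Remark \ref{r5.3} and Proposition \ref{orbconnsum}.

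The main obstacle is the uniform control in the transition region: there $\psi$ is not an honest neck chart, so the closeness of $g_{0}$ to a scaled round cylinder — and hence the preservation of PIC along the first-stage linear interpolation — must be quantified purely in terms of the $C^{[1/\epsilon]-1}$-distance of $\psi_{2}^{-1}\circ\psi_{1}$ to the standard transition and the derivative bounds of the cutoff. This is what dictates the choice of $\epsilon_{1}$, and it also relies on the fact that $h_{N_{1}}$ and $h_{N_{2}}$ are comparable with a universal constant, which is where the overlap at $z$ together with the neck gradient estimate for $R$ enters.
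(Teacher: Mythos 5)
Your proposal is correct and takes essentially the same route as the paper, which simply outlines the idea (scales of the two necks are close, both rescaled metrics are small perturbations of $h_{std}$, overlap slices are almost isometric) and then defers to Marques's Lemma 4.1 for the details. You flesh out that outline in the expected way: the transition $\psi_{2}^{-1}\circ\psi_{1}$ is close to a product isometry of the round cylinder (sign forced by $g(\partial_{s_{N_1}},\partial_{s_{N_2}})>0$, shift pinned at $z$), a cutoff interpolation of charts yields $\psi$ with the disjointness hypothesis ruling out wrap-around, and the linear homotopy from $\psi^{*}g$ to a nearby warped product preserves PIC by openness of PIC about $h_{std}$ (your direct computation of the isotropic curvature of the cylinder is correct) together with scale-invariance, using the $O(\epsilon)$ comparability of $h_{N_1}$ and $h_{N_2}$ coming from the overlap and the neck gradient estimate. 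One small remark: your ``second stage'' is superfluous — the endpoint $g'$ of the first stage is already a warped product (constant warping near each end and $\varphi$ in between), which is all that Lemma~\ref{interpolateof2neck} asks of $g_{1}$, and concatenating an extra stage with ends held fixed would in fact spoil the requirement that $g_{\mu}$ be the stated \emph{linear} homotopy on the end regions for all $\mu\in[0,1]$. Dropping the second stage leaves a clean argument matching the paper's.
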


An $\epsilon$-tube $T\subset{X}$ is a submanifold diffeomorphic to $S^{3}/\Gamma\times(0,1)$ such that every point of the tube is a center of an $\epsilon$-neck.
By suitably decomposing the $\epsilon$-tube into a sequence of overlapped $\epsilon$-necks and then applying Lemma \ref{interpolateof2neck}, we can deform the metric on the $\epsilon$-tube to a warped product metric with a deformation which restricts to linear homotopies at the two ends of the tube.

This lemma is a four-dimensional version of the Lemma 4.1 in \cite{Mar}, the readers can consult Chapter 4 of \cite{Mar} for details of the proof.  The idea is the following.
Suppose $N_{1}$ and $N_{2}$ are two adjacent $\epsilon$-necks satisfying the conditions in Lemma \ref{interpolateof2neck}.
If $\epsilon$ is small enough, the scales of $N_{1}$ and $N_{2}$ are very close to each other, and the two metrics $h_{N_{1}}^{-2}\psi^{\ast}_{N_{1}}(g)$ and $h_{N_{2}}^{-2}\psi^{\ast}_{N_{2}}(g)$ are both small pertubations of $h_{std}$, and the slices of the two necks in the overlap region $\psi_{N_{1}}(S^{3}/\Gamma\times(s_{N_{1}}(z)-4,s_{N_{1}}(z)+4))$ are almost isometric.
Then we can define the diffeomorphism $\psi$ and the deformation of metrics $g_{\mu}$ as in the proof of Lemma 4.1 of \cite{Mar}.

\begin{proof} [Proof of Proposition \ref{p7.1}]
The proof is divided into three cases.

\textbf{Case 1.}   $(X,g)$ is a compact orbifold with positive curvature operator.  Since the normalized Ricci flow converges to a metric of constant curvature (see \cite{Ham3}), we know the space is diffeomorphic to $S^{4}/G.$
Since $X$ has only isolated singularities, by an algebraic lemma of \cite{CTZh}, $X$ is either $S^{4}/\Gamma$ or $S^{4}/\langle\Gamma,\hat{\tau}\rangle$ with $\Gamma\subset SO(4)$ ($\Gamma$ may be trivial).
By Proposition \ref{p4.4}, the constant curvature metric is isotopic to a canonical metric. Hence $g$ is isotopic to a canonical metric.

In the following discussion, we assume that every point of $X$ is covered by an $\epsilon$-neck or an $\epsilon$-cap.

\textbf{Case 2.} Assume that there is no $\epsilon$-cap in $X$. Then every point is contained in an $\epsilon$-neck. Hence $X$ is diffeomorphic to some $S^{3}/\Gamma\times_{f}S^{1}$. Let $N$ be one $\epsilon$-neck with central slice $S$.
Perform surgery around  $S$ and glue surgery caps $C_{\Gamma}$ to both left and right sides of it, we get an orbifold $(\mathcal{S},\tilde{g})$ which is diffeomorphic to $S^4/\Gamma,$ with $\Gamma\subset{SO(4)}$.
If we perform \textmd{M-W} connected sum at the tips of the two surgery caps, by Lemma \ref{l5.5}, the resulting metric is isotopic to $g$.

We claim  $(\mathcal{S},\tilde{g})$ is isotopic to a metric of constant curvature. The reason is that, from Lemma \ref{interpolateof2neck} the $\epsilon$-tube in $\mathcal{S}$ can be deformed to a warped product with PIC with a deformation which restricts to linear homotopies at the two ends of this tube. By Lemma \ref{l5.4}, this deformation extends to the two surgery caps $C_{\Gamma}$.
Hence the metric $\tilde{g}$ is isotopic to a locally conformally flat metric with positive scalar curvature.
The latter is isotopic to a metric of constant curvature by Proposition \ref{p4.5}. Together with Propositions  \ref{orbconnsum}, \ref{p4.10}, we know that $(X,g)$ is isotopic to a canonical metric on $S^{3}/\Gamma\times_{f}S^{1}$.

\textbf{Case 3.} Suppose there exists an $\epsilon$-cap $\mathcal{C}.$ Denote the neck and the core of $\mathcal{C}$ by $N$ and  $Y$.  The neck $N$ is oriented so that the positive $s_{N}$-direction points towards the boundary of the cap. There exist a finite sequence of $\epsilon$-necks $\{N_{1}=N,\ldots,N_{a}\}$ with centers $x_{i}\in{s}_{N_{i}}^{-1}(0),1\leq{i}\leq{a}$, and an $\epsilon$-cap $\tilde{\mathcal{C}}$ with a core $\tilde{Y}$ and an $\epsilon$-neck $\tilde{N}=\tilde{\mathcal{C}}\setminus{\overline{\tilde{Y}}}$, such that\\
\text{}\ \ \ i) $s_{N_{i}}(x_{i+1})=\frac{0.9}{\epsilon}$ and $g(\frac{\partial}{\partial{s_{N_{i}}}},\frac{\partial}{\partial{s_{N_{i+1}}}})>0$ for all $1\leq{i}<a$,\\
\text{}\ \ \ ii) there exists $z\in{s}^{-1}_{N_{a}}(\frac{0.9}{\epsilon})$ contained in the core $\tilde{Y}$,\\
\text{}\ \ \ iii) $X=\mathcal{C}\bigcup{N_{2}}\bigcup\ldots\bigcup{N_{a}}\bigcup\tilde{\mathcal{C}}$.

We will orient the neck $\tilde{N}$ so that the positive $s_{\tilde{N}}$-direction points towards the core $\tilde{Y}$.
It is easy to see $s^{-1}_{\tilde{N}}(0)\subset\overline{Y}\bigcup{s_{N}^{-1}} (-\frac{1}{\epsilon},\frac{0.9}{\epsilon})\bigcup{N_{2}}\bigcup\ldots\bigcup{N_{a-1}}\bigcup {s_{N_{a}}^{-1}}(-\frac{1}{\epsilon},\frac{0.9}{\epsilon})$.
In the following, we denote $\mathcal{C}^{0.9}=\overline{Y}\bigcup{s}_{N}^{-1}((-\frac{1}{\epsilon},\frac{0.9}{\epsilon}))$ and $\tilde{\mathcal{C}}^{0}=\overline{\tilde{Y}}\bigcup{s}_{\tilde{N}}^{-1}((0,\frac{1}{\epsilon}))$.

Recall that we have required $\epsilon$-caps in the canonical neighborhood assumption fall into types $\mathbf{A}$, $\mathbf{B}$ or $\mathbf{C}$.  The argument is divided into the following cases, according to which types of the caps are.

\textbf{Case A-A.} $\mathcal{C}$ and $\tilde{\mathcal{C}}$ are both of type $\mathbf{A}$. In this case, $X$ is diffeomorphic to $S^{4}/\Gamma$ with $\Gamma\subset{SO(4)}.$

If $\partial\tilde{\mathcal{C}}^{0}\subset\mathcal{C}^{0.9}$, then it's easy to see that $\partial\mathcal{C}^{0.9}\subset\tilde{\mathcal{C}}^{0}$,
and $X=\mathcal{C}\bigcup\tilde{\mathcal{C}}$ has positive curvature operator. This reduces to \textbf{Case 1.}

If $\partial\tilde{\mathcal{C}}^{0}$ is not contained in $\mathcal{C}^{0.9}$, then $\partial\tilde{\mathcal{C}}^{0}\subset{s_{N_{k}}^{-1}}((-\frac{0.01}{\epsilon},\frac{0.95}{\epsilon}))$ for some $2\leq{k}\leq{a}$.
We  perform surgeries along the two slices $S_{1}=s_{N_{1}}^{-1}(0)$ and $S_{k}=s_{N_{k}}^{-1}(0.5)$, and glue surgery caps to both left and right sides of each slice as in Section \ref{s5}.
Now $X$ is decomposed into three components all diffeomorphic to $S^{4}/\Gamma.$ Denote them by  $(\mathcal{S}_{1},g_{1})$, $(\mathcal{P},g_{\mathcal{P}})$, and $(\mathcal{S}_{2},g_{2})$.

For $(\mathcal{S}_{1},g_{1})$ and $(\mathcal{S}_{2},g_{2})$, by our assumption of type $\mathbf{A}$ and Proposition \ref{p5.2}, both $g_{1}$ and $g_{2}$ have positive curvature operators, hence are isotopic to canonical metrics on $S^{4}/\Gamma$ as in \textbf{Case 1}.

Now we treat the middle piece $(\mathcal{P},g_{\mathcal{P}}).$ By Lemma \ref{interpolateof2neck}, we can deform the metric $g$ on the $\epsilon$-tube $\bigcup_{i=1}^{k}N_{i}$ to a warped product metric $dr^{2}+\omega(r)^{2}d\theta^{2}$ through PIC metrics
and the deformation restricts to linear homotopies $g_{\mu}=(1-\mu)g+\mu{h}_{N_{1}}^{2}h_{std}$ and $g_{\mu}=(1-\mu)g+\mu{h}_{N_{k}}^{2}h_{std}$ ($\mu\in[0,1]$) on the two ends $s_{N_{1}}^{-1}(-\frac{1}{\epsilon},\frac{0.02}{\epsilon})$ and $s_{N_{k}}^{-1}(\frac{0.02}{\epsilon},\frac{1}{\epsilon})$ respectively.
By Lemma \ref{l5.4},  $(\mathcal{P},g_{\mathcal{P}})$ is isotopic to a locally conformally flat metric with PIC, hence isotopic to a canonical metric on $S^{4}/\Gamma$.

Now we perform \textmd{M-W} connected sum at the tips of the surgery caps of $(\mathcal{S}_{1},g_{1})$, $(\mathcal{P},g_{\mathcal{P}})$, and $(\mathcal{S}_{2},g_{2})$ and obtain a metric $\tilde{g}$ on $X.$ On one hand, by Lemma \ref{l5.5}, $\tilde{g}$ is isotopic to $g$. On the other hand, we have shown that the metrics on these three pieces are isotopic to canonical metrics,  $\tilde{g}$ is isotopic to a canonical metric on $X=S^{4}/\Gamma$ by Proposition \ref{p4.13}.  Therefore, $g$ is isotopic a canonical metric on $S^{4}/\Gamma$.

\textbf{Case C-C.} Both $\mathcal{C}$ and  $\tilde{\mathcal{C}}$ are  of type  $\mathbf{C}$. In this case, $X$ is diffeomorphic to $S^{4}/\Gamma$.  The method is the same as in treating $(\mathcal{S},\tilde{g})$ in the proof of  \textbf{Case 2}.

\textbf{Case A-C.} $\mathcal{C}$ is of type $\mathbf{A}$, $\tilde{\mathcal{C}}$ are of type  $\mathbf{C}$. In this case, $X$ is diffeomorphic to $S^{4}/\Gamma,$  this case can be handled by a cut-past argument and results in \textbf{Cases A-A} and \textbf{C-C}.

\textbf{Case AC-B.} $\mathcal{C}$ is of type $\mathbf{A}$ or $\mathbf{C},$ $\tilde{\mathcal{C}}$ is of type $\mathbf{B}$. In this case, $X$ is diffeomorphic to spherical orbifold $S^{4}/\{1,\zeta\}$ or $S^{4}/\langle\Gamma,\hat{\tau}\rangle$ with $\Gamma\subset{SO(4)}.$
By the assumption on type $\mathbf{B}$, $\mathcal{\tilde{C}}$ is diffeomorphic to $C_{\Gamma}^{\tau}$ or $C_{II}$, and the metric $g$ outside $\mathcal{\tilde{C}}$ can be deformed to a locally conformally flat metric though metrics with positive isotropic curvature, and the deformation restricts to a linear homotopy at the end of $\mathcal{\tilde{C}}$. By a cut-past argument as before, one can show $g$ is isotopic a locally conformally flat metric on the spherical orbifold.

\textbf{Case B-B.} $\mathcal{C}$ and $\tilde{\mathcal{C}}$ are of type $\mathbf{B}$.
In this case, $X$ is diffeomorphic to $\#(S^{4}/\{1,\zeta\},\mathbb{RP}^{4})$, $\#(S^{4}/\{1,\zeta\},S^{4}/\{1,\zeta\})$ or $\#(S^{4}/\langle\Gamma,\hat{\tau}_1\rangle, S^{4}/\langle\Gamma,\hat{\tau}_2\rangle)$. By performing a cut-past argument, the metric $g$ is isotopic to an \textmd{M-W} connected sum of two orbifolds in \textbf{Case AC-B.} By invoking Proposition \ref{p4.13}, $g$ is isotopic to a canonical metric.

The proof is complete.
\end{proof}

Now we begin to prove Theorem \ref{t1.2}.

\begin{proof} [Proof of Theorem \ref{t1.2}]\indent
Let $g_{0}$ be a PIC metric on $M$.
By the theorems in Section 4 of \cite{CTZh}, there exist two sequences of non-increasing small positive numbers  $\{r_{i}\},\{\delta_{i}\}$, and a Ricci flow with surgery on orbifolds with at most isolated singularities $(X_{i},g_{i}(t))_{t\in[t_{i},t_{i+1})}$, $0\leq{i}\leq{p}$, such that:

\begin{description}
  \item[1)] $X_{0}=M$, and $g_{0}(0)=g_{0}$,
  \item[2)] the flow becomes extinct at a finite time $T=t_{p+1}$,
  \item[3)] for every $0\leq{i}\leq{p}$, the flow $(X_{i},g_{i}(t))_{t\in[t_{i},t_{i+1})}$ satisfies the $\epsilon$-canonical neighborhood assumption with parameter $r_{i}$ and pinching assumption,
  \item[4)] for every $0\leq{i}\leq{p-1}$, $(X_{i+1}, g_{i+1}(t_{i+1}))$ is obtained from $(X_{i},g_{i}(t))_{t\in[t_{i},t_{i+1})}$ by doing surgery at  singular time $t_{i+1}$ with parameters $r_{i},\delta_{i}$.
\end{description}

Let $A_{i}$ be the assertion that the restriction of $g_{i}(t_{i})$ to each component of $X_{_{i}}$ is isotopic to a canonical metric. We will prove the theorem by backward induction on $i$.

First, since the flow becomes extinct at $T$, at a time $t'\in[t_{p},T)$ sufficiently close to $T$, every point is covered by  a canonical neighborhood.  By Proposition \ref{p7.1}, $(X_{p},g_{p}(t'))$ is isotopic to a canonical metric.  By Ricci flow equation,  $g_{p}(t_{p})$ is isotopic to $g_{p}(t').$ Hence $g_{p}(t_{p})$ is isotopic to a canonical metric on $X_i,$  and  $A_{p}$ is proven.

In the following, providing  $A_{i+1}$ is true for some $0\leq{i}\leq{p-1}$,  we will prove that $A_{i}$ is true.  Let's recall how the Ricci flow can be extended across the time $t_{i+1}.$ Denote  $g_{i}(t^{-}_{i+1})=\lim_{t\nearrow t_{i+1}}g_{i}(t),$ then $g_{i}(t^{-}_{i+1})$ is a metric with unbounded curvature on $X_i$.

Note that $X_i$ may contain several compact connected components.  For those components of $X_{i}$ with positive curvature operator at time $t_{i+1},$ we know the metrics $g_{i}(t^{-}_{i+1})$ on these components  are isotopic to spherical metrics. Denote the union of the remaining components of $X_i$ by $\Omega$.
 Let $\rho=\delta_{i}r_{i}$ and $\Omega_{\rho}=\{x\in\Omega|\underset{t\rightarrow{t}_{i+1}}{\lim}R(x,t)\leq{\rho}^{-2}\},$
then every point of $\Omega$ outside $\Omega_{\rho}$ has an $\epsilon$-neck or $\epsilon$-cap neighborhood.
There are finite number of connected components of $\Omega\setminus \Omega_{\rho}$ whose one end is in $\Omega_{\rho}$, another end has unbounded curvature. These components are called $\epsilon$-horns and  we denote them  by $H_{j},1\leq{j}\leq{k}.$ Each of these  components is diffeomorphic to $S^{3}/\Gamma\times(0,1)$ for  $\Gamma\subset{SO(4)}.$
By Proposition 4.2 in \cite{CTZh}, there exists $0<h<\delta_{i}\rho_{i}$ such that a point $x_{j}$ on the $\epsilon$-horn $H_{j}$ with curvature $\geq{h}^{-2}$ is a center of a $\delta_{i}$-neck $N_{j}$.
Denote the center slice of $N_{j}$ by $S_{j}$.  Let $\tilde{\Omega}$ be the union of the connected components of $\Omega \setminus\cup S_j$ with finite curvature at $t_{j+1}$ for the metric $g_{i}(t^{-}_{i+1}),$ $\hat{\Omega}=\Omega\setminus\overline{\tilde{\Omega}}$.
Now we cut off the $\delta_{i}$-neck $N_{j}$ along $S_{j},$ and glue back caps to boundary necks of $\tilde{\Omega}.$  We denote the resulting orbifold by $C_{i+1}.$  Clearly,  $C_{i+1}$ is just $(X_{i+1},g_{i+1}(t_{i+1}))$.

On the other hand, for $t'\in(t_{i},t_{i+1})$ sufficiently close to $t_{i+1}$, the family of metrics  $(1-\mu)g_{i}(t')+\mu{g}_{i+1}(t_{i+1})$ ($\mu\in[0,1]$) on the curvature finite part $\tilde{\Omega}\cup\cup_{j} N_j$ have positive isotropic curvature and has $\delta_{i}$-neck structures on each $N_{j}$.
Gluing surgery caps at the slices $S_{j}$ on this family of metrics, we know that $(X_{i+1},g_{i}(t')_{surg})$ is isotopic to $(X_{i+1}, g_{i+1}(t_{i+1}))$. By induction assumption $A_{i+1}$, on $X_{i+1}$, $g_{i}(t')_{surg}$ is isotopic to a canonical metric.

Moreover, at time $t',$ if we glue back caps to the boundary necks of $\hat{\Omega},$ we get a (possibly disconnected) closed orbifold $(Y_{i+1}, g_{i}(t')_{surg})$. Every point of $Y_{i+1}$  has a canonical neighborhood, by Proposition \ref{p7.1}, on each connected component of $Y_{i+1}$, $g_{i}(t')_{surg}$ is isotopic to a canonical metric.

Finally, by Lemma \ref{l5.5}, if we perform  \textmd{M-W} connected sums at the tips of the surgery caps of $(X_{i+1},g_{i}(t')_{surg})$ and $(Y_{i+1}, g_{i}(t')_{surg})$, the resulting metric on $\Omega$ is isotopic to $g_{i}(t')$. Hence the metric $g_{i}(t')$ on $\Omega$ is isotopic to a canonical metric by Proposition \ref{p4.13}.
This proves $A_{i}$.

Repeating the above procedure, we know that $g_{0}$ is isotopic to a canonical metric on $M$. Since $M$ is itself a manifold, we know there is no subcomponent (in the canonical decomposition) containing singular points. Therefore, every subcomponent is either diffeomorphic to $\mathbb{RP}^{4}$ or $(S^{3}\times\mathbb{R})/G$. The proof is complete.
\end{proof}

\section{Proof of Theorem \ref{t1.1}} \label{s8}

Let $(M,g)$ be a compact orientable four-manifold with positive isotropic curvature. Then by \cite{CTZh}, we know $M$ is diffeomorphic to a connected sum $S^4\# M_1\# M_2\cdots \# M_k$, where $M_i$ is diffeomorphic to $S^3/\Gamma_i\times_{f_i} S^1$.
We will first show that this decomposition is unique in the fundamental group level.

\begin{thm} \label{t8.1}
Suppose $(M,g)$ admits two canonical decompositions $S^4\# M_1\# M_2\cdots \# M_k$ and $S^4\# N_1\# N_2\cdots \# N_l$, then $k=l,$ and there is a permutation $\sigma\in S_k$ such that $\pi_{1}(M_i)\cong \pi_{1}(N_{\sigma_i})$ for all $1\leq i \leq k$.
\end{thm}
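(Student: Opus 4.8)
The plan is to forget the geometry entirely and work at the level of fundamental groups, where the statement becomes the uniqueness of the Grushko--Neumann (Kurosh) decomposition of a finitely generated group as a free product of freely indecomposable factors. So the whole proof reduces to: (i) reading off $\pi_1(M)$ from a canonical decomposition as a free product, (ii) checking that each resulting factor is freely indecomposable, and (iii) quoting the uniqueness theorem.

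\textbf{Step 1: van Kampen.} A canonical decomposition presents $M$ as a connected sum $S^4\# M_1\#\cdots\# M_k$ with $M_i$ diffeomorphic to $S^3/\Gamma_i\times_{f_i}S^1$. Since $\pi_1(S^4)=1$ and removing an open ball from a manifold of dimension $\geq 3$ does not change the fundamental group, van Kampen's theorem gives
\[
\pi_1(M)\;\cong\;\pi_1(M_1)\ast\cdots\ast\pi_1(M_k),
\]
and likewise $\pi_1(M)\cong\pi_1(N_1)\ast\cdots\ast\pi_1(N_l)$ from the second decomposition. Next I would compute each factor from the mapping torus structure: the fibration $S^3/\Gamma_i\to M_i\to S^1$ yields a (split) short exact sequence $1\to\Gamma_i\to\pi_1(M_i)\to\mathbb{Z}\to 1$, so $\pi_1(M_i)\cong\Gamma_i\rtimes\mathbb{Z}$. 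In particular $\pi_1(M_i)\cong\mathbb{Z}$ when $\Gamma_i$ is trivial, and when $\Gamma_i\neq 1$ the group $\pi_1(M_i)$ contains $\Gamma_i$ as a nontrivial finite normal subgroup and is not torsion-free.

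\textbf{Step 2: freely indecomposable factors.} I claim each $\pi_1(M_i)$ is freely indecomposable and nontrivial. Nontriviality is clear since $\pi_1(M_i)$ surjects onto $\mathbb{Z}$. If $\Gamma_i=1$ then $\pi_1(M_i)\cong\mathbb{Z}$, which is freely indecomposable. If $\Gamma_i\neq 1$, suppose $\pi_1(M_i)\cong A\ast B$ with $A,B$ both nontrivial; acting on the Bass--Serre tree of this splitting, the finite subgroup $\Gamma_i$ fixes a vertex and hence lies in a conjugate of $A$ or $B$, so by normality $\Gamma_i$ lies in the normal core of that factor, which is trivial since the other factor is nontrivial --- contradiction. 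The same argument applies to every $\pi_1(N_j)$. Hence both displayed free products express $\pi_1(M)$ as a free product of finitely many nontrivial freely indecomposable groups.

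\textbf{Step 3: conclusion.} By the uniqueness part of the Grushko--Neumann theorem (equivalently, the Kurosh subgroup theorem applied to free factors), the number of factors in such a decomposition and their isomorphism types, counted with multiplicity, are invariants of $\pi_1(M)$. Therefore $k=l$, and there is a permutation $\sigma\in S_k$ with $\pi_1(M_i)\cong\pi_1(N_{\sigma_i})$ for all $i$ (the factors with $\Gamma_i=1$, all isomorphic to $\mathbb{Z}$, are matched among themselves, and their common count on the two sides is forced to agree precisely because the counts of all the other factors do).

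The only genuinely non-formal point is Step 2, the verification that $\pi_1(S^3/\Gamma_i\times_{f_i}S^1)$ with $\Gamma_i\neq 1$ is actually freely indecomposable; this is where the presence of the nontrivial \emph{normal} finite subgroup $\Gamma_i$ (rather than just a finite subgroup) is used. One should also be a little careful that a virtually-$\mathbb{Z}$ group such as $\mathbb{Z}/2\ast\mathbb{Z}/2$ cannot occur as one of the $\pi_1(M_i)$ --- it cannot, since $\pi_1(M_i)$ surjects onto $\mathbb{Z}$ with finite kernel while $D_\infty$ does not. Everything else is a direct application of van Kampen and standard combinatorial group theory.
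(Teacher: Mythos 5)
Your proof is correct and has the same high-level architecture as the paper's: read off free product decompositions of $\pi_1(M)$ via van Kampen, show that each factor $\pi_1(M_i)\cong\Gamma_i\rtimes\mathbb{Z}$ is freely indecomposable, and invoke the Kurosh (Grushko--Neumann) uniqueness theorem, which is the paper's Theorem~\ref{t8.3}. Where you diverge is in the proof of free indecomposability, which is the paper's Lemma~\ref{l8.2}. The paper argues by explicit computation in the deck transformation group acting on $S^3\times\mathbb{R}$: for nontrivial $a\in A$, $b\in B$ the commutator $[a,b]$ is translation-free on the $\mathbb{R}$-factor and so lies in the finite group $\Gamma$, whence some power of it is the identity --- impossible, since $aba^{-1}b^{-1}$ is a nontrivial reduced word in a free product. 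You instead appeal to Bass--Serre theory: the nontrivial finite normal subgroup $\Gamma_i$ fixes a vertex of the tree associated to a hypothetical splitting $A\ast B$, hence sits in a conjugate of one free factor, and by normality in that factor's normal core, which is trivial. Both arguments are valid. The paper's is self-contained and handles $\Gamma_i=1$ and $\Gamma_i\neq 1$ uniformly (when $\Gamma_i$ is trivial the commutator itself already vanishes), whereas yours more cleanly isolates the operative group-theoretic fact (a nontrivial finite normal subgroup forbids a free splitting) at the cost of treating $\Gamma_i=1$ separately. Your closing remarks about $D_\infty$ and the bookkeeping of $\mathbb{Z}$-factors are harmless but already subsumed by the uniqueness theorem once free indecomposability is established.
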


Recall that a group $G$ is called freely indecomposable if $G$ does not admit a nontrivial free product decomposition $G=A\ast{B}$ with $A,B$ proper subgroups of $G$.
\begin{lem} \label{l8.2}

Suppose $M$ is orientable and diffeomorphic to ${S}^{3}/\Gamma\times_{f}S^{1}$, then $\pi_{1}(M)$ is freely indecomposable.

\end{lem}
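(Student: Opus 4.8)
The plan is to read off $\pi_1(M)$ from the mapping-torus structure and then invoke a structural fact about finite-by-$\mathbb{Z}$ groups. By the definition of $S^{3}/\Gamma\times_{f}S^{1}$ in Section \ref{s2.3}, $M$ is the mapping torus of a diffeomorphism $f\colon S^{3}/\Gamma\to S^{3}/\Gamma$, so the bundle projection $M\to S^{1}$ and the homotopy exact sequence of a fibration give
\[
1\longrightarrow\pi_{1}(S^{3}/\Gamma)\longrightarrow\pi_{1}(M)\longrightarrow\pi_{1}(S^{1})=\mathbb{Z}\longrightarrow1 .
\]
Since $S^{3}$ is simply connected and $\Gamma$ acts freely, $\pi_{1}(S^{3}/\Gamma)\cong\Gamma$ is finite, and the sequence splits because $\mathbb{Z}$ is free; hence $\pi_{1}(M)\cong\Gamma\rtimes_{\phi}\mathbb{Z}$ for some $\phi\in\mathrm{Aut}(\Gamma)$. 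In particular the image of $\Gamma$ is a finite normal subgroup of $\pi_{1}(M)$ with infinite cyclic quotient.

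Next I would rule out a decomposition $\pi_{1}(M)=A\ast B$ as a free product with $A\neq1\neq B$. If $\Gamma$ is trivial then $\pi_{1}(M)\cong\mathbb{Z}$, which is infinite cyclic and hence not a nontrivial free product, so we are done. If $\Gamma\neq1$, I would use that the finite subgroup $\Gamma\trianglelefteq A\ast B$ fixes a vertex of the Bass--Serre tree of the splitting (equivalently, by the Kurosh subgroup theorem, $\Gamma$ is conjugate into one of the factors), say $\Gamma\leq gAg^{-1}$; normality of $\Gamma$ then forces $\Gamma=g^{-1}\Gamma g\leq A$, and applying normality once more gives $\Gamma=h\Gamma h^{-1}\leq hAh^{-1}$ for every $h$, i.e. $\Gamma\leq\bigcap_{h}hAh^{-1}$. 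But the normal core of a free factor in a nontrivial free product is trivial: for $1\neq a\in A$ and $1\neq b\in B$ the element $b^{-1}ab$ has reduced syllable length three, so it cannot lie in $A$, whence $A\cap bAb^{-1}=1$. This forces $\Gamma=1$, a contradiction. Therefore $\pi_{1}(M)$ is freely indecomposable.

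A slightly different packaging would be to note that $\pi_{1}(M)\cong\Gamma\rtimes\mathbb{Z}$ is virtually infinite cyclic, hence two-ended, and then quote the classification of two-ended groups: a two-ended group is either finite-by-$\mathbb{Z}$ or finite-by-$D_{\infty}$, and only the latter can split as a nontrivial free product, namely only in the case $\mathbb{Z}_{2}\ast\mathbb{Z}_{2}=D_{\infty}$; since $\pi_{1}(M)$ surjects onto $\mathbb{Z}$ while $D_{\infty}$ has finite abelianization, $\pi_{1}(M)\not\cong D_{\infty}$, so it is freely indecomposable. I expect the only point requiring a little care to be the uniform identification $\pi_{1}(M)\cong\Gamma\rtimes\mathbb{Z}$ across all allowed $\Gamma$ and $f$ (in particular the degenerate case $\Gamma=\{1\}$); once that is in hand the argument is purely group-theoretic and the orientability hypothesis, which is the standing assumption of this section, plays no essential role in it.
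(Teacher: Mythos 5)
Your proof is correct, but it takes a genuinely different route from the paper's. Both arguments begin by identifying $\pi_{1}(M)\cong\Gamma\rtimes\mathbb{Z}$ with $\Gamma$ finite and normal (the paper reads this off the explicit deck transformation action of $G$ on $S^{3}\times\mathbb{R}$; you obtain it from the homotopy exact sequence of the mapping-torus fibration — either is fine). From there the paper gives a short self-contained argument: if $G=A\ast B$ nontrivially, pick $1\neq a\in A$, $1\neq b\in B$ written as isometries $(x,t)\mapsto(h(x),t+2k\pi)$, $(x,t)\mapsto(\tilde h(x),t+2l\pi)$; the $t$-translations cancel in the commutator, so $[a,b]\in\Gamma$ is of finite order, whereas in a free product $[a,b]=aba^{-1}b^{-1}$ has reduced syllable length $4$ and infinite order — a contradiction. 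You instead reduce to a purely group-theoretic fact: a nontrivial finite normal subgroup of a nontrivial free product $A\ast B$ cannot exist, since (by Kurosh, or by its action on the Bass--Serre tree) it is conjugate into one factor, hence by normality lies in $\bigcap_{h}hAh^{-1}$, the normal core of $A$, which is trivial. Both are valid; the paper's commutator argument avoids quoting Kurosh and exploits the concrete isometry realization, while your normal-core argument is more structural and makes no use of the geometric model beyond $\pi_{1}(M)\cong\Gamma\rtimes\mathbb{Z}$. Your two-ended-group packaging is also correct, and you are right to flag the degenerate case $\Gamma=\{1\}$, which the paper's commutator argument also handles (the commutator is then forced to be the identity, still contradicting the free-product normal form).
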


\begin{proof}

Let $G=\pi_{1}(M)$ be the deck transformation group acting isometrically on $S^{3}\times\mathbb{R}.$ Then  $\Gamma=G\bigcap\textmd{Isom}(S^{3}\times\{0\})$ is a finite normal subgroup of $G$, and $G$ is isomorphic to a semidirect product $G=\Gamma\rtimes\mathbb{Z}$.
Suppose $G=A\ast{B}$, where $A$ and $B$ are both proper subgroups of $G$.
We select nontrivial elements $a\in{A}$, $b\in{B}$, and suppose they have  the forms  $a:(x,t)\rightarrow(h(x),t+2k\pi)$ and $b:(x,t)\rightarrow(\tilde{h}(x),t+2l\pi)$, where $h,\tilde{h}\in\textmd{Isom}(S^{3})$, $k,l\in\mathbb{Z}$.
The commutator $[a,b]=aba^{-1}b^{-1}:(x,t)\rightarrow(h\tilde{h}h^{-1}\tilde{h}^{-1}(x),t)$, i.e. $aba^{-1}b^{-1}\in\Gamma$.
But $\Gamma$ is a finite group, then there exist a positive number $n$ such that $(aba^{-1}b^{-1})^{n}=id$, this contradicts the assumption that there is no relation between the elements of $A$ and $B$.
Hence $G$ is freely indecomposable.
\end{proof}

The proof of Theorem \ref{t8.1} relies on the following  Kurosh's  Subgroup Theorem (see \cite{Ohsh}):

\begin{thm}\label{t8.3}
Suppose that a group $G$ admits two free product decompositions $G_{1}\ast\ldots\ast{G}_{k}$, $G'_{1}\ast\ldots\ast{G}'_{l}$, each of whose factors is non-trivial and freely indecomposable. Then these two decompositions are isomorphic in the sense that $k=l$ and there is a permutation $\sigma$ of $\{1,\ldots,k\}$ such that $G_{i}\cong{G}'_{\sigma(i)}$ for all $1\leq i \leq k$.

\end{thm}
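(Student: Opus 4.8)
The plan is to deduce Theorem~\ref{t8.3} from the Kurosh subgroup theorem, together with a normal-form (length) argument in free products and a counting argument via abelianization; alternatively one may simply invoke \cite{Ohsh}.

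First I would regard each $G'_j$ as a subgroup of $G = G_1 \ast \cdots \ast G_k$ and apply the Kurosh subgroup theorem: $G'_j$ is a free product of a free group $F$ with subgroups of the form $w G_i w^{-1} \cap G'_j$. Since $G'_j$ is nontrivial and freely indecomposable, this free product decomposition must be trivial, so either $G'_j$ is free --- hence $G'_j \cong \mathbb{Z}$, as $\mathbb{Z}$ is the only nontrivial freely indecomposable free group --- or $G'_j$ is contained in a single conjugate $w G_i w^{-1}$ of a factor. By the symmetric argument, each $G_i$ is either $\cong \mathbb{Z}$ or contained in a conjugate of some $G'_j$.

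Next I would pair up the factors. Suppose $G_i$ is not infinite cyclic; then $G_i \subset w G'_j w^{-1}$ for some $j$. The copy of $G_i$ inside $G'_j$ is nontrivial and not infinite cyclic, so $G'_j$ is likewise not isomorphic to $\mathbb{Z}$, and therefore $G'_j \subset u G_{i'} u^{-1}$ for some $i'$. Chaining the two inclusions, $G_i$ lies in a conjugate of $G_{i'}$; the standard fact that in a free product a nontrivial element lies in at most one conjugate of a factor (on the Bass--Serre tree, whose edge stabilizers are trivial, a nontrivial element fixes at most one vertex) forces $i'=i$ and then squeezes all the inclusions to equalities up to conjugation. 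Thus $G_i$ is conjugate, in particular isomorphic, to $G'_j$, and the assignment $i \mapsto j$ is a well-defined injection whose inverse is $j \mapsto i$; this yields a bijection, compatible with isomorphism, between the non-infinite-cyclic factors on the two sides. Every remaining factor on either side is isomorphic to $\mathbb{Z}$.

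Finally I would count the $\mathbb{Z}$-factors by abelianization: $G^{\mathrm{ab}} = \bigoplus_i G_i^{\mathrm{ab}} = \bigoplus_j (G'_j)^{\mathrm{ab}}$, the matched non-cyclic factors contribute isomorphic summands on both sides, and each remaining $\mathbb{Z}$-factor contributes a free summand of rank one; comparing free ranks shows that the number of $\mathbb{Z}$-factors is the same on the two sides. Hence $k=l$, and the partial bijection extends (arbitrarily on the $\mathbb{Z}$-factors) to a permutation $\sigma \in S_k$ with $G_i \cong G'_{\sigma(i)}$ for all $i$. I expect the only genuinely delicate point to be the bookkeeping for the infinite cyclic factors: a $\mathbb{Z}$ free factor of one decomposition need not be conjugate to any factor of the other, so one cannot hope for conjugacy of all factors but only the isomorphism asserted, and it is the abelianization (equivalently, Grushko's rank-additivity theorem) rather than the Kurosh theorem that pins down their number.
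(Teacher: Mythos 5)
The paper does not prove this statement; it simply cites it as a known result from Ohshika's book \cite{Ohsh} (and, somewhat misleadingly, labels the statement itself ``Kurosh's Subgroup Theorem,'' when in fact what is quoted is the uniqueness theorem for the decomposition into freely indecomposable factors, for which the actual Kurosh subgroup theorem is a key ingredient). Your proof therefore supplies an argument the paper leaves implicit, and it is the standard and correct one. The first reduction via Kurosh's subgroup theorem, forcing each $G'_j$ to be either infinite cyclic or contained in a single conjugate $wG_iw^{-1}$, is exactly right, as is the use of the Bass--Serre tree (trivial edge stabilizers, so a nontrivial element fixes at most one vertex) to squeeze the chained inclusions $G_i \subset w G'_j w^{-1} \subset wu G_{i'} (wu)^{-1}$ into equalities and to get injectivity of the resulting assignment. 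One small point worth making explicit is that this Bass--Serre argument also gives well-definedness of $i\mapsto j$: the $j$ with $G_i\subset wG'_jw^{-1}$ is unique for the same reason. Your treatment of the $\mathbb{Z}$-factors is the delicate part, and you handle it correctly: conjugacy cannot be expected for them, only cardinality, and the abelianization (or, equivalently, Grushko's rank-additivity theorem applied after cancelling the matched non-cyclic factors) pins down their number. In short, the proposal is a valid proof; it differs from the paper only in that the paper offers none.
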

\begin{proof} [Proof of Theorem \ref{t8.1}] By van Kampen Theorem, $\pi_{1}(M)\cong \pi_{1}(M_1)\ast\cdots \ast \pi_{1}(M_k)\cong \pi_{1}(N_1)\ast\cdots \ast \pi_{1}(N_l).$ The result then follows from Theorem \ref{t8.3} and Lemma \ref{l8.2}.
\end{proof}

Suppose $M$ is diffeomorphic to $S^4\# M_1\# M_2\cdots \# M_k$, where $M_i$ is diffeomorphic to $S^3/\Gamma_i\times_{f_i} S^1$.
When $k=0$, $M$ is diffeomorphic to $S^4$. Since the canonical metric on $S^4$ is the standard round metric, the result is clearly true in this case.
Now we handle the $k=1$ case.

\begin{prop} \label{p8.4} Let $M$ be an orientable four manifold equipped with two canonical metrics $\tilde{g}_i$ and the associated canonical decompositions  $S^4\# M^{i}$ has only one nontrivial piece.   Then there is a diffeomorphism $\varphi$ such that $\tilde{g}_1$ is isotopic to $\varphi^{*}\tilde{g}_2.$
 \end{prop}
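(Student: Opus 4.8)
The plan is to reduce, using the deformation results of Section~\ref{s4}, to a uniqueness statement for standard metrics on $M$, and then to analyse the underlying $\mathbb{S}^{3}\times\mathbb{R}$-geometric structures directly.

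First I would carry out the reduction. Since $M$ is a manifold, the single nontrivial subcomponent $M^{i}$ in the canonical decomposition $S^{4}\#M^{i}$ contains no singular point, so $M^{i}$ is diffeomorphic to some $(S^{3}\times\mathbb{R})/G_{i}$ carrying a standard metric; moreover $M\cong S^{4}\#M^{i}\cong M^{i}$ is orientable, so the $D(\infty)$ case (which is nonorientable) is excluded and $M^{i}$ is a mapping torus $S^{3}/\Gamma_{i}\times_{f_{i}}S^{1}$ with $\Gamma_{i}\subset SO(4)$ and $f_{i}$ orientation preserving. Under the diffeomorphism $M\cong M^{i}$ the canonical metric $\tilde{g}_{i}$ is exactly an \textmd{M-W} connected sum of $(M^{i},h_{std})$ with $(S^{4},h_{round})$, so Proposition~\ref{p4.4} shows $\tilde{g}_{i}$ is isotopic to a standard metric $h_{i}$ on $M$. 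Hence it suffices to produce a diffeomorphism $\varphi$ of $M$ with $\varphi^{*}h_{2}$ isotopic to $h_{1}$.

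Next I would compare the two structures. Each $h_{i}$ presents $M$ as a quotient $(S^{3}\times\mathbb{R},h_{std})/G_{i}$ with $G_{i}=\Gamma_{i}\rtimes\mathbb{Z}\cong\pi_{1}(M)$. The elements $(\gamma,n)$ with $n\neq 0$ have infinite order, so the torsion set of $\Gamma_{i}\rtimes\mathbb{Z}$ is precisely $\Gamma_{i}$; thus $\Gamma_{i}$ is the torsion subgroup of $\pi_{1}(M)$ and, as a subgroup of $\pi_{1}(M)$, is independent of $i$. Consequently the covering of $M$ associated with it is the same in both structures, which yields a diffeomorphism $S^{3}/\Gamma_{1}\times\mathbb{R}\cong S^{3}/\Gamma_{2}\times\mathbb{R}$, hence $S^{3}/\Gamma_{1}\cong S^{3}/\Gamma_{2}$. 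By the rigidity of three-dimensional spherical space forms (cf.~the classification, together with elliptization \cite{Per2}) this is isotopic to an isometry, which lifts to an element of $O(4)$ conjugating $\Gamma_{1}$ to $\Gamma_{2}$; conjugating one structure by the corresponding isometry of $S^{3}\times\mathbb{R}$ I may assume $\Gamma_{1}=\Gamma_{2}=:\Gamma$. Now $M$ fibers over $S^{1}$ with fiber $S^{3}/\Gamma$, and since $H^{1}(M;\mathbb{Z})\cong\mathbb{Z}$ this fibration is unique up to isotopy, so the monodromies $f_{1},f_{2}$ are conjugate in $\pi_{0}\textmd{Diff}(S^{3}/\Gamma)$, which by \cite{Mcc} coincides with $\pi_{0}\textmd{Isom}(S^{3}/\Gamma)$. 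Lifting a conjugating isometry to $N_{O(4)}(\Gamma)$ and using it to conjugate $G_{1}$, I reduce to the case in which $G_{1}$ and $G_{2}$ differ only in the translation length of their $\mathbb{Z}$ factors.

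Finally I would remove this last discrepancy by an explicit isotopy: on $S^{3}/\Gamma\times\mathbb{R}$ the metrics $d\theta^{2}+c^{2}dr^{2}$, $c>0$, are invariant under the relevant group and locally conformally flat with positive scalar curvature, hence PIC, and descend to a continuous path of PIC metrics on $M$ joining the standard metrics of any two circle lengths. Assembling this path together with the isometry produced above and the isotopies $\tilde{g}_{i}\sim h_{i}$ gives the desired $\varphi$. I expect the middle step to be the main obstacle: one must show that the $\pi_{1}$-isomorphism coming from the ambient diffeomorphism is genuinely realised by the geometry, which requires combining uniqueness of the $S^{1}$-fibration, McCullough's description of $\pi_{0}\textmd{Diff}(S^{3}/\Gamma)$, and the rigidity of spherical space forms; and the translation length, not being a diffeomorphism invariant, forces the use of an honest isotopy rather than a diffeomorphism alone.
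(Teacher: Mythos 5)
Your proposal takes a genuinely different route from the paper: instead of invoking Ue's theorem (Proposition~\ref{t8.5}, cited in the paper's proof), you attempt to derive the required weak bundle equivalence of the two fibrations directly from covering space theory and the group structure of $\pi_{1}(M)$. The beginning (reduction to standard metrics via Proposition~\ref{p4.4}) and the end (the explicit isometries/isotopies matching conjugate monodromies and translation lengths) are sound, and your explicit treatment of the translation length via the path $d\theta^{2}+c^{2}dr^{2}$ is a nice addition that the paper's $\iota,\chi$ construction only implicitly addresses by fixing the length to $2\pi$. The identification of $\Gamma_{i}$ as the torsion subgroup of $\pi_{1}(M)=\Gamma_{i}\rtimes\mathbb{Z}$ is also correct.

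However, the middle step contains two genuine gaps, both of which are exactly what Ue's theorem packages away. First, from the diffeomorphism of covering spaces $S^{3}/\Gamma_{1}\times\mathbb{R}\cong S^{3}/\Gamma_{2}\times\mathbb{R}$ you conclude ``hence $S^{3}/\Gamma_{1}\cong S^{3}/\Gamma_{2}$'' with no argument. This is not immediate: for closed manifolds $F_{1},F_{2}$, a diffeomorphism $F_{1}\times\mathbb{R}\cong F_{2}\times\mathbb{R}$ only makes $F_{1},F_{2}$ h-cobordant in general, and for lens spaces (which have nontrivial Whitehead groups and nontrivial homotopy equivalences) this is a serious issue; you would need something like Turaev's theorem that h-cobordant closed $3$-manifolds are homeomorphic, which relies on geometrization and is certainly not covered by ``the rigidity of three-dimensional spherical space forms.'' Second, the assertion ``since $H^{1}(M;\mathbb{Z})\cong\mathbb{Z}$ this fibration is unique up to isotopy'' is not valid: $H^{1}(M;\mathbb{Z})\cong\mathbb{Z}$ only gives uniqueness of the homotopy class of the projection $M\to S^{1}$, and passing from homotopy uniqueness to the statement that the two bundle structures are carried onto one another by an ambient diffeomorphism is precisely the hard content of Ue's Proposition~8. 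You acknowledge this step as ``the main obstacle'' but then assert the needed uniqueness rather than proving it. The paper circumvents both issues at once by citing \cite{Ue}: any diffeomorphism of the total spaces of two $S^{3}/\Gamma$-bundles over $S^{1}$ is already a weak bundle isomorphism, which directly gives that $\Gamma_{1}$ and $\Gamma_{2}$ are conjugate and that $f_{1}$ is isotopic to $\beta f_{2}^{\pm 1}\beta^{-1}$; this should be the key citation rather than a step you try to reprove ad hoc.
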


Before the proof of  Proposition \ref{p8.4}, we need some preliminary results on fiber bundles.  For two fiber bundles $\pi_{1}:E_{1}\rightarrow{B_{1}}$ and $\pi_{2}:E_{2}\rightarrow{B_{2}}$ with the same fiber $F$,  we call $E_{1}$ and $E_{2}$ are weakly equivalent if there exists a differmorphism $\phi:B_{1}\rightarrow{B}_{2}$ and a bundle map $\Phi:E_{1}\rightarrow{E_{2}}$ such that the following diagram commutes:

\[
\xymatrix{
  E_{1} \ar[d]_{\pi_{1}} \ar[r]^{\Phi} & E_{2} \ar[d]^{\pi_{2}} \\
  B_{1} \ar[r]^{\phi} & B_{2}   }
\]
The above $\Phi$ is called a weak bundle isomorphism. Roughly speaking, a weak bundle isomorphism is a diffeomorphism of the total space which preserves the fibers. In the proof of of Proposition \ref{p8.4} we need the following theorem:

\begin{prop}[Proposition 8 of \cite{Ue}, see also \cite{Kato} and \cite{Ohba}] \label{t8.5}

Let $M_{i}=F_{i}\times_{f_{i}}{S}^{1},$ $i=1,2,$ be two bundles over $S^{1}$  with fibers $F_{i}$ and  monodromy $f_{i}$, where $F_{i}$ are spherical $3$-manifolds. Then there is a weak bundle isomorphism between $M_1$ and $M_2,$ if one of the following two conditions holds: \\
\text{}\ \ \ i) $M_{1}$ is diffeomorphic to $M_{2}$ as differential manifolds,\\
\text{}\ \ \ ii)  $\pi_{1}(M_{1})\cong\pi_{1}(M_{2})$ and  $F_{1}$ is diffeomorphic to $F_{2}$.

\end{prop}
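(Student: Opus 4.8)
The plan is to realize the canonical $S^3\times\mathbb{E}^1$ geometric structure on each $M_i$, to observe that the mapping-torus fibration is essentially \emph{intrinsic} to that structure, and then to reduce the existence of a weak bundle isomorphism to an algebraic comparison of the fundamental groups together with an analysis of the mapping class groups of spherical space forms. First I would normalize: by the result that every self-diffeomorphism of $S^3/\Gamma$ is isotopic to an isometry (cited here as \cite{Mcc}), each monodromy $f_i$ may be taken to be an isometry of $F_i$ without changing the diffeomorphism type of $M_i$; then $M_i=(S^3\times\mathbb{R})/G_i$ with $G_i=\pi_1(M_i)$ acting freely, isometrically and cocompactly, and the mapping-torus projection $p_i\colon M_i\to S^1$ is covered by the product projection $S^3\times\mathbb{R}\to\mathbb{R}$.

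Next I would pin down the fibration group-theoretically. The subgroup $\pi_1(F_i)\subseteq G_i$ is the kernel of $p_{i\ast}\colon G_i\to\mathbb{Z}$; it is finite and normal, and any finite normal subgroup properly containing it would inject into the torsion-free group $G_i/\pi_1(F_i)\cong\mathbb{Z}$, so $\pi_1(F_i)$ is the \emph{unique maximal finite normal subgroup} $N_i\trianglelefteq G_i$. Hence both $N_i$ and the quotient $G_i/N_i\cong\mathbb{Z}$ are determined by $G_i$ alone, and the extension $1\to N_i\to G_i\to\mathbb{Z}\to1$ is classified by the conjugacy class, up to inversion, of the outer automorphism $[f_{i\ast}]\in\mathrm{Out}(N_i)$. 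Any abstract isomorphism $\phi\colon G_1\to G_2$ therefore carries $N_1$ onto $N_2$, covers an isomorphism $\mathbb{Z}\to\mathbb{Z}$, and conjugates $[f_{1\ast}]$ to $[f_{2\ast}]^{\pm1}$ inside $\mathrm{Out}$.

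For case (ii) I would then argue as follows. Since $F_1$ is diffeomorphic to $F_2$, set $F=F_1\cong F_2$ and fix such a diffeomorphism; using it together with the known structure of $\pi_0\mathrm{Diff}(F)$ one transports the data so that $f_{1\ast}$ and $f_{2\ast}^{\pm1}$ become conjugate in $\mathrm{Out}(\pi_1F)$. The crux is the lifting lemma: for a spherical space form $F$ the natural homomorphism $c\colon\pi_0\mathrm{Diff}(F)\to\mathrm{Out}(\pi_1F)$ is understood well enough (again using $\pi_0\mathrm{Diff}(F)\cong\pi_0\mathrm{Isom}(F)$ and the explicit isometry groups of lens, prism, and platonic space forms) that a conjugacy-up-to-inversion between two elements of its image can always be realized by a conjugacy-up-to-inversion of their preimages. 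Granting this, $[f_1]$ and $[f_2]^{\pm1}$ are conjugate in $\pi_0\mathrm{Diff}(F)$, and conjugating the monodromy (and reflecting the base circle when the sign is $-1$) produces the desired weak bundle isomorphism. For case (i), a diffeomorphism $M_1\to M_2$ in particular gives $\pi_1(M_1)\cong\pi_1(M_2)$, so it suffices to deduce $F_1\cong F_2$ and invoke case (ii); this follows from showing that a spherical-fiber fibration of $M_i$ over $S^1$ is unique up to isotopy — one isotopes a $\pi_1$-injective spherical $3$-submanifold carrying $N_i$ to a horizontal slice $S^3/\Gamma\times\{\mathrm{pt}\}$ of the geometric model — which also shows directly that any diffeomorphism between the $M_i$ is isotopic to a fiber-preserving one.

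The step I expect to be the main obstacle is the lifting lemma for $c\colon\pi_0\mathrm{Diff}(F)\to\mathrm{Out}(\pi_1F)$ in case (ii). This map need not be surjective (only automorphisms compatible with the linking form, up to sign, are geometrically realizable) and need not be injective (for some spherical manifolds there is a fiber-reversing isometry of the underlying Seifert structure acting trivially on $\pi_1$), so establishing that conjugacy in the image lifts to conjugacy of preimages requires a genuine case-by-case inspection of all families of spherical space forms — precisely the work carried out in \cite{Ue}, \cite{Kato} and \cite{Ohba}. By comparison, the normalization via \cite{Mcc}, the identification of $N_i$ as the maximal finite normal subgroup, and the uniqueness-of-fiber argument in case (i) are comparatively routine once the $S^3\times\mathbb{E}^1$ geometric structure is in hand.
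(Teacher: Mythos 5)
The paper offers no proof of this proposition at all: it is quoted verbatim as Proposition 8 of \cite{Ue}, with \cite{Kato} and \cite{Ohba} as supporting references, so there is no in-paper argument to measure yours against. Your outline is structurally sound and matches how the result is actually established in those references: normalize the monodromy to an isometry via \cite{Mcc}, observe that $\pi_1(F_i)$ is the torsion subgroup of $\pi_1(M_i)$ and hence canonically determined, translate ``weak bundle isomorphism'' into ``$f_1$ or $f_1^{-1}$ is isotopic to $\beta f_2\beta^{-1}$'' (the \cite{Steen} criterion that the paper itself invokes in the proof of Proposition \ref{p8.4}), and reduce everything to comparing conjugacy in $\pi_0\mathrm{Diff}(F)$ with conjugacy in $\mathrm{Out}(\pi_1F)$.

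The caveat is that the two steps you defer are not auxiliary lemmas; they are the entire content of the proposition. The lifting statement for $c\colon\pi_0\mathrm{Diff}(F)\to\mathrm{Out}(\pi_1F)$ is precisely Ue's case-by-case analysis, and your own remark that $c$ need not be injective shows how delicate it is: for $F=S^3$ the kernel of $c$ is generated by an orientation-reversing diffeomorphism, and $S^3\times S^1$ versus the twisted $S^3$-bundle over $S^1$ shows that conjugacy in the image of $c$ does not lift unless orientations are tracked (consistently with the fact that the present paper only ever applies the proposition to orientable $M$ with orientation-preserving monodromy). Similarly, the case-(i) step you describe as ``comparatively routine'' --- isotoping a $\pi_1$-injective spherical $3$-submanifold of a $4$-manifold onto a fiber --- has no soft justification in dimension four (there is no analogue of the Waldhausen--Stallings machinery), is exactly the rigidity content of \cite{Kato} and \cite{Ue}, and cannot be replaced by a fundamental-group computation because non-diffeomorphic lens spaces can have isomorphic fundamental groups. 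So what you have is a correct and informative reduction of the proposition to the cited literature --- which is, in effect, also what the paper does by quoting it --- rather than an independent proof.
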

\begin{proof} [Proof of Proposition \ref{p8.4}]

A diffeomorphism $\varphi:M\rightarrow{S}^{3}/\Gamma\times_{f}S^{1}$ with $f\in\textmd{Isom}(S^{3}/\Gamma)$ induced a bundle structure with base $S^{1}$, fiber $F={S}^{3}/\Gamma$ and monodromy $f$.
On the other hand, by Proposition \ref{t8.5}, if there are two diffeomorphisms $\varphi_{1}:M\rightarrow{S}^{3}/\Gamma_{1}\times_{f_{1}}S^{1}$ and $\varphi_{2}:M\rightarrow{S}^{3}/\Gamma_{2}\times_{f_{2}}S^{1}$,
then we have a weak bundle isomorphism between ${S}^{3}/\Gamma_{1}\times_{f_{1}}S^{1}$ and ${S}^{3}/\Gamma_{2}\times_{f_{2}}S^{1}$.
By the theory of fiber bundles, one can prove that the fiber bundles $S^{3}/\Gamma\times_{f_{1}}S^{1}$ and $S^{3}/\Gamma\times_{f_{2}}S^{1}$ are weakly equivalent if and only if there exists a $\beta\in\textmd{Diff}(S^{3}/\Gamma)$ such that $f_{1}$ (or $f_{1}^{-1}$)  is isotopic to  $\beta{f}_{2}\beta^{-1}$ in $\textmd{Diff}(S^{3}/\Gamma)$  (see Section 18 of \cite{Steen}).
Since any element in  $\textmd{Diff}(S^{3}/\Gamma)$ is isotopic to an element in $\textmd{Isom}(S^{3}/\Gamma).$ We can assume the above $f_{1},f_{2},\beta\in\textmd{Isom}(S^{3}/\Gamma)$.
For $f\in\textmd{Isom}(S^{3}/\Gamma)$, let $G_{f}$ be the isometric subgroup of $(S^{3}/\Gamma)\times\mathbb{R}$ generated by $\rho:(\theta,s)\rightarrow(f(\theta),s+2\pi).$ Let $S^{3}/\Gamma\times_{f}S^{1}$ is equipped with the quotient metric $g_{f}$ induced from $(S^{3}/\Gamma)\times\mathbb{R}$.
For $f,\beta\in\textmd{Isom}(S^{3}/\Gamma)$, we define a map $\iota:S^{3}/\Gamma\times_{f}S^{1}\rightarrow{S}^{3}/\Gamma\times_{\beta{f}\beta^{-1}}S^{1}$ by $\iota(\theta,s)=(\beta(\theta),s)$,
where $(\theta,s)$ is the coordinate of $S^{3}/\Gamma\times_{f}S^{1}$ and $S^{3}/\Gamma\times_{\beta{f}\beta^{-1}}S^{1}$ induced from $(S^{3}/\Gamma)\times\mathbb{R}$ respectively.
It is easy to verify that $\iota$ is well defined, and $\iota^{\ast}(g_{\beta{f}\beta^{-1}})=g_{f}$.
Similarly, the map $\chi:S^{3}/\Gamma\times_{f}S^{1}\rightarrow{S}^{3}/\Gamma\times_{\beta{f}^{-1}\beta^{-1}}S^{1}$ defined  by $\chi(\theta,s)=(\beta(\theta),-s)$ also satisfies  $\chi^{\ast}(g_{\beta{f^{-1}}\beta^{-1}})=g_{f}$.
\end{proof}

\begin{cor} \label{c8.6} Let $M$ be an orientable manifold diffeomorphic to $S^3/\Gamma\times_{f} S^1$. Then  $\textmd{PIC}(M)/\textmd{Diff}(M)$ is path-connected.
\end{cor}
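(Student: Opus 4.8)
The plan is to combine Theorem~\ref{t1.2} with Proposition~\ref{p8.4}. Given two metrics $g_1,g_2\in\textmd{PIC}(M)$, Theorem~\ref{t1.2} produces isotopies (through PIC metrics) from $g_1$ and $g_2$ to canonical metrics $\tilde g_1$ and $\tilde g_2$ on $M$, each coming with its own canonical decomposition $S^4\#M^1$, $S^4\#M^2$. First I would check that each of these decompositions has exactly one nontrivial piece and that this piece is again of the form $S^3/\Gamma'\times_{f'}S^1$. Since $M$ is orientable and a non-orientable summand would make the connected sum non-orientable, no subcomponent can be $\mathbb{RP}^4$; hence by the classification of subcomponents recalled after Theorem~\ref{t1.2}, every nontrivial subcomponent is an orientable $(S^3\times\mathbb{R})/G$, which by the discussion in Section~\ref{s3} (the case $p_2(G/\Gamma)=\mathbb{Z}$) is diffeomorphic to a mapping torus $S^3/\Gamma'\times_{f'}S^1$. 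By van Kampen's theorem, $\pi_1(M)$ is the free product of the fundamental groups of the nontrivial subcomponents, each of which is nontrivial (it surjects onto $\mathbb{Z}$); on the other hand $\pi_1(M)\cong\pi_1(S^3/\Gamma\times_f S^1)$ is nontrivial and freely indecomposable by Lemma~\ref{l8.2}. Thus the Kurosh Subgroup Theorem~\ref{t8.3} forces the free product to have a single factor, so each canonical decomposition of $M$ has exactly one nontrivial piece, as required by the hypothesis of Proposition~\ref{p8.4}.

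Second, I would apply Proposition~\ref{p8.4} to the two canonical metrics $\tilde g_1,\tilde g_2$ on $M$: it yields a diffeomorphism $\varphi\in\textmd{Diff}(M)$ such that $\tilde g_1$ is isotopic to $\varphi^{*}\tilde g_2$. Note that nothing extra needs to be verified here: internally Proposition~\ref{p8.4} invokes Proposition~\ref{t8.5}, and since the two total spaces $S^3/\Gamma_i\times_{f_i}S^1$ occurring in the two canonical decompositions are both diffeomorphic to $M$, hence to each other, hypothesis (i) of Proposition~\ref{t8.5} holds automatically.

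Finally, I would concatenate the isotopies. The isotopy $g_2\simeq\tilde g_2$ pulls back under $\varphi$ to an isotopy $\varphi^{*}g_2\simeq\varphi^{*}\tilde g_2$ through PIC metrics, because $\varphi^{*}$ preserves positive isotropic curvature. Chaining $g_1\simeq\tilde g_1\simeq\varphi^{*}\tilde g_2\simeq\varphi^{*}g_2$ exhibits, for arbitrary $g_1,g_2\in\textmd{PIC}(M)$, a diffeomorphism $\varphi$ with $g_1$ isotopic to $\varphi^{*}g_2$, which is precisely the formulation of path-connectedness of $\textmd{PIC}(M)/\textmd{Diff}(M)$ given in the introduction. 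The one step that needs care — and which I regard as the main point of the corollary — is the reduction in the first paragraph: using orientability together with Lemma~\ref{l8.2} and Theorem~\ref{t8.3} to force the canonical decomposition of $M$ to consist of the principal $S^4$ plus a \emph{single} spherical–space–form mapping torus, so that Proposition~\ref{p8.4} genuinely applies. The remaining steps are just bookkeeping of PIC isotopies.
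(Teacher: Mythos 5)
Your proof is correct and follows the route the paper clearly intends (the paper states Corollary~\ref{c8.6} without a written proof, presenting it as an immediate consequence of Proposition~\ref{p8.4} together with Theorem~\ref{t1.2}). Your reduction to a single nontrivial summand—ruling out non-orientable subcomponents by orientability of $M$, then applying Lemma~\ref{l8.2} (free indecomposability of $\pi_1(S^3/\Gamma\times_f S^1)$) together with Theorem~\ref{t8.3}—is exactly the content of the paper's Theorem~\ref{t8.1} specialized to this case, so there is no genuine difference in approach; the concatenation of PIC isotopies is the standard bookkeeping. One very minor remark: once you know $\pi_1(M)$ is nontrivial and freely indecomposable, the fact that a van Kampen free-product decomposition into nontrivial factors must be a single factor follows directly from the definition of free indecomposability, without needing the full Kurosh statement; but invoking Theorem~\ref{t8.3} as you do (comparing with the trivial one-factor decomposition) is also fine.
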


When $k\geq2$, we have the following theorem:



\begin{thm}\label{t8.7}
Let $M$ be a manifold diffeomorphic to a finite connected sum of $S^{3}/\Gamma_{i}\times S^{1}$, $1\leq{i}\leq{k}$, where $\Gamma_{i}$ is either the trivial group or a non-cyclic discrete isometric group of $S^{3}$. Then $\textmd{PIC}(M)/\textmd{Diff}(M)$ is path-connected.
\end{thm}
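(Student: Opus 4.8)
The plan is to run the argument behind Corollary~\ref{c8.6} simultaneously on all of the connected summands. By Theorem~\ref{t1.2} every metric in $\textmd{PIC}(M)$ is isotopic to a canonical metric, so it suffices to prove that any two canonical metrics $g,g'$ on $M$ become isotopic after pulling one of them back by a diffeomorphism of $M$. Fix canonical decompositions $S^{4}\# P_{1}\#\cdots\# P_{m}$ of $(M,g)$ and $S^{4}\# P'_{1}\#\cdots\# P'_{m'}$ of $(M,g')$. Since $M$ is a closed manifold, the last paragraph of the proof of Theorem~\ref{t1.2} shows that every subcomponent is a closed manifold of the form $(S^{3}\times\mathbb{R})/G$; since $M$ is orientable, being a connected sum of the orientable manifolds $S^{3}/\Gamma_{i}\times S^{1}$, each such subcomponent is orientable, hence a mapping torus $S^{3}/F\times_{f}S^{1}$ with $f$ an orientation-preserving isometry of the spherical space form $S^{3}/F$. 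Because $M$ is also diffeomorphic to $S^{4}\#(S^{3}/\Gamma_{1}\times S^{1})\#\cdots\#(S^{3}/\Gamma_{k}\times S^{1})$, which carries an evident canonical metric inducing that decomposition, Theorem~\ref{t8.1} (van Kampen together with Lemma~\ref{l8.2} and Theorem~\ref{t8.3}) yields $m=m'=k$ and, after a reindexing of the summands, $\pi_{1}(P_{j})\cong\pi_{1}(P'_{j})\cong\Gamma_{j}\times\mathbb{Z}$ for every $j$.

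Next I would pin down the summands up to diffeomorphism, which is where the non-cyclicity hypothesis is used. In any semidirect product $F\rtimes\mathbb{Z}$ with $F$ finite, the set of elements of finite order is precisely the normal subgroup $F$, so $F$ is intrinsic to the isomorphism type of the group; comparing with $\Gamma_{j}\times\mathbb{Z}$ shows that the fiber groups of $P_{j}$ and of $P'_{j}$ are both isomorphic to $\Gamma_{j}$, in particular trivial or non-cyclic. A spherical $3$-manifold with trivial or non-cyclic fundamental group is determined up to diffeomorphism by its fundamental group, so the fibers of $P_{j}$ and $P'_{j}$ are diffeomorphic to $S^{3}/\Gamma_{j}$. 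Now Proposition~\ref{t8.5}(ii), applied to $P_{j}$ (resp. $P'_{j}$) and the product bundle $S^{3}/\Gamma_{j}\times S^{1}$, gives a weak bundle isomorphism, since the fibers are diffeomorphic and the fundamental groups are isomorphic; in particular $P_{j}\cong P'_{j}\cong S^{3}/\Gamma_{j}\times S^{1}$.

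With the summands identified, I would build the global diffeomorphism by the method of Proposition~\ref{p8.4}, carried out on each summand. First, using Proposition~\ref{orbconnsum} one slides the $2k$ connected-sum points occurring in the two decompositions to a fixed standard configuration on the principal sphere $(S^{4},h_{round})$, producing isotopies of $g$ and of $g'$. Secondly, for each $j$ the standard metric on $P_{j}\cong S^{3}/\Gamma_{j}\times_{f_{j}}S^{1}$ is isotopic, after a diffeomorphism of $P_{j}$, to a fixed standard product metric $h^{(j)}_{std}$ on $S^{3}/\Gamma_{j}\times S^{1}$; this is exactly Corollary~\ref{c8.6}, obtained from the weak bundle isomorphism above together with the explicit maps $\iota,\chi$ from the proof of Proposition~\ref{p8.4}, Steenrod's description of weakly equivalent bundles over $S^{1}$, rescaling of the circle factor, and Propositions~\ref{p4.4}, \ref{p4.8} and~\ref{p4.10}. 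Thirdly, the reindexing permutation of the summands is realized by a diffeomorphism of the connected sum $M$. These per-summand diffeomorphisms and the permutation assemble into a diffeomorphism $\varphi$ of $M$, and the per-summand isotopies, together with Proposition~\ref{orbconnsum} and Remark~\ref{remark2.2}, assemble (via Proposition~\ref{p4.13}) into an isotopy; the upshot is that both $g$ and $\varphi^{*}g'$ are isotopic to one and the same reference canonical metric, namely the M-W connected sum of $(S^{4},h_{round})$ with the fixed pieces $(S^{3}/\Gamma_{j}\times S^{1},h^{(j)}_{std})$ in the fixed configuration. Hence $g$ is isotopic to $\varphi^{*}g'$, as desired.

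The hard part will be the bookkeeping in this last assembly step: one must choose all of the per-summand diffeomorphisms and isotopies so that they are the identity, respectively constant, on neighborhoods of the necks along which the M-W connected sums are performed --- where the relevant metrics are cylindrical --- so that they extend across the gluing regions and patch together on $M$; and one must verify that reordering connected summands and the $k=1$ normal form of Corollary~\ref{c8.6} are compatible with the M-W construction. A secondary point requiring care is the diffeomorphic rigidity of spherical $3$-manifolds with non-cyclic (or trivial) fundamental group --- for non-trivial cyclic groups this already fails for lens spaces, which is precisely why the hypothesis excludes them --- since it is this rigidity, through Proposition~\ref{t8.5}(ii), that lets us identify the summands themselves rather than merely their fundamental groups.
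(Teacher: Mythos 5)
Your argument follows essentially the same route as the paper's proof: reduce to canonical metrics by Theorem~\ref{t1.2}, use Theorem~\ref{t8.1} (Kurosh plus Lemma~\ref{l8.2}) to match summands up to fundamental group, use the diffeomorphism rigidity of spherical $3$-manifolds with trivial or non-cyclic $\pi_1$ together with Proposition~\ref{t8.5}(ii) and Proposition~\ref{p8.4} to identify each $P_j$ with $S^3/\Gamma_j\times S^1$ and bring its standard metric to the product form, and finally assemble per-summand diffeomorphisms into a global one via Proposition~\ref{orbconnsum}, paying attention to orientations through the map $\chi$. One small clarification you add, and the paper leaves implicit, is the observation that the torsion subgroup of $F\rtimes\mathbb{Z}$ is exactly $F$, which is how one knows the fiber group of each summand must coincide with $\Gamma_j$ before invoking McCullough's rigidity; this is correct and tightens the exposition, but it does not constitute a different approach.
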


\begin{proof}
As before, let $g_1,g_2\in\textmd{PIC}(M)$, then by Theorem \ref{t1.2}, there are two canonical metrics $\tilde{g}_1,\tilde{g}_2\in\textmd{PIC}(M)$ such that $g_i$ is isotopic to $\tilde{g}_{i}$ for $i=1,2$.
Let $S^4\# M_1\# M_2\cdots \# M_k,$  and $S^4\# N_1\# N_2\cdots \# N_k,$ be the associated canonical decompositions respectively. Denote by $(M_j,h_j,\sigma_j)$ the standard metric $h_j$ and orientation $\sigma_j$ on $M_j$ inherited from $M$ in the above decompositions. Denote the corresponding quantity on $N_j$ by $(N_j,h_j',\sigma_j')$.
From Theorem \ref{t8.3}, we may assume $\pi_{1}(M_j)=\pi_1(N_j)=\Gamma_{j}\times\mathbb{Z}$ for $1\leq{j}\leq{k}$.
Note that if two spherical 3-manifolds $F_{1},F_{2}$ satisfy that $\pi_{1}(F_{1})=\pi_{1}(F_{2})$ are trivial or non-cyclic, then $F_{1},F_{2}$ are diffeomorphic to each other (see \cite{Mcc}).
Combining this fact with ii) in Proposition \ref{t8.5} and Proposition \ref{p8.4}, both $M_j$ and $N_j$ are diffeomorphic to $S^{3}/\Gamma_{j}\times S^{1}$, and the standard metrics on them are isotopic to the standard product metrics (hence we may assume $h_j$ and $h_j'$ are product metrics).
Note that there is an orientation reversing isometry of the standard product metric on $S^3/\Gamma\times S^1$ defined by $\chi:S^3/\Gamma\times S^1\rightarrow S^3/\Gamma\times S^1,\chi(\theta,s)=(\theta,-s)$.
So there always is an orientation preserving diffeomorphism $\varphi_j$ from $(M_j,\sigma_j)$ to $(N_j,\sigma_j')$ such that $\varphi_j^{*}h_j'$ is isotopic to $h_j$.
From Proposition \ref{orbconnsum}, during making \textmd{M-W} connected sums, these $\varphi_j$'s may be glued together to give a global diffeomorphism $\varphi$ from $M$ to itself, such that $\varphi^{*}\tilde{g}_2$ is isotopic to $\tilde{g}_1$. From this, we know $g_1$ is isotopic to $\varphi^{*}{g}_2$. The proof is complete.
\end{proof}

Theorem \ref{t1.1} follows from Theorem \ref{t8.7}, Corollary \ref{c8.6} and Proposition \ref{p4.6}.

\begin{rem}
In dimension three, any compact orientable manifold $M^3$ admits a prime decomposition $M^3=M_1\#\cdots \# M_k,$ moreover, this decomposition is unique up to order and homeomorphism. The existence part is due to Kneser \cite{Kn}, and the uniqueness part is due to Milnor \cite{Mil}.
When dimension equals to four, the uniqueness of such a  decomposition is not true in general, for instance, Hirzebruch proved (see \cite{Mil}) that $(S^2\times S^2)\#\overline{\mathbb{CP}}^2$ is diffeomorphic to $\mathbb{CP}^2\#\overline{\mathbb{CP}}^2\#\overline{\mathbb{CP}}^2$.
This is a big difference between dimension 3 and 4. For the sake of completely solving the isotopy problem of PIC metrics, the technique employed in this paper relies on the uniqueness of the canonical decompositions for manifolds with PIC metrics.
\end{rem}

\end{document}